\renewcommand\subsection
\setlist[itemize]{leftmargin=1em}
\patchcmd{\@setaddresses}{\indent}{\noindent}{}{}
\patchcmd{\@setaddresses}{\indent}{\noindent}{}{}
\patchcmd{\@setaddresses}{\indent}{\noindent}{}{}
\patchcmd{\@setaddresses}{\indent}{\noindent}{}{}
\numberwithin{equation}{section}
\theoremstyle{plain}
\newtheorem{fact}[equation]{Fact}
\newtheorem{thm}[equation]{Theorem}
\newtheorem{lem}[equation]{Lemma}
\newtheorem*{prop*}{Proposition}
\newtheorem*{thm*}{Theorem}
\newtheorem*{lem*}{Lemma}
\newtheorem*{cor*}{Corollary}
\theoremstyle{definition}
\newtheorem{defn}[equation]{Definition}
\newtheorem*{defn*}{Definition}
\newtheorem*{question*}{Question}
\theoremstyle{remark}
\newtheorem{remark}[equation]{Remark}
\newtheorem*{remark*}{Remark}
\newtheorem*{example*}{Example}
\newtheorem*{notation*}{Notation}
\newtheorem*{terminology*}{Terminology}
\NewDocumentCommand\anbr{smo}{\IfBooleanTF{#1}{\IfNoValueTF{#3}{\left\langle#2\right\rangle}{\left\langle#2\colon\,#3\right\rangle}}{\IfNoValueTF{#3}{\left\langle#2\right\rangle}{\left\langle#2\right\rangle_{#3}}}} 
\newcommand{\bijto}{\to[bij]}
\NewDocumentCommand\tuple{m}{\langle #1\rangle}
\NewDocumentCommand\cubr{smo}{\IfBooleanTF{#1}{\IfNoValueTF{#3}{\left\{#2\right\}}{\left\{#2\colon\,#3\right\}}}{\IfNoValueTF{#3}{\left\{#2\right\}}{\left\{#2\right\}_{#3}}}} 
\newcommand{\intps}{\mathrel{\rhd}}
\newcommand{\intpd}{\mathrel{\lhd}}
\newcommand{\isoto}{\to[iso]}
\newcommand{\leqs}{\mathrel{\leqslant}}
\newcommand{\ls}{\mathrel{<}}
\let\models\relax
\newcommand{\models}{\mathrel{\vDash}}
\NewDocumentCommand\mt{m}{\mathtt{#1}}
\newcommand{\proves}{\mathrel{\vdash}}
\NewDocumentCommand\restr{m}{\hspace{-0.3em}\restriction_{#1}}
\NewDocumentCommand\robr{smo}{\IfBooleanTF{#1}{\IfNoValueTF{#3}{\left(#2\right)}{\left(#2\colon\,#3\right)}}{\IfNoValueTF{#3}{\left(#2\right)}{\left(#2\right)_{#3}}}} 
\NewDocumentCommand\sqbr{smo}{\IfBooleanTF{#1}{\IfNoValueTF{#3}{\left[#2\right]}{\left[#2\colon\,#3\right]}}{\IfNoValueTF{#3}{\left[#2\right]}{\left[#2\right]_{#3}}}} 
\newcommand{\strongs}{\mathrel{\ooalign{$\vdash$\cr$\vDash$}}}
\let\to\relax
\NewDocumentCommand\to{o}{\mathrel{\IfNoValueTF{#1}{\xrightarrow{}}{\xrightarrow{\scriptscriptstyle\mathrm{#1}}}}}
\NewDocumentCommand\tup{m}{\overline{#1}}
\newcommand{\AC}{\mathrm{AC}}
\newcommand{\ACA}{\mathrm{ACA}}
\newcommand{\ACAz}{\mathrm{ACA}_{0}}
\newcommand{\AS}{\mathrm{AS}}
\NewDocumentCommand\BS{m}{\mathrm{B\Sigma}_{#1}}
\newcommand{\Com}{\mathrm{Com}}
\NewDocumentCommand\Def{m}{\ensuremath{\mathrm{Def}(#1)}}
\NewDocumentCommand\Defpf{m}{\ensuremath{\mathrm{Def^{pf}}(#1)}}
\NewDocumentCommand\dom{o}{\IfNoValueTF{#1}{\mathrm{dom}}{\mathrm{dom}(#1)}}
\NewDocumentCommand\eeq{m}{\mathrm{eeq_{#1}}}
\newcommand{\GB}{\mathrm{GB}}
\newcommand{\id}{\mathrm{id}}
\newcommand{\Ind}{\mathrm{Ind}}
\NewDocumentCommand\IS{m}{\mathrm{I\Sigma}_{#1}}
\NewDocumentCommand\hf{m}{\mathrm{HF}(#1)}
\newcommand{\iso}{\mathrm{iso}}
\NewDocumentCommand\Lang{o}{\IfNoValueTF{#1}{\calL}{\calL_{#1}}}
\newcommand{\Ord}{\mathrm{Ord}}
\newcommand{\PA}{\mathrm{PA}}
\newcommand{\PAm}{\mathrm{PA^{-}}}
\NewDocumentCommand\PC{m}{\mathrm{PC}[#1]}
\newcommand{\rank}{\mathrm{rank}}
\newcommand{\Repl}{\mathrm{Repl}}
\NewDocumentCommand\Tarski{m}{#1\text{-}\mathrm{Tarski}}
\newcommand{\tc}{\mathrm{tc}}
\NewDocumentCommand\SAT{m}{#1\text{-}\mathrm{SAT}}
\newcommand{\Sat}{\mathrm{Sat}}
\NewDocumentCommand\sPC{m}{\mathrm{sPC}[#1]}
\newcommand{\ZF}{\mathrm{ZF}}
\newcommand{\ZFbase}{\mathrm{ZF_{base}}}
\newcommand{\ZFC}{\mathrm{ZFC}}
\newcommand{\Zt}{\mathrm{Z}_{2}}
\newcommand{\bbN}{\mathbb{N}}
\newcommand{\bbZ}{\mathbb{Z}}
\newcommand{\calA}{\mathcal{A}}
\newcommand{\calI}{\mathcal{I}}
\newcommand{\calL}{\mathcal{L}}
\newcommand{\calM}{\mathcal{M}}
\newcommand{\calN}{\mathcal{N}}
\newcommand{\calP}{\mathcal{P}}
\newcommand{\calX}{\mathcal{X}}
\newcommand{\frakA}{\mathfrak{A}}
\newcommand{\frakM}{\mathfrak{M}}
\newcommand{\frakN}{\mathfrak{N}}
\newcommand{\rmI}{\mathrm{I}}
\newcommand{\ttM}{\mathtt{M}}
\newcommand{\ttN}{\mathtt{N}}
\begin{document}


\title[Definiteness properties of first-order schemes]{Definiteness properties\\ of first-order schemes}

\author{Piotr Gruza}
\address[Piotr Gruza]{University of Warsaw, Faculty of Mathematics, Informatics and Mechanics}
\email{p.gruza3@uw.edu.pl}

\author{Mateusz Łełyk}
\address[Mateusz Łełyk]{University of Warsaw, Faculty of Philosophy}
\email{mlelyk@uw.edu.pl}
\thanks{The work of the second author was supported by the National Science Centre (NCN) grant no. 2022/46/E/HS1/00452.}



\begin{abstract}

The paper aims to establish a convenient formal framework for investigating the phenomenon of scheme definiteness, exemplified by first-order internal categoricity as studied by V\"a\"an\"anen, among others. To this end, we introduce the notion of \textit{$\Phi$-definiteness}, thereby refining and extending the conceptual landscape that underlies various first-order categoricity notions in the literature (internal categoricity, strong internal categoricity, intolerance). We provide arguments for the robustness of our definition and present examples of schemes that separate different categoricity- and completeness-like notions. Finally, we offer a brief glimpse into the issue of the definiteness of two canonical foundational schemes -- the induction scheme and the replacement scheme.

\end{abstract}

\maketitle



\section*{Introduction}

It is widely believed, not only among philosophers but also mathematicians, that sentences which express facts about certain abstract domains have a determinate truth value: they are either true or false. Trivial examples aside, one of the most important cases is provided by the natural numbers. Even though our best mathematical theories are incomplete with respect to sentences about non-negative integers (if only because of the G\"odel's incompleteness results), we tend to believe that these are rather limitations of our current axiomatic framework, then indicators that some such statements may simply lack the truth value. This conviction is often backed by categoricity arguments: if we can show that our chosen axioms uniquely determine a certain structure (up to an isomorphism), then this concrete structure determines in turn the truth values of all the sentences of the chosen language. In particular, in the context of arithmetic the conclusion of the Dedekind categoricity argument, is that each two full models of arithmetic and the second-order induction axiom (i.e. models in which all subsets of the universe satisfy the induction principle) are isomorphic. In the context of set theory we have a similar result due to Zermelo for the second order replacement scheme: each two full models of second-order replacement scheme of the same height (i.e. whose ordinals are isomorphic as well-orders), are isomorphic. However, as discussed at length in \cite{Button_Walsh_book}, p.159-160 and \cite{Button_MIR}), these arguments do not deliver the goods: establishing these results, especially in the case of arithmetic, requires the grasp of a much richer structure (all subsets of the natural numbers) than the one being discussed. This is where \textit{internal categoricity results} kick in. Roughly speaking, the goal is to obtain the analogues of Dedekind and Zermelo categoricity results for Peano Arithmetic (PA) and Zermelo-Fraenkel Set Theory (ZF) respectively in axiomatic frameworks not requiring full second-order semantics (see \cite{Feferman_Hellman}, \cite{Vaananen_Wang_Notre_Dame}, \cite{Vaananen_Zermelo_ext}). These frameworks are provided by the full second-order comprehension scheme (treated axiomatically as a deductive system, as in \cite{Vaananen_Zermelo_ext}) or even weak first-order arithmetical theory (such as is \cite{Feferman_Hellman}). We shall refer to the system in which the respective categoricity (later on: definiteness) claim for a theory $S$ is established as the categoricity base for $S$. The role of these formal results in settling the philosophical problem of determinateness of statements of arithmetic and set theory has become an object of vivid discussion (see \cite{Button_MIR}, \cite{Maddy_Vaananen}, \cite{Fischer_zicchetti}, \cite{Waxman_Piccolo}). 

\bigskip

The line of research focused rather on the internal categoricity of concrete, most prominent foundational systems, namely PA and ZF. The first attempt at a general definition of the property "S is internally categorical" was taken in \cite{ena_lel}, which naturally led to the question: what kind of object should $S$ be? Results of of V\"a\"ananen directly applied to the scheme of induction (conjuncted with finitely many basic axioms for successor, addition and multiplication) and the scheme of replacement (likewise conjuncted with finitely many axioms expressing basic properties of $\in$), so internal categoricity was naturally defined as a  property of schemes, to wit: first-order sentences with a free second-order variable. It was then observed that this notion does not lift so easily to a property of first-order theories, seen as (deductively closed) sets of first-order sentences. As a concrete example: there is a scheme $\sigma$ such that the theory consisting of arithmetical substitutional instances of $\sigma$ with arithmetical formulae is deductively equivalent to PA, but $\sigma$ is \emph{not} internally categorical. \cite{ena_lel}'s reaction was that instead of trying to adapt the notion of internal categoricity to theories via some more complicated relations, one should develop metamathematics of schemes, treated as a self-standing foundational objects. This resulted in the definition of a scheme accomodation in a model, a strong model of a scheme and the adaptation of the notion of solidity to the scheme context.

\bigskip

The main ambition of the current paper is to make further steps in the direction uncovered by \cite{ena_lel}. We want to establish a convenient formal framework for investigating the phenomenon of scheme definiteness, exemplified by first-order internal categoricity as studied by V\"a\"an\"anen, among others. To this end, we introduce the notion of \textit{$\Phi$-definiteness}, thereby refining and extending the conceptual landscape that underlies various first-order categoricity notions in the literature (internal categoricity, strong internal categoricity, intolerance). We diverge from the definitions given in \cite{ena_lel} in two different ways. First, we explicitly introduce a parameter to serve as a definiteness base, which enables us to examine how the metatheories (or metaschemes) interact with different notions of definiteness. We allow an arbitrary sequential theory to be used in the role of such a definiteness base, which enables us to give definiteness arguments in very weak theories. Secondly, instead of introducing somewhat complicated language extensions with fresh predicate, we use a much better known extensions of theories with predicative comprehension. This allows us to compactify the definitions and make them better integrated with the standard metamathematical practice. These machinery, along with an adaptation of the notion of interpretability to the context of schemes, is introduced in Section 2. In the next section, we collect various fundamental metamathematical properties of our definiteness criteria. We show that they are invariant under the appropriate notion of bi-interpretability (Section 3.1). Furthermore (in Section 3.2) we show how to separate various definiteness criteria with sequential schemes. Last but not least, in Section 3.3, we investigate whether a given definiteness criterion is preserved upon extending the given scheme in a richer language, for example by adding a new set-sort and full comprehension for it. We conclude our paper with results about definiteness of $\PA$, $\ZF$ and its proper subsystems.


\section{Preliminaries}
In this section, we set out basic definitions and propositions that underlie our work, concentrating on the notions of interpretation and sequentiality. This presentation is not intended to be in any way comprehensive, and we allow ourselves a certain degree of imprecision; we aim rather to fix terminology and notation, and to explain some of the adopted conventions. For a more detailed account, we refer the reader to the preliminaries of \cite{visser_sivs}.

\bigskip
In this article, all theories are one-sorted first-order. Whenever we write about a second-order theory\footnote{For notational simplicity, we assume that classes are extensional; this has no impact on the results.}, we mean a one-sorted first-order theory encoding it, with first- and second-order objects indicated by predicates of disjoint extensions. Furthermore, our attention is restricted to finite, purely relational languages, with function symbols used only as shorthand. The finiteness assumption is important, whereas the latter restriction is made merely for convenience.

\begin{defn}\label{defn_translation}
A function $\mathtt{t}\colon\Lang[T]\to\Lang[S]$ is a translation of $\Lang[T]$ to $\Lang[S]$ if there exist $\Lang[S]$-formulas $\delta(\tup{x}),\eta(\tup{x}_{1},\tup{x}_{2})$, and $\varphi_{R}(\tup{x}_{1},\ldots,\tup{x}_{\mathrm{arity}(R)})$ for each symbol $R$ from the signature of $\Lang[T]$, with $\mathrm{length}(\tup{x}_{i})=\mathrm{length}(\tup{x})$ for each $i$, such that:
\begin{itemize}
\smallskip
\item $\proves(\forall\tup{x}_{1},\tup{x}_{2})(\eta(\tup{x}_{1},\tup{x}_{2})\rightarrow\delta(\tup{x}_{1})\land\delta(\tup{x}_{2}))$;
\smallskip\smallskip
\item $\proves(\forall\tup{x}_{1},\ldots,\tup{x}_{\mathrm{arity}(R)})(\varphi_{R}(\tup{x}_{1},\ldots,\tup{x}_{\mathrm{arity}(R)})\rightarrow\delta(\tup{x}_{1})\land\ldots\land\delta(\tup{x}_{\mathrm{arity}(R)}))$, for each symbol $R$ from the signature of $T$;
\end{itemize}
and, up to variable renaming:
\begin{itemize}
\smallskip
\item $\mathtt{t}(v_{1}=v_{2})=\eta(\tup{x}_{1},\tup{x}_{2})$;
\smallskip\smallskip
\item $\mathtt{t}(R(v_{1},\ldots,v_{\mathrm{arity}(R)}))=\varphi_{R}(\tup{x}_{1},\ldots,\tup{x}_{\mathrm{arity}(R)})$;
\smallskip\smallskip
\item $\mathtt{t}\mathtt(\lnot\alpha)=\lnot\mathtt{t}(\alpha)$ and $\mathtt{t}\mathtt(\alpha\land\beta)=\mathtt{t}(\alpha)\land\mathtt{t}(\beta)$, for any $\alpha,\beta\in\Lang[T]$;
\smallskip\smallskip
\item $\mathtt{t}\mathtt((\exists v)\alpha(v))=(\exists\tup{x})(\delta(\tup{x})\land\mathtt{t}(\alpha)(\tup{x}))$, for any $\alpha\in\Lang[T]$.
\end{itemize}
\end{defn}

\begin{defn}\label{defn_interpretation}
A translation $\mathtt{t}\colon\Lang[T]\to\Lang[S]$ is called an interpretation of a theory $T$ in a theory $S$, denoted with $\mathtt{t}\colon S\intps T$ or $\mathtt{t}\colon T\intpd S$,  if for every sentence $\sigma\in\Lang[T]$, if $T\proves\sigma$, then $S\proves\mathtt{t}(\sigma)$.
\end{defn}

If we are concerned only with the language of $T$, we may also write \mbox{$\mathtt{t}\colon S\intps\Lang[T]$} or $\mathtt{t}\colon\Lang[T]\intpd S$. The theory $S$ is called the ground theory of the interpretation. We identify two interpretations $\mathtt{t}$ and $\mathtt{t}'$ if they are generated by $\delta$, $\eta$, $\varphi_{R}$ and $\delta'$, $\eta'$, $\varphi_{R}'$, respectively, such that the ground theory proves that both formulas in each corresponding pair are equivalent. To improve readability, we use $\dom^{\mathtt{t}}$, $=^{\mathtt{t}}$, $R^{\mathtt{t}}$ instead of $\delta$, $\eta$, $\varphi_{R}$.

\bigskip
Sometimes, we employ the phrase ``the ground structure'', by which we mean the virtual structure described by the ground theory via its primitive relations from the signature. We often denote the ground structure with $V$ by analogy with the universe of all sets. An~analogous object obtained by applying $\mathtt{t}$ in $S$ is denoted by $\mathtt{t}^{S}$, or simply by $\mathtt{t}$ when it does not lead to confusion. We use this terminology to describe structural properties of interpretations, as formalized in the ground theory. For example, we may write: ``For every sentence $\sigma$, it is provable in $S$ that the ground structure satisfies $\sigma$ if and only if $\mathtt{t}^{S}$ satisfies $\sigma$'' (which is the English counterpart of $(\forall \sigma)(S\proves\sigma\leftrightarrow\mathtt{t}(\sigma))$).

\begin{defn}
Let $\mathtt{t}$ be a translation with $\delta(\tup{x})$ and $\eta(\tup{x}_{1},\tup{x}_{2})$ as in Definition \ref{defn_translation}. Then $\mathtt{t}$ is:
\begin{itemize}
\item $n$-dimensional if $n=\mathrm{length}(\tup{x})$;
\smallskip
\item unrelativized if $\delta(\tup{x})$ is $\tup{x}=\tup{x}$;
\smallskip
\item identity-preserving if $\eta(\tup{x}_{1},\tup{x}_{2})$ is $\tup{x}_{1}=\tup{x}_{2}$;
\smallskip
\item direct if it is one-dimensional, unrelativized, and identity-preserving.
\end{itemize}
\end{defn}

We use similar terminology for interpretations, but require only ground-theory equivalence of formulas instead of their literal equality in the second and third bullet points. Note that in the case of interpretations, none of the above properties depends on the choice of $\delta$ or $\eta$.

\begin{fact}
\ 

\textbf{a)}\ For any translations $\mathtt{t}\colon\Lang[S]\intps\Lang[T]$ and $\mathtt{u}\colon\Lang[T]\intps\Lang[U]$, their composition $\mathtt{u}\circ\mathtt{t}$ (also denoted by $\mathtt{t}\mathtt{s}$) is a translation of $\Lang[U]$ to $\Lang[S]$.

\smallskip
\textbf{b)}\ For any interpretions $\mathtt{t}\colon S\intps T$ and $\mathtt{u}\colon T\intps U$, their composition $\mathtt{u}\circ\mathtt{t}$ (also denoted by $\mathtt{t}\mathtt{s}$) is an interpretation of $U$ in $S$.
\end{fact}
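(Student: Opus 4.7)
The plan is to build the composition explicitly as a translation, verify the syntactic clauses of Definition~\ref{defn_translation}, and then deduce part (b) from the interpretation properties of $\mathtt{t}$ and $\mathtt{u}$.

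First I fix notation: let $\mathtt{t}$ be $n$-dimensional with distinguished formulas $\dom^{\mathtt{t}}$, $=^{\mathtt{t}}$, and $R^{\mathtt{t}}$ for each $R$ in the signature of $T$; let $\mathtt{u}$ be $m$-dimensional with corresponding data $\dom^{\mathtt{u}}$, $=^{\mathtt{u}}$, and $R^{\mathtt{u}}$ for the signature of $U$. A single variable of $\Lang[U]$ is coded under $\mathtt{u}$ by an $m$-tuple of $\Lang[T]$-variables, each of which is in turn coded under $\mathtt{t}$ by an $n$-tuple of $\Lang[S]$-variables; this will yield an $nm$-dimensional composition with variable blocks $\tup{x}=(\tup{x}_1,\ldots,\tup{x}_m)$. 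I define
\[
\dom^{\mathtt{u}\circ\mathtt{t}}(\tup{x}) := \mathtt{t}(\dom^{\mathtt{u}})(\tup{x}_1,\ldots,\tup{x}_m)\wedge\bigwedge_{i=1}^{m}\dom^{\mathtt{t}}(\tup{x}_i),
\]
take $=^{\mathtt{u}\circ\mathtt{t}}(\tup{x},\tup{y})$ to be $\mathtt{t}(=^{\mathtt{u}})(\tup{x}_1,\ldots,\tup{x}_m,\tup{y}_1,\ldots,\tup{y}_m)$, and analogously put $R^{\mathtt{u}\circ\mathtt{t}} := \mathtt{t}(R^{\mathtt{u}})$ for each $R$ in the signature of $U$. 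The two bulleted closure clauses of Definition~\ref{defn_translation} for the composition are then inherited: they hold for $\mathtt{u}$ already inside $T$, and translating via $\mathtt{t}$ into $S$ produces the extra $\dom^{\mathtt{t}}$-conjuncts automatically by the analogous clauses applied to $\mathtt{t}$.

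The inductive part is to show by structural induction on $\varphi\in\Lang[U]$ that $(\mathtt{u}\circ\mathtt{t})(\varphi)$ and $\mathtt{t}(\mathtt{u}(\varphi))$ agree up to variable renaming. The atomic cases are immediate from the chosen $=^{\mathtt{u}\circ\mathtt{t}}$ and $R^{\mathtt{u}\circ\mathtt{t}}$; Boolean connectives are trivial; the only delicate step is the quantifier case, where the relativization $(\exists\tup{x})(\dom^{\mathtt{u}\circ\mathtt{t}}(\tup{x})\wedge\cdot)$ used by the composition must match the result of applying the quantifier clause of $\mathtt{t}$ coordinate-wise to $(\exists\tup{z})(\dom^{\mathtt{u}}(\tup{z})\wedge\cdot)$. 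This match is exactly what the chosen shape of $\dom^{\mathtt{u}\circ\mathtt{t}}$ is designed to produce.

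Part (b) is then immediate: given $U\proves\sigma$, the interpretation property of $\mathtt{u}$ yields $T\proves\mathtt{u}(\sigma)$, and the interpretation property of $\mathtt{t}$ yields $S\proves\mathtt{t}(\mathtt{u}(\sigma))$, which equals $(\mathtt{u}\circ\mathtt{t})(\sigma)$ modulo the equivalence of interpretations stipulated after Definition~\ref{defn_interpretation}. The main obstacle, such as it is, is pure bookkeeping in the quantifier case: keeping track of the $nm$-block indexing and confirming that the iterated relativization assembles precisely into $\dom^{\mathtt{u}\circ\mathtt{t}}$. No substantive new ideas are required.
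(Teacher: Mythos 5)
The paper states this Fact without proof (its preliminaries explicitly defer such details to Visser's treatment), so there is no official argument to compare against. Your overall construction -- an $nm$-dimensional composite whose atomic data are the $\mathtt{t}$-translations of the atomic data of $\mathtt{u}$, checked against the literal function composition $\mathtt{t}\circ\mathtt{u}$ by structural induction, with part (b) obtained by applying the interpretation property of $\mathtt{u}$ and then of $\mathtt{t}$ -- is exactly the standard argument the authors rely on.

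There is, however, one step whose justification fails as written. Definition \ref{defn_translation} requires the closure clauses, e.g.\ $\proves\ {=^{\mathtt{u}\circ\mathtt{t}}}(\tup{x},\tup{y})\rightarrow\dom^{\mathtt{u}\circ\mathtt{t}}(\tup{x})\land\dom^{\mathtt{u}\circ\mathtt{t}}(\tup{y})$, in pure predicate logic. You argue that they ``hold for $\mathtt{u}$ already inside $T$'' and that applying $\mathtt{t}$ ``produces the extra $\dom^{\mathtt{t}}$-conjuncts automatically.'' Neither half is right: the clauses for $\mathtt{u}$ are demanded in pure logic, not in $T$ (and in part (a) $\mathtt{t}$ is merely a translation, so $T$-provability would not transfer to $S$ anyway); a translation need not preserve pure-logic validity of a compound formula such as ${=^{\mathtt{u}}}(\tup{z}_1,\tup{z}_2)\rightarrow\dom^{\mathtt{u}}(\tup{z}_1)\land\dom^{\mathtt{u}}(\tup{z}_2)$, since $\mathtt{t}$ relativizes quantifiers to a possibly empty $\dom^{\mathtt{t}}$ and replaces identity by a formula that is not provably a congruence in pure logic; and the ``automatic'' $\dom^{\mathtt{t}}$-conjuncts appear only at atomic subformulas, whereas $=^{\mathtt{u}}$ and the $R^{\mathtt{u}}$ are in general compound. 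The repair is routine: build the domain conditions into the atomic clauses themselves, i.e.\ take ${=^{\mathtt{u}\circ\mathtt{t}}}(\tup{x},\tup{y})$ to be $\mathtt{t}({=^{\mathtt{u}}})(\tup{x},\tup{y})\land\dom^{\mathtt{u}\circ\mathtt{t}}(\tup{x})\land\dom^{\mathtt{u}\circ\mathtt{t}}(\tup{y})$, and analogously for each $R^{\mathtt{u}\circ\mathtt{t}}$. The closure clauses then hold trivially, and the extra conjuncts are provably redundant over the ground theory once $\mathtt{t}$ and $\mathtt{u}$ are interpretations, so your induction (which in the quantifier case anyway yields agreement with $\mathtt{t}(\mathtt{u}(\varphi))$ only up to logical equivalence, since iterating the single-quantifier clause of $\mathtt{t}$ produces nested rather than block relativizations) and hence part (b) go through unchanged under the paper's convention of identifying interpretations modulo ground-theory-provable equivalence.
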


The next definition introduces a weaker notion of sameness of interpretations than the convention adopted below Definition \ref{defn_interpretation}. It is intended to capture the property: ``There exists an $\Lang[S]$-formula $\iota$ that, provably in $S$, defines an isomorphism between $\mathtt{t}^{S}$ and $(\mathtt{t}^{\ast})^{S}$''.  

\begin{defn}
Interpretations $\mathtt{t},\mathtt{t}^{\ast}\colon S\intps T$ are isomorphic if there exists an $\Lang[S]$-formula $\iota(\tup{x},\tup{x}^{\ast})$\footnote{The lengths of $\tup{x}$ and $\tup{x}^{\ast}$ may be different.} such that $S$ proves the universal closures of the following formulas\footnote{The second, third, and fourth sentences say that $\iota$ determines a bijection between $\eta$ equivalence classes on $\delta$ and $\eta^{\ast}$ equivalence classes on $\delta^{\ast}$. And the last ones say that this bijection preserves all relations of the signature of $T$.}:
\begin{itemize}
\smallskip
\item $\iota(\tup{x},\tup{x}^{\ast})\rightarrow\dom^{\mathtt{t}}(\tup{x})\land\dom^{\mathtt{t^{\ast}}}(\tup{x}^{\ast})$;
\smallskip\smallskip
\item $\dom^{\mathtt{t}}(\tup{x})\rightarrow(\exists\tup{x}^{\ast})(\dom^{\mathtt{t}^{\ast}}(\tup{x}^{\ast})\land\iota(\tup{x},\tup{x}^{\ast}))$;
\smallskip\smallskip
\item $\dom^{\mathtt{t}^{\ast}}(\tup{x}^{\ast})\rightarrow(\exists\tup{x})(\dom^{\mathtt{t}}(\tup{x})\land\iota(\tup{x},\tup{x}^{\ast}))$;
\smallskip\smallskip
\item $\iota(\tup{x},\tup{x}^{\ast})\rightarrow(\tup{x}=^{\mathtt{t}}\tup{y}\leftrightarrow\iota(\tup{y},\tup{x}^{\ast}))\land(\tup{x}^{\ast}=^{\mathtt{t}^{\ast}}\tup{z}^{\ast}\leftrightarrow\iota(\tup{x},\tup{z}^{\ast}))$;
\smallskip\smallskip
\item $\bigwedge_{i\leqs\mathrm{arity}(R)}\iota(\tup{x},\tup{x}^{\ast})\rightarrow(R^{\mathtt{t}}(\tup{x}_{1},\ldots,\tup{x}_{\mathrm{arity}(R)})\leftrightarrow R^{\mathtt{t^{\ast}}}(\tup{x}^{\ast}_{1},\ldots,\tup{x}^{\ast}_{\mathrm{arity}(R)}))$, for each symbol $R$ from the signature of $T$.
\end{itemize}
\end{defn}

\begin{defn}
Adjunctive set theory ($\AS$) is the theory ($\subseteq\Lang[\in]$) axiomatized by the following two axioms:
\begin{itemize}
\smallskip
\item $(\exists a)(\forall x)(x\notin a)$,
\smallskip\smallskip
\item $(\forall a)(\forall b)(\exists c)(\forall x)(x\in c\leftrightarrow x\in a\lor x=b)$.
\end{itemize}
\end{defn}

\begin{defn}
A theory is sequential if it directly interprets $\AS$.
\end{defn}

In $\AS$, and therefore in any sequential theory, we can give basic set-theoretic definitions, in particular, definitions of sequences of any given finite length. This allows us to focus solely on one-dimensional interpretations, as provided by the proposition below.

\begin{fact}
For any sequential theory $S$ and an interpretation $\mathtt{t}$ of some theory in $S$, there exists a $1$-dimensional interpretation $\mathtt{t}^{\ast}$ that is isomorphic to $\mathtt{t}$.\footnote{Note that $\mathtt{t}^{\ast}$ may be relativized.}
\end{fact}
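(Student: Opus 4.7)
The plan is to use the coding of finite sequences available in any sequential theory to replace an $n$-dimensional interpretation by a one-dimensional one whose objects are codes of the original $n$-tuples. Since $S$ directly interprets $\AS$, the entire domain of $S$ is available (unrelativized) as a model of $\AS$. In $\AS$ one can define, for each fixed $n$, a first-order formula $\mathrm{Seq}_{n}(y,x_{1},\ldots,x_{n})$ expressing ``$y$ is the (Kuratowski-style) code of the tuple $(x_{1},\ldots,x_{n})$'', and $\AS$ proves that every tuple has at least one code and that any two codes of the same tuple are equal. I would start by fixing such a formula explicitly and recording these two properties as lemmas, all purely inside the directly interpreted copy of $\AS$.

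Let $n$ be the dimension of $\mathtt{t}$. I would then define $\mathtt{t}^{\ast}$ by pulling each piece of data of $\mathtt{t}$ through the coding:
\[
\dom^{\mathtt{t}^{\ast}}(y)\ :\equiv\ (\exists\tup{x})\bigl(\mathrm{Seq}_{n}(y,\tup{x})\land\dom^{\mathtt{t}}(\tup{x})\bigr),
\]
\[
y_{1}=^{\mathtt{t}^{\ast}}y_{2}\ :\equiv\ (\exists\tup{x}_{1},\tup{x}_{2})\bigl(\mathrm{Seq}_{n}(y_{1},\tup{x}_{1})\land\mathrm{Seq}_{n}(y_{2},\tup{x}_{2})\land\tup{x}_{1}=^{\mathtt{t}}\tup{x}_{2}\bigr),
\]
and analogously for the formula $R^{\mathtt{t}^{\ast}}$ associated with each relation symbol $R$ of $\Lang[T]$. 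As a candidate for the isomorphism witness I would take
\[
\iota(\tup{x},y)\ :\equiv\ \dom^{\mathtt{t}}(\tup{x})\land\mathrm{Seq}_{n}(y,\tup{x}).
\]

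With these definitions in place, the verification splits into two parts. First, I would prove by induction on the complexity of $\Lang[T]$-formulas that $\mathtt{t}^{\ast}(\varphi)(y_{1},\ldots,y_{k})$ is $S$-provably equivalent to
\[
(\exists\tup{x}_{1},\ldots,\tup{x}_{k})\Bigl(\bigwedge_{i}\mathrm{Seq}_{n}(y_{i},\tup{x}_{i})\land\mathtt{t}(\varphi)(\tup{x}_{1},\ldots,\tup{x}_{k})\Bigr).
\]
The atomic steps are by definition; the Boolean steps are immediate; and the quantifier step uses that every $\tup{x}\in\dom^{\mathtt{t}}$ admits a code lying in $\dom^{\mathtt{t}^{\ast}}$ and conversely. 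From this equivalence, together with the fact that $\mathtt{t}$ is an interpretation of some theory $T$, it follows that $\mathtt{t}^{\ast}$ interprets the same $T$. Second, I would check the five clauses of the definition of isomorphism directly against $\iota$: existence and uniqueness of codes give the two surjectivity clauses and the $\iota$-compatibility of $=^{\mathtt{t}}$ on the left, while the $=^{\mathtt{t}^{\ast}}$-compatibility on the right and the preservation of relations are built into the very definitions of $=^{\mathtt{t}^{\ast}}$ and $R^{\mathtt{t}^{\ast}}$.

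The main nuisance, rather than a genuine obstacle, will be the bookkeeping around relativization: one must be careful to distinguish quantification over $\tup{x}$ inside $\dom^{\mathtt{t}}$ from quantification over $y$ inside $\dom^{\mathtt{t}^{\ast}}$, and verify that $=^{\mathtt{t}^{\ast}}$ is $S$-provably an equivalence relation on $\dom^{\mathtt{t}^{\ast}}$ and a congruence for every $R^{\mathtt{t}^{\ast}}$. All of these points reduce to the existence and uniqueness of sequence codes guaranteed by sequentiality, so no further ingredients are needed.
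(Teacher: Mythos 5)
The paper states this as a background Fact without proof (deferring to the preliminaries of Visser's work), so there is no in-paper argument to compare against; your construction --- coding $n$-tuples by elements of the directly interpreted copy of $\AS$, pulling all the data of $\mathtt{t}$ back through the coding, and exhibiting the coding relation as the isomorphism --- is the standard way to prove it and is the right architecture. Two concrete points in your write-up need repair, however.

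First, $\AS$ does \emph{not} prove that any two codes of the same tuple are equal: it has only the empty-set and adjunction axioms, in particular no extensionality, so a given tuple has many distinct (merely extensionally equal) Kuratowski codes. The paper itself flags exactly this in the proof of Theorem \ref{thm_parameters}, noting that $\tuple{\cdot,\cdot}$ is not provably functional in $\AS$. Your argument survives, because the definition of isomorphic interpretations only asks for a bijection between $=^{\mathtt{t}}$-classes and $=^{\mathtt{t}^{\ast}}$-classes, and your $=^{\mathtt{t}^{\ast}}$ already identifies any two codes of $=^{\mathtt{t}}$-equal tuples; but the lemma you should record is ``any two codes of $=^{\mathtt{t}}$-equal tuples are $=^{\mathtt{t}^{\ast}}$-equal'', not literal uniqueness. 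What does hold literally is the converse, that a single object decodes to at most one tuple, and that is what the negation and quantifier steps of your induction actually rely on.

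Second, your witness $\iota(\tup{x},y):\equiv\dom^{\mathtt{t}}(\tup{x})\land\mathrm{Seq}_{n}(y,\tup{x})$ violates the fourth clause of the definition of isomorphic interpretations whenever $\mathtt{t}$ is not identity-preserving: if $\iota(\tup{x},y)$ holds and $\tup{x}=^{\mathtt{t}}\tup{y}$ for some $\tup{y}\neq\tup{x}$, then $y$ does not code $\tup{y}$, so $\iota(\tup{y},y)$ fails, contradicting the required equivalence $\tup{x}=^{\mathtt{t}}\tup{y}\leftrightarrow\iota(\tup{y},y)$. Replace it by
\[
\iota(\tup{x},y)\ :\equiv\ \dom^{\mathtt{t}}(\tup{x})\land(\exists\tup{x}')\bigl(\mathrm{Seq}_{n}(y,\tup{x}')\land\tup{x}'=^{\mathtt{t}}\tup{x}\bigr)\text{,}
\]
i.e.\ close the coding relation under $=^{\mathtt{t}}$ on the left; with that change, and with ``uniqueness'' read as $=^{\mathtt{t}^{\ast}}$-uniqueness as above, all five clauses go through and the rest of your plan is sound.
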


Furthermore, the choice of a direct interpretation of AS is irrelevant for our purposes, as guaranteed by the following proposition -- the fact we keep in mind at all times without mentioning it explicitly.

\begin{fact}
For any sequential theory $T$ and any direct interpretations $\mathtt{a}_{1}$, $\mathtt{a}_{2}$ of $\AS$ in $T$, there exist binary $\Lang[T]$-formulas $\iota$ and $\rho$ such that:
\begin{itemize}
\smallskip
\item $T$ proves that $\iota$ determines an isomorphism from a successor-closed initial segment of the class of finite ordinals of $\mathtt{a}_{1}$ onto an initial segment of the class of finite ordinals of $\mathtt{a}_{2}$;
\smallskip\smallskip
\item for any natural number $n$, $T$ proves that:
\begin{itemize}
    \item for any $x$ there is $y$ such that $\rho(x,y)$;
    \smallskip
    \item for any $y$ there is $x$ such that $\rho(x,y)$;
    \smallskip
    \item for any $\mathtt{a}_{1}$-sequence $s_{1}$ and $\mathtt{a}_{2}$-sequence $s_{2}$, both of length $n$,
    
    $\rho(s_{1},s_{2})$ holds if and only if for any $k$ such that $(k\ls n)^{a_{1}}$,
\end{itemize}
\end{itemize}
\begin{equation*}
(\forall z)(s_{1}(k)=z \leftrightarrow(\forall k')(\iota(k,k')\land s_{2}(k')=z))\text{.}
\end{equation*}
\end{fact}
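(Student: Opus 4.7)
The plan is to construct $\iota$ from parallel successor-sequences through the two direct interpretations of $\AS$ in $T$, and to build $\rho$ from $\iota$ using the same parallel-sequence machinery. For $i\in\{1,2\}$, set $0^{\mathtt{a}_i}:=\emptyset^{\mathtt{a}_i}$ and $\mathrm{Succ}^{\mathtt{a}_i}(z):=z\cup^{\mathtt{a}_i}\{z\}^{\mathtt{a}_i}$. Since $\mathtt{a}_1$ is direct, its $\AS$-coded sequences may store arbitrary $T$-objects; I first introduce a preliminary relation $\iota_0(x,y)$ holding when there exists an $\mathtt{a}_1$-sequence $s$ of Kuratowski pairs with $s(0)=(0^{\mathtt{a}_1},0^{\mathtt{a}_2})$, whose successor steps apply $\mathrm{Succ}^{\mathtt{a}_1}$ to the first coordinate and $\mathrm{Succ}^{\mathtt{a}_2}$ to the second, and which terminates with $(x,y)$. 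By extending or truncating witnesses, $T$ proves that the first-coordinate projection of $\dom[\iota_0]$ is closed under $\mathrm{Succ}^{\mathtt{a}_1}$ and downward closed in the $\mathtt{a}_1$-finite ordinals, and symmetrically for the second coordinate on the $\mathtt{a}_2$-side.

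To upgrade $\iota_0$ to a genuine isomorphism I restrict to a Visser-style cut: keep only those $x$ such that $\iota_0$ is provably single-valued and injective at every $\mathtt{a}_1$-finite ordinal $\leq^{\mathtt{a}_1} x$ and whose image there is $\mathtt{a}_2$-downward closed. A standard cut-shortening argument, relying only on the determinism of $\mathrm{Succ}^{\mathtt{a}_j}$ and the prefix/extension closure of witnessing sequences, shows that this cut is itself a successor-closed initial segment of the $\mathtt{a}_1$-finite ordinals; the restriction $\iota$ of $\iota_0$ to it is then provably in $T$ the desired isomorphism onto an initial segment of the $\mathtt{a}_2$-finite ordinals.

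For $\rho$, I will set $\rho(u,v)$ to hold exactly when either both $u$ and $v$ are matched (i.e., $u$ is an $\mathtt{a}_1$-sequence of some length $n\in\dom[\iota]$, $v$ is an $\mathtt{a}_2$-sequence of length $n'$ with $\iota(n,n')$, and $v(k')=u(k)$ whenever $k<^{\mathtt{a}_1}n$ and $\iota(k,k')$) or neither is such a sequence. The biconditional on length-$n$ sequences is then immediate from the definition. For the left- and right-totality clauses, at each metatheoretic standard $n$ we may unwind the construction $n$ times inside $T$: given an $\mathtt{a}_1$-sequence $u$ of length $n^{\mathtt{a}_1}$, reading off $u(0),\ldots,u(n-1)$ and redistributing them at the $\iota$-matched positions produces the required $\mathtt{a}_2$-sequence $v$, and vice versa; if $u$ is not an $\mathtt{a}_1$-sequence of matched length, any non-matched $T$-object (e.g.\ $\emptyset^{\mathtt{a}_2}$) serves as $v$.

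The main obstacle is establishing the functionality and injectivity of $\iota$ on its cut in $T$ without any induction scheme beyond what $\AS$ supplies. The crux is that a minimal failure of single-valuedness would produce two distinct witnessing sequences of equal length, agreeing on every proper prefix yet diverging at the last step, contradicting the determinism of $\mathrm{Succ}^{\mathtt{a}_j}$; formalised via the standard cut-shortening technique for sequential theories, this is what drives the entire argument through uniformly in any sequential $T$.
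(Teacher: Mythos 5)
The paper states this Fact without proof: it is standard background on sequential theories, for which the preliminaries explicitly defer to Visser's work, so there is no in-paper argument to compare yours against. That said, your outline is exactly the standard route the authors presumably have in mind: generate a preliminary graph $\iota_0$ from parallel successor-chains coded as $\mathtt{a}_1$-sequences of pairs; pass to a definable cut on which single-valuedness, injectivity and initial-segment-hood hold, using cut-shortening to recover closure under successor; and derive $\rho$ from $\iota$ by external unwinding for each standard $n$. The points that genuinely require care in the absence of induction --- uniqueness of predecessors among the $\mathtt{a}_i$-finite ordinals, prefix/extension closure of witnessing sequences, and the shortening of the ``good'' cut to a successor-closed one --- are all correctly identified and are indeed handled by the standard machinery.

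The one concrete weak point is the totality clauses for $\rho$. As you define it, if $u$ is an $\mathtt{a}_1$-sequence whose length is a \emph{nonstandard} element of $\dom[\iota]$, then $u$ counts as ``such a sequence'', so $\rho(u,v)$ can only hold when $v$ is a matching $\mathtt{a}_2$-sequence --- and $T$ cannot in general prove that such a $v$ exists, since building it would require an internal recursion of nonstandard length. Your fallback ``any non-matched object serves as $v$'' does not cover this case. The repair is easy: let $\rho(u,v)$ hold iff ($u$ and $v$ are matched) or ($u$ admits no $\mathtt{a}_2$-match) or ($v$ admits no $\mathtt{a}_1$-match). Totality and surjectivity become immediate, and the biconditional clause is unaffected, because for each standard $n$ the $n$-fold unwinding shows in $T$ that every length-$n$ $\mathtt{a}_1$-sequence (respectively $\mathtt{a}_2$-sequence) does admit a match, so the extra disjuncts are provably false on the constrained domain. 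With that adjustment the argument goes through.
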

\bigskip
We close this section with a brief overview of some other notation.
\begin{itemize}
\smallskip
\item[] $\Def{\calM}$ denotes the set of all subsets of $\calM$ that are definable in $\calM$ with parameters.
\smallskip\smallskip
\item[] $\Defpf{\calM}$ denotes the set of all subsets of $\calM$ that are paraeter-free definable in~$\calM$.
\smallskip\smallskip
\item[] $\bbN$ denotes the standard model of $\PA$.
\smallskip\smallskip
\item[] $\omega$ denotes the set of all natural numbers.
\smallskip\smallskip
\item[] $\ZFbase$ denotes $\ZF$ with all instances of the replacement and comprehension schemes removed.
\smallskip\smallskip
\item[] $\simeq$ denotes an isomorphism relation determined by the context.
\end{itemize}


\section{Ground work}

Let us recall that our attention is restricted to finite, purely relational languages, with function symbols used only for notational convenience. Furthermore, since our primary interest lies in sequential theories, all definitions are formulated with this in mind.

\subsection{Schemes}

\begin{defn}
An $\Lang$-scheme is a first-order sentence of the language $\Lang\cup\cubr{P}$, where $P$ is a unary relational symbol absent from~$\Lang$.
\end{defn}

If $\sigma$ is an $\Lang$-scheme, then $\Lang$ is called the language of $\sigma$, and is denoted by $\Lang[\sigma]$. We write $\sigma(P)$ to indicate that the fresh symbol is~$P$. 

\begin{defn}
An instance of a scheme $\sigma(P)$ is a formula of the form
\begin{equation*}
(\forall\tup{y})(\sigma[\varphi(x,\tup{y})/P(x)])\text{,}    
\end{equation*}
where $\varphi(x,\tup{y})$ is in $\Lang_{\sigma}$. If the tuple $\tup{y}$ is of zero length, the instance is said to be parameter-free.
\end{defn}

The set of all instances of $\sigma$ is denoted by $\sigma(\Lang_{\sigma})$, and the set of all parameter-free instances by $\sigma(\Lang_{\sigma}^{\mathrm{pf}})$. It turns out that for every r.e., sequential theory $T$, there exists an $\Lang[T]$-scheme $\tau$ such that $T$ is axiomatizable by both $\tau(\Lang[T])$ and $\tau(\Lang[T]^{\mathrm{pf}})$; see \cite{Visser_Vaught}.

\bigskip
A trivial example of a scheme is a sentence; the set of all (parameter-free) instances of such a scheme consists solely of that sentence. More notable are the arithmetical induction scheme, denoted by $\Ind(P)$, i.e.
\begin{equation*}
\bigl(P(0)\land(\forall x)(P(x)\rightarrow P(x+1))\ \rightarrow\ (\forall x)(P(x))\bigr)\text{,}
\end{equation*}
and the comprehension scheme $\Com(P)$:
\begin{equation*}
(\exists Y)(\forall x)(x\in Y\leftrightarrow P(x)).\footnote{Let us emphasise that this is indeed a scheme, since we regard second-order theories as being encoded by first-order theories; so the above is, in fact, \mbox{$(\exists y \in \mathrm{Class})(\forall x \in \mathrm{Obj})(x \in y \leftrightarrow P(x))$.}}
\end{equation*}
The consequences of $\PAm+\Ind(\Lang[\PA])$ coincide with $\PA$, and the same is true for $\PAm+\Ind(\Lang[\PA]^{\mathrm{pf}})$; see \cite[Section 8.1, Exercise 8.3]{kaye1991book}. The set of all instances of the scheme consisting of $\Com(\Lang[\Zt])$ together with $\PAm$ and the induction axiom
\begin{equation*}
(\forall X)(0\in X\,\,\land\,\,(\forall y)(y\in X\rightarrow y+1\in X)\ \,\rightarrow\ \,(\forall y)(y\in X))
\end{equation*}
axiomatizes $\Zt$. However, the set of all parameter-free instances of this scheme is strictly weaker -- even when the induction axiom is replaced by the (parametric) induction scheme and supplemented with certain further strengthenings -- as shown in \cite{Kanovei_pfz2}.

\bigskip

Less known is the arithmetical Tarski scheme, denoted by $\Tarski{\Lang[\PA]}$. It is defined as $\lnot(\SAT{\Lang[\PA]})$, where $\SAT{\Lang[\PA]}$ is the conjunction of the compositional clauses that describe the satisfaction predicate $S$ for the language of arithmetic -- in a manner that is meaningful within a base system; in our case, $\mathrm{I}\Delta_{0}+\exp$ is sufficient. Concretely, $\SAT{\Lang[\PA]}$ consists of the $\forall \langle\varphi(\tup{x}),\tup{c}\rangle$-closures, where $\varphi\in\mathrm{Form}_{\Lang[\PA]}$, of the following formulas:
\begin{align*}
\varphi(\tup{x})&\in\mathrm{Atom}_{\Lang[\PA]}&\hspace{0.2em}&\rightarrow\hspace{0.3em}&\bigl(S(\varphi(\tup{x}),\tup{c})&\leftrightarrow\Sat_{\mathrm{Atom}_{\Lang[\PA]}}(\varphi(\tup{x}),\tup{c})\bigr)\text{;}\footnotemark\\
\varphi(\tup{x})&=(\exists y)\psi(\tup{x}y)&\hspace{0.2em}&\rightarrow\hspace{0.3em}&\bigl(S(\varphi(\tup{x}),\tup{c})&\leftrightarrow(\exists d)S(\psi(\tup{x}y),\tup{c}d)\bigr)\text{;}\\
\varphi(\tup{x})&=\varphi_1(\tup{x})\land\varphi_2(\tup{x})&\hspace{0.2em}&\rightarrow\hspace{0.3em}&\bigl(S(\varphi(\tup{x}),\tup{c})&\leftrightarrow S(\varphi_1(\tup{x}),\tup{c})\land S(\varphi_2(\tup{x}),\tup{c})\bigr)\text{;}\\
\varphi(\tup{x})&=\lnot\psi(\tup{x})&\hspace{0.2em}&\rightarrow\hspace{0.3em}&\bigl(S(\varphi(\tup{x}),\tup{c})&\leftrightarrow\lnot S(\psi(\tup{x}),\tup{c})\bigr)\text{.}
\end{align*}
\footnotetext{$\Sat_{\mathrm{Atom}_{\Lang[\PA]}}$ is a satisfaction predicate for atomic formulas of $\Lang[\PA]$. Since we take $\Lang[\PA]$ to be a relational language and $\mathrm{I}\Delta_{0}+\exp$ to be the base theory, its construction is more straightforward.}This scheme can be seen as expressing, relative to some previously chosen base theory (in our case $\mathrm{I}\Delta_{0}+\exp$), that $P$ is not a satisfaction predicate for the language of arithmetic. By Tarski's undefinabilty theorem, $\Tarski{\Lang[\PA]}(\Lang[\PA])$ is provable in $\mathrm{I}\Delta_{0}+\exp$. Later, we also use $\Tarski{\Lang[\ZF]}$ -- a corresponding scheme for the language of set theory.

\subsection{Predicative comprehension}

\begin{defn}
$\PC{\sigma}$ is the extension of the sentence
\begin{equation*}
\forall X\ \sigma[X/P]
\end{equation*}
with the predicative comprehension for $\Lang[\sigma]$, i.e. all sentences~of~the~form:
\begin{equation*}
\forall X\forall x\exists Y\forall z\ \Bigl(z\in Y\leftrightarrow \varphi(z,x,X)\Bigr)\text{,}
\end{equation*}
where $\varphi\in\Lang[\sigma]$.
\end{defn}

Observe that for every sequential scheme $\sigma$ $\PC{\sigma}$ is finitely axiomatizable, see \cite[Theorem 3.3]{Visser_predicative_frege}.

\bigskip

Note also that $\PC{\sigma}$ proves every instance of~$\sigma$. Moreover, if $\calM$ is a model of $\sigma(\Lang[\sigma])$, then $(\calM,\Def{\calM})$ is a model of $\PC{\sigma}$. And therefore $\PC{\sigma}$ is conservative over $\sigma(\Lang[\sigma])$. This is a generalization of the fact that $\ACA_{0}$ is conservative over $\PA$ and the fact that $\GB$ is conservative over $\ZF$. Indeed, $\PC{\PAm+\Ind}=\ACAz$ and $\PC{\ZFbase+\Repl}=\GB$.\todo[noline]{Trzeba gdzieś wprowadzić $\Repl$.}\todo[noline]{Może chcemy gdzieś wytłumaczyć, że suma schematów jest schematem? A może nie...}

\bigskip

If $S$ is a set of schemes in a given language, then we use $\PC{S}$ to denote $\bigcup_{\sigma\in S}\PC{\sigma}$. In particular, if $T$ is a theory, then $\PC{T}$ denotes $\PC{\emptyset}+T$.

\begin{defn}
$\sPC{\sigma}$ is the extension of the sentence
\begin{equation*}
\forall X\ \sigma[X/P]
\end{equation*}
with the set-parameters-only predicative comprehension for $\Lang[\sigma]$, i.e. all sentences of the form:
\begin{equation*}
\forall X\exists Y\forall z\ \Bigl(z\in Y\leftrightarrow \varphi(z,X)\Bigr)\text{,}
\end{equation*}
where $\varphi\in\Lang[\sigma]$.
\end{defn}

As before, if $S$ is a set of schemes in a given language, then we use $\sPC{S}$ to denote $\bigcup_{\sigma\in S}\sPC{\sigma}$. Similarly, if $T$ is a theory, then $\sPC{T}$ denotes $\sPC{\emptyset}+T$.

\bigskip

Let us note that $\sPC{\sigma}$ proves every parameter-free instance of~$\sigma$. In fact, it is conservative over $\sigma(\Lang[\sigma]^{\mathrm{pf}})$, since if $\calM$ is a model of $\sigma(\Lang[\sigma]^{\mathrm{pf}})$, then $(\calM,\Defpf(\calM))$ is a model of $\sPC{\sigma}$. It is worth noting too that $\PC{\sigma}$ extends $\sPC{\sigma}$.

\bigskip
Unlike in the case of $\mathrm{PC}$, $\sPC{\sigma}$ may not be finitely axiomatizable; for example, this occurs for $\sPC{\PAm+\Ind}$. This theory is in fact reflexive, i.e. it proves the consistency of each of its finite fragments. This can be shown by first $\omega$-interpreting $\sPC{\PAm+\Ind}$ in a truth theory $\mathsf{UTB}$ and then checking that $\PA$ proves the consistency of each finite part of the latter theory. See \cite{fischer_minimal_truth} for the details.

\bigskip
The above two definitions serve as a basis for our investigation of schemes. Let us recall that the intended interpretation of a scheme is an open-ended description of properties of intended models or an intended model -- that is a description that remains valid after placing it in a further first-order context. From this perspective, $\sPC{\sigma}$ describes properties of any potential expansion of an intended model arising from placing of the model in a potential further context. The difference between the intended interpretations of $\mathrm{PC}$ and $\mathrm{sPC}$ lies in the approach to undefinable entities of the object-sort; we will not address this issue in the present article. However we note that some of our arguments, in particular the verification of definiteness properties for set theory or comprehension, actually require the usage of $\mathrm{PC}$ and we do not know how to carry them in pure $\mathrm{sPC}$. In contrast, the categoricity arguments for various extensions of the first-order arithmetic can be given in $\mathrm{sPC}$ alone.

\bigskip
Having the intended interpretation in mind, we give the following definitions of two scheme-template counterparts of the provability predicate and corresponding equivalence relations.

\begin{defn}
\ 

\textbf{a.} $\sigma_{1}$ (standalone) implies $\sigma_{2}$ if
\begin{equation*}
\mathrm{(s)PC}[\emptyset]\proves(\forall X)\sigma_{1}[X/P]\rightarrow(\forall X)\sigma_{2}[X/P]\text{.}
\end{equation*}

\textbf{b.} $\sigma_{1}$ and $\sigma_{2}$ are (standalone) equivalent if
\begin{equation*}
\mathrm{(s)PC}[\emptyset]\proves(\forall X)\sigma_{1}[X/P]\leftrightarrow(\forall X)\sigma_{2}[X/P]\text{.}
\end{equation*}
\end{defn}

All the subsequent considerations would remain valid if a scheme were replaced with another that is standalone equivalent to it. For instance, $\PAm+\Ind(P)$ is standalone equivalent to the scheme expressing the least number principle
\begin{equation*}
\PAm\ \ +\ \ (\exists x)P(x)\rightarrow(\exists x)(\forall y<x)P(x)\land\lnot P(y)\text{.}
\end{equation*}

\bigskip
Above, we used the adverb ``standalone'' in the names of variants based on $\mathrm{sPC}$ as opposed to $\mathrm{PC}$. We will adhere to this convention in what follows.

\subsection{Strong models and interpretations}
If $\calM$ is a model for $\Lang[\sigma]$ and $Y$ is a subset of its universe, then we write $(\calM,Y)$ to denote the expansion of $\calM$ to a model of $\Lang[\sigma]\cup\{P\}$ in which $P$ is interpreted as $Y$. Using this convention, the following definition is formulated within $\sPC{\AS}$ and can therefore be used in $\sPC{\sigma}$ for any sequential $\sigma$.

\begin{defn}
Given a fixed language $\Lang=\cubr{S_{1},\ldots,S_{k}}$ and a fixed $\Lang$-scheme $\tau$, while working in $\sPC{\AS}$, we say that
\begin{equation*}
\calM=\robr{X_{\dom}, X_{=}, X_{1}, \ldots, X_{k}}    
\end{equation*}
is a strong model of $\tau$, denoted by $\calM\strongs\tau$, if for every $X_{=}$-invariant relation $Y$ on $X_{\dom}$
\begin{equation*}
(\calM,Y)\models\tau\text{.}
\end{equation*}
\end{defn}

In the above, we do not assume that the full satisfaction predicate for $(\calM,Y)$ is definable; instead, we make use of the fact that $\tau$ is a fixed sentence. On the notational side, if $(\mathcal{M},\mathcal{Y})$ is a model of $\sPC{AS}$, $\mathcal{N}\in\mathcal{Y}$, and $(\mathcal{M},\mathcal{Y})\models(\mathcal{N}\strongs\tau)$, then we write that $\calN$ is a $\mathcal{Y}$-strong model of $\tau$.

\bigskip
A source of concrete examples of strong models arises from interpretations between schemes, as defined below.

\begin{defn}
A (one-dimensional) translation $\mathtt{t}$ of $\Lang[\tau]$ to $\Lang[\sigma]$ is an interpretation of $\tau$ in $\sigma$, denoted by $\sigma\intps\tau$, if $\sPC{\sigma}$ proves that the $\Lang[\tau]$-structure determined by $\mathtt{t}$ is a strong model of~$\tau$.
\end{defn}

A strong model can be seen as a potential interpretation that may arise from interpreting the ground scheme in another. In this sense, interpretations are prior to strong models, with the notion of a strong model being a generalization of that of an interpretation. From the perspective of the correspondence between schemes and open-ended descriptions, an interpretation of a scheme in another relate to a placement of a description in a further context.

\begin{defn}
Interpretations $\mathtt{t}_{1},\mathtt{t}_{2}\colon\sigma\intps\tau$ \textit{are isomorphic}, denoted by $\mathtt{t}_{1}\simeq\mathtt{t}_{2}$, if $\sPC{\sigma}$ proves that the corresponding strong models are isomorphic.
\end{defn}

Observe that if $\mathtt{t}_{1}$ and $\mathtt{t}_{2}$ are isomorphic, then there exists a single formula $\iota(x,y)\in\Lang[\sigma]$ such that $\sPC{\sigma}$ proves that $\iota$ determines an isomorphism between $\mathtt{t}_1$ and $\mathtt{t}_2$. Indeed, suppose that no such formula exists. Then $\sigma(\Lang[\sigma]^{\mathrm{pf}})$ is consistent with the theory 
\begin{equation*}
 \cubr*{\text{``}\iota\text{ is not an isomorphism between }\mathtt{t}_{1}\text{ and }\mathtt{t}_{2}\text{''}}[\iota(x,y)\in\Lang[\sigma]]\text{.}
\end{equation*}
For any model $\calM$ of their union, $(\calM, \Defpf{\calM})$ is a model of $\sPC{\sigma}$. By the definition, no set in $\Defpf{\calM}$ encodes an isomorphism between $\mathtt{t}_1$ and $\mathtt{t}_2$, contradicting the assumption that $\mathtt{t}_{1}$ and $\mathtt{t}_{2}$ are isomorphic.

\bigskip
Before the last definition of this subsection, let us note that scheme interpretations can be composed in the same way as interpretations of theories. Furthermore, for every scheme $\sigma$ there exists an identity interpretation, denoted by $\id_{\sigma}$

\begin{defn}
\ 

\textbf{a.} $\sigma$ is a retract of $\tau$ if there exist $\mathtt{t}\colon\sigma\intps\tau$ and $\mathtt{s}\colon\tau\intps\sigma$ such that $\mathtt{s}\circ\mathtt{t}\simeq\id_{\sigma}$

\textbf{b.} $\sigma$ and $\tau$ are bi-interpretable if there exist $\mathtt{t}\colon\sigma\intps\tau$ and $\mathtt{s}\colon\tau\intps\sigma$ such that $\mathtt{s}\circ\mathtt{t}\simeq\id_{\sigma}$ and $\mathtt{t}\circ\mathtt{s}\simeq\id_{\tau}$.
\end{defn}

We remark that the fact that $\sigma$ is a retract of $\tau$ and $\tau$ is a retract of $\sigma$ does not necessarily imply that $\sigma$ and $\tau$ are bi-interpretable. 

\subsection{\texorpdfstring{$\Phi$}{Φ}-definiteness}

Below, we introduce notions thought of as tools for research on categoricity\-/like, completeness\-/like, and other definiteness-like properties in the first-order realm.
\begin{defn}
Given a formula $\Phi(\calM_1,\calM_2)$ of the second-order language of set theories, the $\Phi$-definiteness statement for $\tau$ is the sentence:
\begin{equation*}
\forall \calM_1,\calM_2\ \Bigl(\bigl(\calM_{1}\strongs\tau\ \land\ \calM_{2}\strongs\tau\bigr)\ \rightarrow\  \Phi(\calM_1,\calM_2)\Bigr)\text{.}
\end{equation*}
\end{defn}

\begin{defn}
We say that $\tau$ is $\Phi$-definite in $\sigma$ if $\PC{\sigma}$ proves the translation into $\Lang[\sigma]$ of the $\Phi$-definiteness statement for~$\tau$.
\end{defn}

Note that in this setting, $\tau$ can be $\Phi$-definite in $\sigma$ also if $\PC{\sigma}$ proves that there is no strong model of $\tau$. This situation does not occur in any example in this article -- unless explicitly stated otherwise -- and we allow ourselves not to signal this explicitly each time.

\bigskip

In general, whether $\tau$ is $\Phi$-definite in $\sigma$ may depend on which direct interpretation of $\AS$ is used to translate the $\Phi$-definiteness statement into the language of $\sigma$. However, this is not the case for the notions considered in this article. As a rule, taking into account that in our framework all strong models are one-dimensional and that exact formalizations depend on the choice of interpretations of $\AS$, it seems reasonable, within this framework, to restrict attention to $\Phi$ that are closed under isomorphism; that is, to those $\Phi$ for which $\mathrm{sPC}[\AS]$ proves that for any $\calM_{1}$, $\calM_{2}$, $\calM_{1}'$, and $\calM_{2}'$ with $\calM_{1}\simeq\calM_{1}'$ and $\calM_{2}\simeq\calM_{2}'$, the translation of $\Phi(\calM_{1},\calM_{2})$ holds if and only if $\Phi(\calM_{1}',\calM_{2}')$ holds.

\begin{defn}
\ \\
\textbf{a.} $\iso(\calM_{1},\calM_{2})$ is the formula\footnote{The exact syntactic forms of $\iso$ and $\eeq{\alpha}$ depend on the languages of the models taken as free variables.}
\begin{equation*}
\exists F\ F\colon\calM_{1}\isoto\calM_{2}\text{.}
\end{equation*}
\textbf{b.} For a given sentence $\alpha$, $\eeq{\alpha}(\calM_{1},\calM_{2})$ is the formula
\begin{equation*}
\calM_{1}\models\alpha\ \leftrightarrow\ \calM_{2}\models\alpha\text{.}
\end{equation*}
\textbf{c.} We say that $\tau$ is categorical in $\sigma$ if it is $\iso$-definite in $\sigma$.

\smallskip
\textbf{d.} $\tau$ is complete in $\sigma$ if for every sentence $\alpha\in\Lang[\tau]$, $\tau$ is $\eeq{\alpha}$-definite~in~$\sigma$.
\end{defn}

One of the differences between the notion of categoricity for schemes just introduced and its two predecessors -- second-order internal categoricity from \cite{Vaananen_Wang_Notre_Dame} and \cite{Button_Walsh_book}, and strong internal categoricity from \cite{ena_lel} -- is that the above definition introduces an additional parameter, $\sigma$, which we interpret as an external context (or referee, or judge) for determining whether the given scheme is categorical. Categoricity, being a metatheoretical property, depends on the metatheory in which it is investigated. As shown in \cite[Appendix 10.C]{Button_Walsh_book}, the $\iso$-definiteness statement\footnote{More precisely, its natural counterpart that deals directly with $n$-ary relations, instead of treating them as sets of ordered pairs.} for the induction scheme is not provable in predicative comprehension\footnote{That is $\PC{\emptyset}$.}. In the definition given in \cite{ena_lel}, the judge is simply a disjoint copy of the scheme under consideration -- this is both philosophically problematic and formally suboptimal. It seems that in many important cases -- such as those of $\PA$, $\Zt$, and $\mathrm{KF}\mu$ -- everything required of the judge can already be provided by $\AS$ alone.

\bigskip

Now, let us turn to our analogue of quasi-categoricity.

\begin{defn}
Given a formula $\Phi(\calM_1,\calM_2)$, $\Phi$-in-every-cardinality is the formula:
\begin{equation*}
((\exists F)\,F\colon\calM_{1}\bijto\calM_{2})\ \rightarrow\ \Phi(\calM_{1},\calM_{2})\text{.}
\end{equation*}
\end{defn}

\begin{defn}
We say that $\tau$ is $\Phi$-definite in every cardinality in $\sigma$ if $\tau$ is $(\Phi\text{-in-every-cardinality})$-definite in $\sigma$.
\end{defn}

\begin{defn}
\ 

\textbf{a.} $\tau$ is categorical in every cardinality in $\sigma$ if it is $\iso$-definite in every cardinality in $\sigma$;

\textbf{b.} $\tau$ is complete in every cardinality in $\sigma$ if for every sentence $\alpha\in\Lang[\tau]$, $\tau$ is $\eeq{\alpha}$-definite in every cardinality in $\sigma$;
\end{defn}

All definitions introduced in this subsection can also be reformulated by replacing $\mathrm{PC}$ with $\mathrm{sPC}$. We use the adverb ``standalone'' in the names of variants based on $\mathrm{sPC}$. For example, we say that $\tau$ is standalone $\Phi$-definite in $\sigma$ if $\sPC{\sigma}$ proves the $\Phi$-definiteness statement for~$\tau$; similarly, $\tau$ is standalone categorical in $\sigma$ if it is standalone $\iso$-definite in~$\sigma$.

\bigskip

At this point, it is worth noting an important distinction between $\tau$ not being (standalone) $\Phi$-definite in $\sigma$ and $\tau$ being (standalone) $\Phi$-non-definite in $\sigma$. In the first case, $\mathrm{(s)PC}[\sigma]$ does not prove the $\Phi$-definiteness statement for $\tau$; in the second, $\mathrm{(s)PC}[\sigma]$ proves its negation. We apply similar distinctions to other notions as well -- for instance, we say that $\tau$ is standalone non-categorical in $\sigma$ if $\sPC{\sigma}$ proves the negation of the $\iso$-definiteness statement for $\tau$.


\section{Basic results}

This section is devoted to outlining the main properties of various definiteness notions introduced in the previous section. In particular, in Section 3.1 we show that these notions interact well with the newly introduced notions of retract and bi-interpretability notions for schemes: in the sense to be explained below, each $\Phi$-definiteness notion introduced above is closed under taking retracts, both with respect to the schemes which are being tested for $\Phi$-definiteness, and the collection of schemes which provide us with the metatheory. This witnesses the robustness of our notions. Then, in the next subsection 3.2, we construct schemes that separate various definiteness criteria, along all the three key dimensions constituting our conceptual environment: the quasi-categoricity/categoricity, the completeness/categoricity and the standalone/involving parameters distinctions. Section 3.3 is devoted to the verification which operations on schemes preserve their $\Phi$-definiteness, for various possible $\Phi$'s.

\subsection{Closure properties}

We begin this section with a lemma showing that, when proving categoricity in $\mathsf{AS}$, we may assume that the strong models under consideration inherit the equality relation from the background model of $\mathsf{AS}$. This assumption will simplify some of the subsequent arguments. We note that the proof below goes through also for any other of the $\Phi$-definiteness notions studied in this paper, in the place of ``categorical''.
\begin{lem}\label{lem_abs_equal}
Suppose that $\tau$ is (standalone) categorical in $\mathsf{AS}$ with respect to strong models with absolute equality. Then $\tau$ is (standalone) categorical in $\mathsf{AS}$.
\end{lem}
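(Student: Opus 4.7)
The strategy is to reduce to the absolute-equality case by associating to each strong model of $\tau$ an isomorphic strong model whose equality is absolute. Working in $\PC{\AS}$ (the $\sPC{\AS}$ variant proceeds in parallel), I would fix two arbitrary strong models $\calM_{i}=(X_{\dom}^{i}, X_{=}^{i}, X_{1}^{i}, \ldots, X_{k}^{i})$ of $\tau$ for $i\in\{1,2\}$. The plan is: construct for each $i$ a strong model $\tilde\calM_{i}$ of $\tau$ with absolute equality, together with a class-definable isomorphism $F_{i}\colon\calM_{i}\simeq\tilde\calM_{i}$; apply the hypothesis to obtain an isomorphism $G\colon\tilde\calM_{1}\simeq\tilde\calM_{2}$; then compose to get $F_{2}^{-1}\circ G\circ F_{1}\colon\calM_{1}\simeq\calM_{2}$, as required.

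For the construction of $\tilde\calM_{i}$ from $\calM_{i}$, the natural template is to take the domain $\tilde X_{\dom}^{i}$ to be a class in class-definable bijection with the quotient $X_{\dom}^{i}/X_{=}^{i}$, and to define each relation $\tilde X_{j}^{i}$ as the pushforward of $X_{j}^{i}$ along the quotient map. These pushforwards are well-defined thanks to the $X_{=}^{i}$-invariance of the $X_{j}^{i}$, which is part of $\calM_{i}$ being a strong model of $\tau$. Verifying that $\tilde\calM_{i}\strongs\tau$ should then be routine: any subset of $\tilde X_{\dom}^{i}$ pulls back to an $X_{=}^{i}$-invariant subset of $X_{\dom}^{i}$, so the strong-model property of $\calM_{i}$ together with the isomorphism transports the relevant instance of $\tau$. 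The isomorphism $F_{i}$ itself is the graph of the quotient map, which is class-definable from $X_{=}^{i}$.

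The hard part will be actually realizing the domain $\tilde X_{\dom}^{i}$ inside $\PC{\AS}$, since a transversal for an arbitrary class-equivalence relation is not in general definable there: a naive choice of representatives would require a choice principle not provided by $\PC{\AS}$. To bypass this, I would exploit the ambient $\AS$ machinery: via predicative comprehension with $X_{=}^{i}$ as a parameter, together with the $\AS$-definable pairing and finite-set coding, one selects for each $X_{=}^{i}$-equivalence class a canonical code in the ground universe, and takes $\tilde X_{\dom}^{i}$ to be the class of such codes. In the standalone variant, the same construction must be adapted to $\sPC{\AS}$ by using only the set parameter $X_{=}^{i}$, which is available in the set-parameters-only comprehension scheme, so that no object parameters are needed in the comprehension formulas defining $\tilde\calM_{i}$ and $F_{i}$.
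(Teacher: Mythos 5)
Your overall reduction scheme (build absolute-equality copies $\tilde{\calM}_{i}$ isomorphic to $\calM_{i}$ inside the given model, apply the hypothesis there, and compose) is the natural first attempt, but the step you yourself flag as ``the hard part'' is a genuine gap, and it is precisely the obstacle that forces the paper onto a different route. In $\PC{\AS}$ (a fortiori in $\sPC{\AS}$) there is no way to ``select for each $X^{i}_{=}$-equivalence class a canonical code in the ground universe'': $\AS$ has no extensionality, no foundation, no rank function, and no definable global ordering, so Scott-trick-style canonical representatives are unavailable (contrast the proof of Lemma \ref{lem_comp_mod_of_ZF}, where exactly this normalization is performed, but over $\ZFbase+\Repl$, where Scott's trick \emph{is} available); the $X^{i}_{=}$-classes may be proper classes, so they cannot be coded by single objects via the $\AS$ pairing or finite-set apparatus; and predicative comprehension only produces classes defined by formulas, never a transversal or a selector. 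So the domain $\tilde{X}^{i}_{\dom}$ you need simply need not exist as a class of $(\calM,\calX)$, and without it neither $\tilde{\calM}_{i}$ nor the isomorphism $F_{i}$ can be formed.

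The paper circumvents this by not quotienting internally at all. It builds a new two-dimensional interpretation of $\AS$ inside the ground model whose domain has three sorts -- sets, copies of $\calN_{1}$-elements, copies of $\calN_{2}$-elements -- and whose interpreted equality $=^{3}$ is rigged to coincide with $=^{1}$ and $=^{2}$ on the respective atom sorts. The quotient by $=^{3}$ is then taken \emph{externally}, in the metatheory, yielding a genuine structure $\calM^{+}\models\AS$ in which the images of $\calN_{1}$ and $\calN_{2}$ have absolute equality and are $\Def{\calM^{+}}$-strong models of $\tau$; the hypothesis is applied in $(\calM^{+},\Def{\calM^{+}})$, and the resulting definable isomorphism is pulled back to $(\calM,\calN_{1},\calN_{2})$. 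To salvage your argument you would need to replace the internal choice of representatives by some such change-of-model manoeuvre; as written, the proof does not go through.
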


\begin{proof}
We give a proof for the standalone version. Fix $(\mathcal{M},\mathcal{X})\models \sPC{\mathsf{AS}}$ and suppose that $\mathcal{N}_1, \mathcal{N}_2$ are $\mathcal{X}$-strong models of $\tau$. Let $=^i$ denote the equality relation on $\mathcal{N}_i$. Recall that (see \cite[Section 2.4]{visser_sivs}) we have a one-dimensional interpretation $\mathsf{Num}: \mathsf{AS}\rhd \PAm$. Let $x=0,x=1,x=2$ denote the predicates defining the equivalence classes of $0$, $1$ and $2$ under this interpretation. We define a two-dimensional interpretation of $\mathsf{AS}$, whose domain, denoted with $M^3$, consists of three types of elements: ($i$) sets, ($ii$) 1-atoms and ($iii$) 2-atoms. The elements of $M^3$ will be encoded as pairs $(a_1,a_2)$ with elements of type (1) consisting of these pairs in which $a_2=0 $, the elements of type (2) consisting of those pairs in which $a_2= 1$ and the elements of type (3) of those pairs such that $a_2=2$. The idea is that within each type, each element of $\mathcal{M}$ is represented via a possibly infinite family of objects, corresponding to different elements of the equivalence classes of $0,1,2$ (note that this can be simplified if $\mathcal{M}$ contains at least three definable elements or we allow for parametric interpretability). On so defined universe we introduce the equality relation $=^3$ given by $(a_1,a_2) =^3 (b_1,b_2)$ iff both tuples belong to the same type and
\begin{enumerate}
    \item either $(a_1,a_2)$ belongs to type $(1)$ and $a_1=b_1$; or
    \item $(a_1,a_2)$ belongs to type $(2)$ and $a_1=^1 b_1$; or
    \item $(a_1,a_2)$ belongs to type (3) and $a_1=^2 b_1.$
\end{enumerate}
(recall that $=^i$ is the equality relation of $\mathcal{N}_i$; we tacitly assume that for $i=1,2$ $=^i$ is defined on the whole universe and coincides with identity outside of $\mathcal{N}_i$). Finally we define the relation $\in^3$: $(a_1,a_2) \in^3 (b_1,b_2)$ iff both of the following hold
\begin{enumerate}
    \item $b_2=0$ and
    \item there are $c_1, c_2$ such that $(a_1,a_2)=^3(c_1, c_2)$ and $\tuple{c_1,c_2}\in b_1$
\end{enumerate}
 In the above $\tuple{c_1, c_2}$ stands for an arbitrary element $z$ coding the tuple $\tuple{c_1, c_2}$ , where $\tuple{\cdot,\cdot}$ denotes the canonical Kuratowski pair (defined within $\mathsf{AS}$). Note that there can be more than one such element, so $\tuple{c_1,c_2}\in b_1$ should be read as "there is $z$ such that $z = \tuple{c_1,c_2}$ and $z\in b_1$". We check that $(M^3, =^3, \in^3)\models \mathsf{AS}$. The emptyset axiom is clear. To check the adjunction axiom fix any $x:=(a_1,a_2),y:=(b_1,b_2)\in\mathcal{M}^3$. We are looking for $z$ such that $(z = x\cup \{y\})^{\calM^3}$. If $x$ is of type $(2)$ or $(3)$, then any element of the form $(\{\tuple{b_1,b_2}\}, c)$, where $c=0$, can serve as $(x\cup \{y\})^{\calM^3}$. Otherwise, if $x$ is of type (1), then we define $z$ to be any element of the form $(a_1\cup \{\tuple{b_1,b_2}\}, a_2)$ (note that $a_1\cup \{\tuple{b_1,b_2}\}$ is not uniquely determined but we can pick any element which is extensionally equivalent to this set).
 
 Finally we expand the model $(M^3,=^3, \in^3)$ with $(\mathcal{M}, \mathcal{N}_1, \mathcal{N}_2)$-definable subsets carving out an isomorphic copy of $\mathcal{N}_i$ (denoted $\mathsf{N}_i$) within elements of type $i+1$. We show how to do it for $\mathcal{N}_1$, the definitions for $\mathcal{N}_2$ being fully analogous. For example, the universe of $\mathcal{N}_1$ is given via a formula ($\dom^{\mathcal{N}_1}$ denotes the domain of $\mathsf{N}_1$)
 \[\mathrm{N}_1(x_1,x_2):= x_1\in \dom^{\mathcal{N}_1} \wedge x_2=1,\]
 and each $n$-ary relation $R$ used by $\tau$ is defined naturally from $R^{\mathcal{N}_1}$ (the interpretation of $R$ on $\mathcal{N}_1$) using a formula with $2n$ variables. For example, for $n=2$, we use the formula
 \[\mathrm{R}_1(x_1,x_2, y_1,y_2):= \tuple{x_1,y_1}\in R^{\mathcal{N}_1} \wedge x_2 =  y_2=1.\]
 Let $\mathcal{M}^3 := (M^3, =^3, \in^3, \mathsf{N}_1,\mathsf{N}_2)$. We observe that that $(\mathsf{N}_i, =^3)$ is an $\Def{\mathcal{M}^3}$-strong model of $\tau$, because it is isomorphic to $\mathcal{N}_i$ via an $(\mathcal{M},\mathcal{X})$-definable isomorphism and clearly $\Def{\calM^3}\subseteq \mathcal{X}$. Now observe that the quotient model
\[\mathcal{M}^+:= \left(\faktor{\mathcal{M}^3}{=^3}, \faktor{\in^3}{=^3}, \faktor{\mathsf{N}_1}{=^3}, \faktor{\mathsf{N}_2}{=^3}\right),\]
is a model of $\mathsf{AS}$ in the language containing symbols for the universes and relations from $\mathcal{N}_i$'s in which the additional symbols define disjoint isomorphic copies of $\mathcal{N}_i$'s with the absolute equality. We note that $\mathcal{M}^+$ is isomorphic to $\mathcal{M}^3$ by simply quotienting $(\mathcal{M}^3, \in_3, \mathsf{N}_1,\mathsf{N}_2)$ with $=^3$. In particular, $\faktor{\mathsf{N}_i}{=^3}$ is a $\Def{\mathcal{M^+}}$-strong model of $\tau$. Since $(\mathcal{M}^+, \Def{\mathcal{M^+}})\models \sPC{\mathsf{AS}}$, we conclude that there is a formula $\varphi(x,y)$ which defines an isomorphism $\faktor{\mathsf{N}_1}{=^3}\rightarrow \faktor{\mathsf{N}_2}{=^3}$. Using $\varphi(x,y)$ one easily defines an isomorphism $\mathcal{N}_1\rightarrow \mathcal{N}_2$ in $(\mathcal{M}, \mathcal{N}_1, \mathcal{N}_2).$
\end{proof}

We stress that it is guaranteed that the above lemma holds for an arbitrary sequential theory in the place of AS.

\begin{thm}[Closure under retracts]
Fix schemes $\sigma, \tau$ and assume that $\sigma$ is a retract of $\tau$. Let $T$ be any countable collection of schemes. Then
\begin{enumerate}
    \item If $\tau$ is (standalone) categorical in $T$, then $\sigma$ is (standalone) categorical in $T$.
    \item If $\tau$ is (standalone) complete in $T$, then $\sigma$ is (standalone) complete in $A$.
\end{enumerate}
\end{thm}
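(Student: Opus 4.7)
The plan is to reduce $\Phi$-definiteness of $\sigma$ to that of $\tau$ by transporting strong models along $\mathtt{t}\colon\sigma\intps\tau$, applying the hypothesis on the $\tau$-side, and transporting back via $\mathtt{s}\colon\tau\intps\sigma$, using the retract identity $\mathtt{s}\circ\mathtt{t}\simeq\id_{\sigma}$ to reconcile the endpoints. The central auxiliary step is a transfer lemma: in any $(\calM,\calX)\models\PC{T}$, if $\calN$ is an $\calX$-strong model of $\sigma$, then $\mathtt{t}^{\calN}$ is an $\calX$-strong model of $\tau$. This is essentially bookkeeping: the closure of $\calX$ under $\Lang[\sigma]$-predicative comprehension makes $(\calN,\calX\restr\calN)$ a model of $\sPC{\sigma}$, and by the very definition of $\mathtt{t}\colon\sigma\intps\tau$, $\sPC{\sigma}$ proves that $\mathtt{t}^V$ is a strong model of $\tau$. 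Therefore, for every $Y\in\calX$ with $Y\subseteq\mathtt{t}^{\calN}$, the membership $Y\in\calX\restr\calN$ delivers $(\mathtt{t}^{\calN},Y)\models\tau$.

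With the transfer lemma in hand, part (1) is a chain of three isomorphisms. Given $\calX$-strong models $\calN_1,\calN_2$ of $\sigma$, the lemma upgrades them to $\calX$-strong models $\mathtt{t}^{\calN_1},\mathtt{t}^{\calN_2}$ of $\tau$; categoricity of $\tau$ in $T$ then produces an $\calX$-definable isomorphism $F\colon\mathtt{t}^{\calN_1}\isoto\mathtt{t}^{\calN_2}$, whose translation through $\mathtt{s}$ is an $\calX$-definable isomorphism $\mathtt{s}\circ\mathtt{t}^{\calN_1}\isoto\mathtt{s}\circ\mathtt{t}^{\calN_2}$ of $\Lang[\sigma]$-structures; and the retract identity, being a theorem of $\sPC{\sigma}$, supplies an $\Lang[\sigma]$-formula which, applied inside $(\calN_i,\calX\restr\calN_i)$, defines an isomorphism $\calN_i\isoto\mathtt{s}\circ\mathtt{t}^{\calN_i}$ in $\calX$. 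Composing the three yields the desired $\calN_1\isoto\calN_2$. Part (2) runs along the same rails: for $\alpha\in\Lang[\sigma]$, apply $\eeq{\mathtt{s}(\alpha)}$-definiteness of $\tau$ to obtain $\mathtt{t}^{\calN_1}\models\mathtt{s}(\alpha)\iff\mathtt{t}^{\calN_2}\models\mathtt{s}(\alpha)$, rewrite each side via the interpretation--satisfaction equivalence $\mathtt{t}^{\calN_i}\models\mathtt{s}(\alpha)\iff(\mathtt{s}\circ\mathtt{t})^{\calN_i}\models\alpha$, and use the retract isomorphism $(\mathtt{s}\circ\mathtt{t})^{\calN_i}\simeq\calN_i$ to reach $\calN_1\models\alpha\iff\calN_2\models\alpha$.

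The standalone variants go through along the same lines with $\sPC{T}$ in place of $\PC{T}$; the only thing to check is that the transfer lemma survives under set-parameters-only comprehension, which it does because every subset of $\calN$ or $\mathtt{t}^{\calN}$ entering the argument is either already supplied by $\calX$ (namely $Y$ itself) or is defined from a fixed $\Lang[\sigma]$-formula with at most set parameters (the retract isomorphism, and the $\mathtt{s}$-translation of $F$ with $F$ as a set parameter). The main obstacle, and really the only point demanding genuine care, is precisely the transfer lemma: it hinges on predicative comprehension in the metatheory being rich enough that $(\calN,\calX\restr\calN)\models\sPC{\sigma}$, so that the interpretation-defining property of $\mathtt{t}\colon\sigma\intps\tau$ can be invoked inside the strong model. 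Once this is secured, the remainder is a straightforward chase through $\mathtt{t}$, $\mathtt{s}$, and the retract identity.
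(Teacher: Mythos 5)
Your proposal is correct and follows essentially the same route as the paper's own proof: restrict the ambient class family to the strong model to get a model of $\sPC{\sigma}$, push the strong models of $\sigma$ forward along $\mathtt{t}$ to $\calX$-strong models of $\tau$, apply the hypothesis there, pull the resulting isomorphism (resp.\ sentence equivalence) back through $\mathtt{s}$, and close the loop with the retract isomorphism $\calN_i\simeq(\mathtt{t}\mathtt{s})^{\calN_i}$. Your explicit isolation of the transfer lemma and the sentence-by-sentence treatment of completeness via $\eeq{\mathtt{s}(\alpha)}$ are just slightly more detailed renderings of the steps the paper states in passing.
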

\begin{proof}
The proofs of both points start in the same way. We give only "standalone" versions of both claims but the proofs straightforwardly adapt. Assume that $\sigma$ is a retract of $\tau$, that is we have interpretations $\mt{t}: \sigma\intps \tau$ and $\mt{s}: \tau\intps\sigma$ such that $\sPC{\sigma}\proves\exists F: V\rightarrow V^{\mt{t}\mt{s}}$. Recall that $V$ denotes the ground structure of $\sPC{\sigma}$. Fix any  $T$, assume that $\tau$ is standalone categorical in $T$ and fix $(\mathcal{M},\mathcal{X})\models\sPC{T}$. Fix any two $\mathcal{X}$-strong models $\calM_{1}\strongs\sigma$, $\calM_2\strongs \sigma$ such that $\calM_i\in \mathcal{X}$. Let $\mathcal{X}_i:= \{X\cap M_i: X\in \mathcal{X}, X \textnormal{ is $=^i$ invariant}\}$, where $=^i$ is the equality on $\mathcal{M}_i$. Observe that $\mathcal{X}_i\subseteq \mathcal{X}$. Since $\calM_i$ is an $\mathcal{X}$-strong model of $\sigma$, $(\calM_i,\mathcal{X}_i)\models \sPC{\sigma}$. Consider $\calM_1^{\mt{t}}$ and $\calM_2^{\mt{t}}$ and observe that, by the definition, $\calM_{i}^{\mt{t}}$ are $\mathcal{X}$-strong models of $\tau.$ $(*)$ We argue for (1): since $\tau$ is standalone categorical in $T$, it follows that in $\mathcal{X}$ there is an isomorphism $F:\calM_1^{\mt{t}}\rightarrow \calM_2^{\mt{t}}.$ Since $\calM_i^{\mt{t}\mt{s}}$ is $\calM_i^{\mt{t}}$ definable, $F$ restricts to an isomorphism $F_{\mt{t}}: \calM_1^{\mt{t}\mt{s}}\rightarrow \calM_2^{\mt{t}\mt{s}}$. However, since $\sigma$ is a retract of $\tau$, there are also isomorphisms $F_i\in \mathcal{X}_i$, $F_i: \calM_i\rightarrow \calM_i^{\mt{t}\mt{s}}$. Hence $F_2^{-1}\circ F_{\alpha}\circ F_1: \calM_1\rightarrow \calM_2$.

Now, starting from $(*)$, we argue for (2). Since $\tau$ is standalone complete in $T$, $\calM_1^{\mt{t}}$ and $\calM_2^{\mt{t}}$ are elementary equivalent. However then it follows that $\calM_1^{\mt{t}\mt{s}}, \calM_2^{\mt{t}\mt{s}}$ are elementary equivalent, because they are obtained from $\calM_1^{\mt{t}}$ and $\calM_2^{\mt{t}}$ via the same (parameter-free) interpretation. However, since $\sigma$ is a retract of $\tau$, there are also isomorphisms $F_i\in \mathcal{X}_i$, $F_i: \calM_i\rightarrow \calM_i^{\mt{t}\mt{s}}$. So it follows, that $\calM_1$ and $\calM_2$ are elementary equivalent.
\end{proof}

\begin{thm}[Dependence on the metatheory]
Fix a scheme $\tau$ and a collection of schemes $T$. Let $S$ be a collection of schemes which is a retract of $T$ (either as theories or as schemes).
\begin{enumerate}
    \item If $\tau$ is (standalone) categorical in $T$, then $\tau$ is (standalone) categorical in $S$.
    \item If $\tau$ is (standalone) complete in $T$, then $\tau$ is (standalone) complete in $S$.
    \item If $\tau$ is (standalone) categorical-in-every-cardinality in $T$, then $\tau$ is (standalone) categorical-in-every-cardinality in $S$.
    \item If $\tau$ is (standalone) complete-in-every-cardinality in $T$, then $\tau$ is (standalone) complete-in-every-cardinality in $S$.
\end{enumerate}
\end{thm}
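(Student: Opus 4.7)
All four claims follow from a single template, which I describe for the standalone version of (1); the other cases are obtained by replacing $\iso$ with $\eeq{\alpha}$, restricting quantification to equinumerous strong-model pairs, or switching $\mathrm{sPC}$ to $\mathrm{PC}$ with only cosmetic changes. Let $\mt{t}\colon S \intps T$ and $\mt{s}\colon T \intps S$ witness that $S$ is a retract of $T$, so that $\mt{s}\circ\mt{t}\simeq\id_S$ is provable in $\sPC{S}$ via some formula $\iota_0\in\Lang[S]$. The high-level idea is that any model of $\sPC{S}$ carries a canonical companion model of $\sPC{T}$, and the retract identity lets us shuttle strong models of $\tau$ between the two, reducing categoricity in $S$ to the assumed categoricity in $T$.

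Fix $(\calM,\calX)\models \sPC{S}$ and two $\calX$-strong models $\calN_1,\calN_2$ of $\tau$. Interpreting $\mt{t}$ inside $(\calM,\calX)$ produces the companion structure $\calM^{\mt{t}}$; let $\calY$ consist of those subsets of $\calM^{\mt{t}}$ cut out by $\Lang[T]$-formulas with class parameters from $\calX$. Using the sPC axioms for $S$ and the fact that $\mt{t}$ interprets each scheme of $T$, one checks $(\calM^{\mt{t}},\calY)\models\sPC{T}$. The formula $\iota_0$ supplies an $\calX$-definable bijection $H\colon \calM\to(\calM^{\mt{t}})^{\mt{s}}$ respecting the signature of $S$. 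Push each $\calN_i$ through $H$ and encode the result via $\mt{s}$, obtaining an isomorphic copy $\calN_i'$ whose universe and predicates lie inside $\calM^{\mt{t}}$ as $\calY$-classes.

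The key check is that $\calN_i'$ remains a $\calY$-strong model of $\tau$: any $=^{\calN_i'}$-invariant $\calY$-class pulls back through $\mt{s}$ and $H^{-1}$ to an $=^{\calN_i}$-invariant $\calX$-class, on which $\tau$ holds by $\calX$-strongness of $\calN_i$, and this propagates to $\calN_i'$ along the isomorphism. Applying the standalone categoricity of $\tau$ in $T$ inside $(\calM^{\mt{t}},\calY)$ yields $G\in\calY$ witnessing an isomorphism $\calN_1'\to\calN_2'$; conjugating $G$ by $H^{\pm1}$ and unwinding the translations delivers the desired $\calX$-definable isomorphism $\calN_1\to\calN_2$. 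Parts (3) and (4) require only observing that an $\calX$-bijection $\calN_1\to\calN_2$ transports through $H$ to a $\calY$-bijection $\calN_1'\to\calN_2'$; parts (2) and (4) replace $\iso$ with $\eeq{\alpha}$, and elementary equivalence is carried along the chain of isomorphisms in exactly the same way.

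The main obstacle is the strongness check, which is precisely where the retract identity $\mt{s}\circ\mt{t}\simeq\id_S$ is essential: pure mutual interpretability between $S$ and $T$ would not suffice, because without $\iota_0$ one cannot guarantee that pullbacks of $\calY$-classes through $H$ and $\mt{s}$ land in the part of $\calX$ relevant to the original strongness of $\calN_i$. A secondary technical point is to confirm that the encoded $\calN_i'$ and the pulled-back $G$ all remain inside $\calY$ and $\calX$ respectively, using only class parameters already available; this is routine once the definitions of $\calX$ and $\calY$ are unpacked, and the ``as schemes'' versus ``as theories'' distinction mentioned in the statement affects only the bookkeeping of which formulas witness $\iota_0$, not the structure of the argument.
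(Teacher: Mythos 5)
Your argument is correct and takes essentially the same route as the paper's proof: pass to the internal model $(\calM^{\mt{t}},\calX^{\mt{t}})\models\sPC{T}$, transport the strong models along the retract isomorphism $F\colon V\to V^{\mt{t}\mt{s}}$, verify they remain strong there, apply the assumed definiteness, and conjugate the resulting witness back. The only (immaterial) difference is that the paper takes the second-order part of the companion model to be $\calX^{\mt{t}}=\cubr*{X\in\calX}[X\subseteq\dom^{\mt{t}},\ X\text{ closed under }=^{\mt{t}}]$ rather than your collection of $\Lang[T]$-definable classes with parameters from $\calX$; the two coincide, so nothing changes.
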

\begin{proof}

We prove (1) and (3), the argument for the "complete" counterparts being fully analogous. Similarly, we focus on the standalone variants. For $(1)$, assume that $(\mathcal{M},\mathcal{X})\models\sPC{S}$ and $\mt{t}: S\intps T$, $\mt{s}: T\intps S$, $F: V\rightarrow V^{\mt{t}\mt{s}}$ witness the that $S$ is a retract of $T$ (note that $F$ is strictly speaking a multifunction which is invariant under the appropriate equivalence relations). Fix any $\mathcal{N}_0,\mathcal{N}_1\in\mathcal{X}$ such that $\mathcal{N}_i\strongs \tau$. Let $\mathcal{X}^{\mt{t}} = \{X : X\in\mathcal{X}, X\subseteq \dom^\mt{t}, X \textnormal{ is closed under} =^{\mt{t}}\}$. We note that, by predicative comprehension, $\mathcal{X}^{\mt{t}}\subseteq \mathcal{X}$ and the $(\mathcal{M},\mathcal{X})$-definable model $(\mathcal{M}^{\mt{t}}, \mathcal{X}^{\mt{t}})\models \sPC{T}$. Consider $F[\mathcal{N}_i]\subseteq \mathcal{M}^{\mt{t}\mt{s}}\subseteq \mathcal{M}^{\mt{t}}$. Since $F[\mathcal{N}_i]$ is $\mathcal{M}$-definable, $F[\mathcal{N}_i]\in \mathcal{X}$. Since $F[\mathcal{N}_i]$ is invariant under $=^{\mt{s}}$ and $=^{\mt{s}}$ is invariant under $=^{\mt{t}}$ we get that $F[\mathcal{N}_i]\in \mathcal{X}^{\mt{t}}$. We observe that $F[\mathcal{N}_i]$ is an $\mathcal{X}^{\mathtt{t}}$-strong model of $\tau$. Hence there exists an isomorphism $H: F[\mathcal{N}_0]\rightarrow F[\mathcal{N}_1]$ such that $H\in \mathcal{X}^{\mt{t}}.$ Hence $H\in \mathcal{X}$ and $F^{-1}\circ H\circ F\in\mathcal{X}$ is an isomorphism between $\mathcal{N}_0$ and $\mathcal{N}_1$.

The argument for (3) follows the same pattern as the above argument, but alongside with $\mathcal{N}_i$'s we fix also a bijection $G:\mathcal{N}_0\rightarrow \mathcal{N}_1$, $G\in\mathcal{X}$. Then we observe that $F[G]$ is a bijection between $F[\mathcal{N}_0]$ and $F[\mathcal{N}_1]$ which exists in $\mathcal{X}^{\mt{t}}$. So indeed, in $(\mathcal{M}^{\mt{t}}, \mathcal{X}^{\mt{t}})$ there exists an isomorphism $H: F[\mathcal{N}_0]\rightarrow F[\mathcal{N}_1]$, which generates an isomorphism between $\mathcal{N}_0$ and $\mathcal{N}_1$ in $\mathcal{M}$.
\end{proof}

\subsection{Separations}\label{sect:ground_sep}

Let us start by noting that we have the following obvious implications for a scheme $\sigma$ and a collection of schemes $S$:

\begin{enumerate}
    \item If $\sigma$ is standalone $\Phi$-definite (in every cardinality) in $S$, then $\sigma$ is $\Phi$-definite (in every cardinality) in $S$.
    \item If $\sigma$ is (standalone) $\Phi$-definite in $S$, then $\sigma$ is (standalone) $\Phi$-definite-in-every-cardinality in $S$. 
    \item If $\sigma$ is (standalone) categorical in $S$, then $\sigma$ is (standalone) complete in $S$.
    \item If $\sigma$ is (standalone) $\Phi$-definite [in every cardinality] in $S[\mathcal{L}_S]$, then $\sigma$ (standalone) $\Phi$-definite [in every cardinality] in $S$.
\end{enumerate}

Our results in this paper provide counterexamples to invertibility of each of the above implications. To make this long subsection more perspicuous, we isolate further subsections, corresponding in order to the numbers if the implications in the above list.

\subsubsection{With or without parameters?}

For starters, we consider the failure of the reverse implication from (1) and we put it into a more general context. As discussed in the Introduction, our definiteness criteria derive from the notions studied by V\"a\"ananen and Button and Walsh (and much earlier by Feferman and Hellman, \cite{Feferman_Hellman}). One could expect that the definition of strong internal categoricity from \cite{ena_lel} is just a special case of the current one. There is however a subtle difference, which appears already while comparing the two preceding notions of second-order and strong internal categoricity. Namely, just as the predicative comprehension used in this paper, second-order comprehension allows for formulae with both first- and second-order parameters to be substituted for the predicate letter $P$. As a consequence, in order to conclude that a scheme $\tau$ is second-order internally categorical, it is enough to show  that, working in a Henkin model for second-order logic, for any two strong models of $\tau$, there is a second-order parametrically definable isomorphism between them and the parameters involved in the definition of the isomorphism need not be in any way related with the two models. The definition of of strong internal categoricity is more uniform: the formula $\alpha$ from Definition 44(c) may use only the resources of $\mathcal{L}^{\mathsf{duo}+}$. A more precise estimation of proximity between the two notions is given in the next theorem.

\begin{thm}\label{thm_parameters}
\samepage{\ 

\begin{enumerate}
    \item If a scheme $\tau$ is strongly internally categorical, then $\tau$ is categorical in $\tau[\Lang[\tau]]$.
    \item There is a scheme $\tau$ which is not strongly internally categorical, but it is both categorical in $\mathsf{AS}$ and second-order internally categorical.
    \item  If $\tau$ is standalone categorical in $\tau[\mathcal{L}_{\tau}]$, then $\tau$ is strongly internally categorical.
\end{enumerate}
}
\end{thm}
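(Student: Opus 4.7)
The plan for (1) and (3) is to build a correspondence between the duo$+$ framework of \cite{ena_lel} and our $\mathrm{(s)PC}$-based framework: two strong models of $\tau$ can be read off as the two copies in a duo$+$ structure, and conversely, a model of the duo$+$ theory naturally supplies a model of $\mathrm{(s)PC}[\tau[\Lang[\tau]]]$ in which the two copies serve as strong models. The standalone vs.\ non-standalone distinction on the $\mathrm{(s)PC}$ side -- whether first-order parameters are admitted -- matches the parameter-free vs.\ parametric distinction on the duo$+$ side, and this is what accounts for the slight asymmetry between the hypotheses in (1) and (3).

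For (1), I would start with $(\mathcal{M},\mathcal{X})\models\PC{\tau[\Lang[\tau]]}$ and two $\mathcal{X}$-strong models $\mathcal{M}_1,\mathcal{M}_2$ of $\tau$; by Lemma \ref{lem_abs_equal} I may assume equality on each $\mathcal{M}_i$ is absolute. Assembling the $\mathcal{M}_i$ into a duo$+$ structure and verifying that it models the duo$+$ theory for $\tau$ (a routine check using $\mathcal{X}$-strongness of each $\mathcal{M}_i$), strong internal categoricity supplies a formula $\alpha\in\mathcal{L}^{\mathsf{duo}+}$, with no external parameters, defining the required isomorphism. Translating $\alpha$ back into $\Lang[\tau]$ -- using $\mathcal{M}_1,\mathcal{M}_2$ as set parameters available in $\mathcal{X}$, and, if $\mathcal{L}^{\mathsf{duo}+}$ contains internal constants, lifting those to first-order parameters from $\mathcal{M}$, which is precisely what $\mathrm{PC}$ (as opposed to $\mathrm{sPC}$) allows -- yields the desired isomorphism in $(\mathcal{M},\mathcal{X})$.

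For (3), I would run the argument in reverse. Given a duo$+$ model $\mathcal{A}$ with two copies $\mathcal{N}_1,\mathcal{N}_2$ of $\Lang[\tau]$, form the expansion $(\mathcal{A},\mathcal{Y})$ where $\mathcal{Y}$ collects the subsets of $\mathcal{A}$ parameter-free-definable with $\mathcal{N}_1,\mathcal{N}_2$ as set parameters. Then $(\mathcal{A},\mathcal{Y})\models\sPC{\tau[\Lang[\tau]]}$ and each $\mathcal{N}_i$ is $\mathcal{Y}$-strong. Standalone categoricity then provides a single $\mathrm{sPC}$-formula in the parameters $\mathcal{N}_1,\mathcal{N}_2$ defining an isomorphism $\mathcal{N}_1\to\mathcal{N}_2$ -- a compactness argument analogous to the one following the definition of isomorphic interpretations in Section 2.3 shows that a single such formula suffices. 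Since $\mathcal{N}_1,\mathcal{N}_2$ are already named in $\mathcal{L}^{\mathsf{duo}+}$, this formula transcribes into an $\mathcal{L}^{\mathsf{duo}+}$-formula, witnessing strong internal categoricity.

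For (2), the separating scheme $\tau$ should exploit the parameter gap: its strong models should be rigid only modulo a ``free choice'' whose resolution requires a first-order parameter. A natural candidate is a categorical base (e.g.\ $\PA$ with the induction scheme) augmented by an essentially unconstrained additional symbol -- say a unary $Q$ with no axioms imposed -- so that categoricity in $\AS$ and second-order internal categoricity follow by using an encoding of $Q$ as, respectively, a first-order or second-order parameter to align the two $Q$-expansions, while the failure of strong internal categoricity is obtained from a homogeneity argument in the pure duo$+$ structure showing that no parameter-free formula uniformly matches up the $Q$'s across the two copies. The main obstacle is making the translation between $\mathcal{L}^{\mathsf{duo}+}$ and $\mathrm{(s)PC}[\tau[\Lang[\tau]]]$ genuinely precise -- ensuring that the ``model transfer'' preserves strongness in both directions and that the bookkeeping of admissible parameters matches -- while for (2) the delicate point is the explicit verification that no duo$+$-definable isomorphism exists for the proposed scheme.
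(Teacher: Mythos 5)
Your treatment of (1) and (3) matches the paper's in essence: for (1) you assemble the two $\mathcal{X}$-strong models into a duo$+$ structure, pull back the parameter-free $\mathcal{L}^{\mathsf{duo}+}$-formula $\alpha$, and observe that the resulting isomorphism lies in $\mathcal{X}$ because $\mathcal{N}_0,\mathcal{N}_1$ are admissible set parameters for comprehension; for (3), once your ``single formula suffices'' step is unpacked, it is exactly the paper's compactness/contrapositive argument (a model of $\tau^{\mathsf{duo}+}$ plus ``no $\alpha$ defines an isomorphism'', expanded by its parameter-free definable sets, refutes standalone categoricity). Those two parts are fine, modulo the unnecessary (and, absent a sequentiality hypothesis on $\tau$, not obviously licensed) appeal to Lemma \ref{lem_abs_equal} in (1).

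The genuine gap is in (2): your candidate scheme does not work. If you take a base that is categorical because its strong models are \emph{rigid} -- such as $\PAm+\Ind$, where the isomorphism between any two strong models is unique -- and adjoin an unconstrained unary predicate $Q$, then two strong models whose $Q$-interpretations are not matched by that unique isomorphism (say, the evens in one copy and the odds in the other) are simply \emph{not isomorphic} as $\Lang[\PA]\cup\{Q\}$-structures. No choice of first- or second-order parameter can ``align the two $Q$-expansions'', because there is no freedom left in the underlying isomorphism to exploit. So your $\tau$ fails to be categorical in $\AS$ (indeed fails every categoricity notion in the paper), and cannot witness the separation. The example has to be built the opposite way: a scheme whose strong models are \emph{homogeneous but not rigid}, so that an isomorphism exists for every choice of anchor point but no canonical anchor is definable without a first-order parameter. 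The paper uses the scheme of $(\bbZ,\leqs)$ with upward and downward induction: categoricity in $\AS$ is proved via $\tuple{a,b}$-pointed partial isomorphisms (the parameters $a,b$ being exactly what $\PC{\AS}$ supplies and strong internal categoricity forbids), and the failure of strong internal categoricity is witnessed by an Ehrenfeucht--Mostowski model of $\ZFC+V{=}\mathrm{HOD}$ generated by indiscernibles of order type $\bbZ\oplus\bbZ$, where a shift automorphism of one copy fixing the other shows that no parameter-free formula can define the matching. Your closing remark about a ``homogeneity argument'' points in the right direction, but it is incompatible with the rigid candidate you propose, and the indiscernibility construction needed to realize it is the substantive content of this part of the theorem.
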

\begin{proof}
For (1) assume that $\tau$ is strongly internally categorical and fix any model $(\mathcal{M},\mathcal{X})\models \PC{\tau[\Lang[\tau]]}$ and any strong models of $\tau$,$\mathcal{N}_0,\mathcal{N}_1\in \mathcal{X}$. Fix any formula $\alpha(x,y)\in\mathcal{L}^{\mathsf{duo}+}$ which defines the isomorphism between the red and the blue model in an arbitrary model of $\tau^{\mathsf{duo}+}$. Observe that $\mathcal{M}'=(\mathcal{M},\mathcal{N}_0,\mathcal{N}_1)$ is a model of $\tau^{\mathsf{duo}+}$. Hence $\alpha$ defines an isomorphism between $\mathcal{N}_0$ and $\mathcal{N}_1$ in $\mathcal{M}'$. Since $\mathcal{N}_0$ and $\mathcal{N}_1$ are parameters from $\mathcal{X}$, the isomorphism defined by $\alpha$ also is a member of $\mathcal{X}$.

For (2), we consider the canonical second order theory of $(\mathbb{Z},\leq)$. For definiteness, our scheme $\tau$ consits of the following formulae
\begin{itemize}
    \item "$\leq$ is a discrete linear order in which every element has a successor and a predecessor."
    \item $\forall x\bigl(P(x) \wedge \forall z\geq z (P(z)\rightarrow P(\mathsf{S}(z)))\rightarrow \forall z(z\geq x\rightarrow P(z)).$
    \item $\forall x\bigl(P(x) \wedge \forall z\leq z (P(z)\rightarrow P(\mathsf{P}(z)))\rightarrow \forall z(z\leq x\rightarrow P(z)).$
\end{itemize}
In the second and third bullet points, the predicates $\mathsf{P}$ and $\mathsf{S}$ express the $\leq$-definable functions of successors and predecessors. We show that $\tau$ is categorical in $\mathsf{AS}$ (hence in any sequential theory). Fix any model of $(\mathcal{M},\mathcal{X})\models \PC{\mathsf{AS}}$ and any $\mathcal{N}_1,\mathcal{N}_2\in\mathcal{X}$ which are strong models of $\tau$ (in the sense of $(\mathcal{M},\mathcal{X})$). Fix $a\in \mathcal{N}_1$ and $b\in\mathcal{N}_2$. By Lemma \ref{lem_abs_equal}, we can assume that both $\mathcal{N}_1$ and $\mathcal{N}_2$ have absolute equality. We say that a set $y$ is a $\tuple{a,b}$ pointed partial isomorphism if 
\begin{enumerate}
    \item $y$ encodes a partial bijection $\mathcal{N}_0\rightarrow \mathcal{N}_1$ and
    \item the domain of $y$ has the $\leq$-least and $\leq$-greatest element and
    \item if $z\in\dom(y)$ is not minimal, then $\mathsf{P}(z)\in\dom(y)$ and for every $w$, $\tuple{z,w}\in y$ iff $\tuple{\mathsf{P}(z),\mathsf{P}(w)}\in y$ and
    \item if $z\in \mathrm{dom}(y)$ is not maximal, then $\mathsf{S}(z)\in \dom(y)$ and for every $w$, $\tuple{z,w}\in y$ iff $\tuple{\mathsf{S}(z),\mathsf{S}(w)}\in y$ and
    \item $\tuple{a,b}\in y$.
\end{enumerate}
Thanks to sequentiality, the above can be written as a $\Lang[\mathsf{AS}]$-formula with parameters $\mathcal{N}_1$, $\mathcal{N}_2$. We remind the reader that $\tuple{\cdot,\cdot}$ is not provably functional in $\AS$, so each time $\varphi(\tuple{x,y})$ occurs in this proof it should be read "There is a $z$ which is \textit{a} Kuratowski-pair of $x,y$ and $\varphi(z)$." (note that, thanks to the adjunction axiom, provably in $\mathsf{AS}$, for every $x,y$ a Kuratowski-pair of $x,y$ exists). Consider the following formula $\varphi(x,y)\in\mathcal{L}_{\in}$ which says "There is a $\tuple{a,b}$-pointed partial isomorphism $z$ such that $\tuple{x,y}\in z$". Since $\varphi(x,y)$ is a $\Lang[\mathsf{AS}]$ formula with parameters, there is $F\in \mathcal{X}$ whose members are precisely the pairs satisfying $\varphi(x,y)$. We claim that $F$ is an isomorphism between $\mathcal{N}_1$ and $\mathcal{N}_2$. The rest of conditions being simpler, we show how to check that $F$ preserves the ordering. So fix an arbitrary $x\in \mathcal{N}_1$ and consider $y=F(x)$. Let $X\subseteq \mathcal{N}_1$ be the set of those $z$'s such that $x\leq z \leftrightarrow F(x)\leq F(z).$ Since $x$ was arbitrary, it is enough to check that $X$ contains all the elements greater than $x$. Then clearly $X$ is nonempty. We shall check that it closed under the successor. So suppose $z\in X$ and fix an $\tuple{a,b}$ pointed isomorphism $y$ such that $\tuple{z,F(z)}\in y$. Assume that $x\leq z$. Then $y'=y\cup \{\tuple{\mathsf{S}(z),\mathsf{S}(F(z))}\}$ is an $\tuple{a,b}$-pointed partial isomorphism and so we have $F(\mathsf{S}(z)) = \mathsf{S}(F(z))$. Since $x\leq z$ ,then $F(x)\leq F(z)$ ($z\in X$) and also $x\leq \mathsf{S}(z)$ and $F(x)\leq \mathsf{S}(F(z)) = F(\mathsf{S}(z))$. If $F(x)\leq F(z)$ we argue analogously. So it follows by induction that for every $z\geq x$, $z\in X$. 

It remains to be shown that $\tau$ is not strongly internally categorical. We shall demonstrate something stronger: we shall show that there is a model $\calM$ of $\ZFC$ with two classes $\calN_1$, $\calN_2$ which are strong models of $\tau$ with respect to all \textit{parametrically definable subsets} of $\calM$ and such that the isomorphism between $\calN_1$ and $\calN_2$ is not parameter-free definable in $(\calM, \calN_1, \calN_2)$. By the classical Ehrenfeucht-Mostowski result (see \cite[Theorem 3.3.10]{Chang_Keisler} there is a model $\mathcal{M}\models\ZFC + V=HOD$ with order indiscernibles $I$ of type $\mathbb{Z}\oplus\mathbb{Z}$ (in the sense of the global well-ordering of the universe). By passing to its elementary submodel, we can assume that every element of $\mathcal{M}$ is definable from indiscernible elements. More precisely, we can replace $\mathcal{M}$ with $K(I)$ (see \cite[Theorem 2.2.2]{Enayat_set_indiscernibles}). Then any permutation of indiscernibles preserving their ordering in the order-type $\mathbb{Z}$ uniquely determines an automorphism of the whole structure $\mathcal{M}$. Let $\mathcal{N}_i$, for $i\in\{1,2\}$ be the subset of $\mathcal{M}$ consisting of the elements of the $i$-th copy of $\mathbb{Z}$-ordered indiscernibles, together with the canonical ordering in the order type of $\mathbb{Z}$. Consider $\mathcal{N} = (\mathcal{M}, \mathcal{N}_1, \mathcal{N}_2)$ and let $\mathcal{X}$ be the set of first-order parameter-free definable subsets of $\mathcal{N}$. Then (by definition), $(\mathcal{N},\mathcal{X})\models \sPC{\ZFC}$. However, consider $\mathcal{N}_1$ and $\mathcal{N}_2$, which are, also by definition, elements of $\mathcal{X}$. Since $\mathcal{N}_i$ has the order type $\mathbb{Z}$, each of them is obviously a strong model of $\tau$. To show that there is no $F\in\mathcal{X}$ it is enough to show that for any $a\in\mathcal{N}_2$ there are $b,c\in \mathcal{N}_1$ such that $(\mathcal{N},a,b)\equiv (\mathcal{N},a,c)$. So fix $a\in\mathcal{N}_2$ and consider the automorphism of the indiscernibles which fixes the second copy of $\mathbb{Z}$ (hence also $\mathcal{N}_2$) and shifts the elements of $\mathcal{N}_1$ by one. By the choice of $\mathcal{M}$ this extends to an automorphism of $\mathcal{M}$ which fixes $\mathcal{N}_1$, $\mathcal{N}_2$ (setwise) and $a$. Hence this automorphism witnesses that $(\mathcal{N},a,b)\equiv(\mathcal{N},a,\mathsf{S}(b))$, for any $b\in\mathcal{N}_1$.

For $(3)$ assume that $\tau$ is not strongly internally categorical and consider the following $\Lang^{\mathsf{duo}+}$-theory
\[\tau^{\mathsf{duo}+} \cup \{\neg \mathsf{ISO}(\alpha(x,y), R,B) : \alpha(x,y)\in\calL^{\mathsf{duo+}}\},\]
where $\mathsf{ISO}(\alpha(x,y), R,B)$ expresses that $\alpha(x,y)$ defines an isomorphism between the red and the blue models. An easy compactness argument show that the above theory is consistent. Let $\mathcal{M}$ be its model and let $\mathcal{X}$ consist of first-order parameter-free definable sets of $\mathcal{M}$. It follows that $(\mathcal{M},\mathcal{X})\models \sPC{\tau[\Lang[\tau]]}$. Moreover the red and the blue models are elements of $\mathcal{X}$ and are both strong models of $\tau$. However, by the definition of $\mathcal{X}$ and the construction of $\mathcal{M}$ no element of $\mathcal{X}$ is an isomorphism between the red and the blue model.
\end{proof}

\subsubsection{Quasi or full categoricity?}

The most prominent example of a scheme which is categorical in every cardinality in a collection of schemes $S$ but not even complete in $S$, is the scheme providing  canonical axiomatization of $\ZF$: $\mathsf{BST} + \Repl(P)$. We leave the discussion of this and similar phemonena to Section 6. Now we provide an example of a scheme which separates standalone categoricity-in-every cardinality from standalone categoricity. 

 Since similarly motivated schemes will occur also in the next sections, let us introduce a handy piece of terminology:

\begin{defn}
Let $T$ be a finite theory in a language $\Lang[T]$. The scheme $\hf{T}$ is formulated in the language $\Lang[T]\cup\{\in,A\}$ (we tacitly assume that $\Lang[T]$ is disjoint from $\{\in,A\}$) and the conjunction of the following schemes:

\begin{enumerate}
    \item the replacement scheme $\Repl(P)$,
    \item the scheme of $\in$-induction,
    \item the following version of the axiom of powerset
    \[\forall x \exists y\bigl(\forall z (z\in y \equiv \neg A(z) \wedge\forall w (w\in z \rightarrow w\in x))\bigr),\]
    \item the following version of the axiom of union
    \[\forall x\exists y \bigl(\neg A(y)\wedge \forall z \bigl(z\in y\equiv \exists w\in x\, z\in x\bigr)\bigr).\]
    \item the axiom saying that for every set $x$ there exists a natural number $n$ and a bijection $f:n\rightarrow x$,
    \item the axiom of extensionality, relativized to elements satisfying $\neg A(x)$.
    \item $\forall x (A(x)\rightarrow \forall y\,\neg y\in x).$
    \item the axioms of $T$ relativised to $A$.
\end{enumerate}  
\end{defn}

We note that it would make sense to define $\hf{T}$ when $T$ is an infinite collection of schemes, but this generality won't be ever used in this paper.

\begin{thm}\label{thm_standalone_quasi_not_full_cat}
There is a scheme $\tau$ such that $\tau$ is standalone categorical in every cardinality in the scheme $\mathsf{BST}+\mathsf{V=HOD}+\Repl(P)$ but $\tau$ is not categorical $\mathsf{BST}+\mathsf{V=HOD}+\Repl(P)$.
\end{thm}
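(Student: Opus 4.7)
Let $T_{0}$ be the trivial theory in the empty language. Define $\tau$ as the scheme $\hf{T_{0}}$ augmented with the first-order $\Lang[\hf{T_{0}}]$-sentence
\begin{equation*}
\forall n\,\bigl(n\in\omega\rightarrow\lnot\exists f\,(f\text{ is a bijection from $n$ onto the extension of }A)\bigr)\text{,}
\end{equation*}
whose effect is to force the atom predicate $A$ to denote an infinite class. Strong models of $\tau$ are then, up to isomorphism, precisely the hereditarily finite set structures built over an infinite set of structureless atoms. This will allow the cardinality of a strong model to pin down the cardinality of its atom set, which is what the categoricity-in-every-cardinality argument needs, while still permitting strong models of distinct cardinalities to coexist in an ambient model.

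\emph{Standalone categoricity in every cardinality in $S=\mathsf{BST}+\mathsf{V=HOD}+\Repl(P)$.} The plan is to work in $(\calM,\calX)\models\sPC{S}$, fix two $\calX$-strong models $\calN_{1},\calN_{2}$ of $\tau$ linked by a class bijection $F\in\calX$, and exhibit an isomorphism in $\calX$. For $i\in\{1,2\}$ let $A_{i}:=\{x\in\calN_{i}\,:\, A(x)\}\in\calX$. Since every set in $\calN_{i}$ is finite (axiom~(5) of $\hf{T_{0}}$) while $A_{i}$ is infinite, cardinal arithmetic internalised in $\sPC{S}$ via $\Repl(P)$ yields $|\calN_{i}|=|A_{i}|$ in $\calM$. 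Hence $|A_{1}|=|A_{2}|$, and $\AC$ (a consequence of $\mathsf{V=HOD}$) provides a class bijection $g\colon A_{1}\to A_{2}$ in~$\calX$. Define a class function $h\colon\calN_{1}\to\calN_{2}$ by $\in$-recursion: $h(x)=g(x)$ if $A(x)$, and $h(x)=\{h(y)\,:\, y\in x\}$ otherwise; I will use $\calN_{1}$ and $g$ as class parameters, and $\Repl(P)$ to secure boundedness of images. A short $\in$-induction verifies that $h$ is an isomorphism.

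\emph{Failure of categoricity in $\PC{S}$.} The plan here is to construct a single model $(\calM,\calX)\models\PC{S}$ carrying two non-isomorphic $\calX$-strong models of $\tau$. Take $\calM$ to be any transitive model of $\ZFC+\mathsf{V=HOD}$ in which $\omega_{1}$ is genuinely uncountable, and let $\calX$ be the collection of all definable (with first-order parameters) subclasses of $\calM$; this gives $(\calM,\calX)\models\PC{S}$ directly. Let $\calN_{1}$ be the canonical interpretation in $\calM$ of the hereditarily finite structure with atom set $\omega^{\calM}$, and $\calN_{2}$ the analogous structure with atom set $\omega_{1}^{\calM}$. Both are parameter-free definable in $\calM$, hence lie in $\calX$, and each is an $\calX$-strong model of $\tau$: $\in$-induction and replacement are immediate because every set in $\calN_{i}$ is finite, and the infinity axiom for atoms holds since $\omega^{\calM}$ and $\omega_{1}^{\calM}$ are infinite in $\calM$. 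However, $|\calN_{1}|=\aleph_{0}$ while $|\calN_{2}|=\aleph_{1}^{\calM}$, so no bijection---and \emph{a fortiori} no isomorphism---between them exists in $\calX$. Thus $\PC{S}$ does not prove categoricity of~$\tau$.

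The principal technical obstacle is the $\in$-recursion in Step~1 within $\sPC{S}$: one must confirm that the recursively specified $h$ is actually an element of~$\calX$, not just a ``virtual'' function. This is expected to go through because each $x\in\calN_{1}$ is finite, so $\{h(y)\,:\, y\in x\}$ is the image of a finite set under a class function and is a set by $\Repl(P)$; the standard uniqueness argument for well-founded recursion, formalised with $\calN_{1}$ and $g$ as class parameters, then yields $h$ as a predicatively comprehended class in~$\calX$. A secondary subtlety is verifying that the first-order axiom ``$A$ is infinite'' behaves as expected inside $\calN_{i}$, which reduces to the fact that any ``internal'' bijection $f\colon n\to A_{i}$ in $\calN_{i}$ is an external finite bijection, hence contradicts $|A_{i}|\geqs\aleph_{0}$.
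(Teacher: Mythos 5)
Your scheme is the paper's scheme (the paper uses $\hf{\emptyset}$ together with $\forall x\exists a(A(a)\wedge a\notin x)$, which is interchangeable with your ``$A$ is not a finite set'' axiom given that all sets are finite), and both halves of your plan follow the paper's route: transport a bijection between the atom classes to an isomorphism by $\in$-recursion on rank, and refute categoricity with two hereditarily finite hierarchies over atom sets of different cardinalities. The second half is fine as stated, except that the transitive model is superfluous: in any $\calM\models\ZFC+\mathsf{V=HOD}$ a definable class bijection between the two set-sized structures would be a set by replacement and hence an internal bijection between sets of different cardinalities; the paper in fact notes that $\ZF$ outright proves the existence of two non-isomorphic strong models, giving non-categoricity rather than mere non-provability.

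The step that does not survive scrutiny as written is ``cardinal arithmetic \ldots{} yields $|\calN_{i}|=|A_{i}|$, hence $|A_{1}|=|A_{2}|$, and $\AC$ provides a class bijection $g\in\calX$''. Nothing in $\tau$ forces the domain of a strong model to be a set of the ambient model: the hereditarily finite sets over $\Ord$ taken as atoms form a proper-class strong model, and for proper classes neither $|A_{i}|$ nor plain $\AC$ means anything. The paper splits into two cases. If the $A_{i}$ are sets, the bijection $F$ forces $\calN_{1},\calN_{2}$ (hence $A_{1},A_{2}$) to be equinumerous, and one should then take, say, the $\mathsf{HOD}$-least bijection between them so that $g$ is definable from the class parameters $A_{1},A_{2}$ alone --- this matters because standalone comprehension admits only class parameters, so an arbitrary set bijection supplied by $\AC$ need not lie in $\calX$. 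If the $A_{i}$ are proper classes, one builds $G$ by transfinite recursion along $\Ord$ using the global well-ordering given by $\mathsf{V=HOD}$. Both repairs use only resources already present in your ambient scheme, so the proof goes through once this case distinction is added.
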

\begin{proof}
Let $\tau$ extend the scheme $\hf{\emptyset}$ with the statement $\forall x\exists a(A(a)\wedge a\notin x)$. Thus the intended models of $\tau$ are simply the hierarchy of infinite sets over an infinite collection of atoms. Clearly already $\ZF$ can prove that there exists two non-isomorphic strong models of $\tau$: we just construct the hierarchies of hereditarily finite sets over atoms of two different cardinalities. We check that $\tau$ is standalone categorical in every cardinality in $\mathsf{BST}+\mathsf{V=HOD}+\Repl(P)$. Fix any $(\mathcal{M},\mathcal{X})\models \sPC{\mathsf{BST}+\mathsf{V=HOD}+\Repl(P)}$, let $\mathcal{N}_1$, $\mathcal{N}_2$ be two $\mathcal{X}$-strong models of $\tau$ and $F\in\mathcal{X}$ be the bijection between $N_1$ and $N_2.$ By the Scott's trick, we can assume that $\mathcal{N}_i$'s have absolute equality (see also the proof of Lemma \ref{lem_comp_mod_of_ZF}) By the use of replacement scheme in $(\mathcal{M},\mathcal{X})$ and $\varepsilon$-induction scheme in $\mathcal{N}_i$ we know that the natural numbers of $\mathcal{M}$ and $\mathcal{N}_i$'s are all isomorphic. Let $A_i$ be the set of atoms in $\mathcal{N}_i$.  By the predicative comprehension in $(\mathcal{M},\mathcal{X})$ we know that $A_i\in \mathcal{X}$. We argue that there is a bijection $G\in\mathcal{X}$, $G:A_1\rightarrow A_2$. If $A_i$ forms a set in $\mathcal{M}$, then so does $N_i$ and actually $N_i$ and $A_i$ have the same cardinality. Since $N_1$ and $N_2$ have the same cardinality, so do $A_1$ and $A_2$. If $A_1$ is a proper class in $\mathcal{M}$, then so does $A_2$ an by the global well-ordering of the universe we can actually define $G$ by transfinite induction along the ordinals. Once we have $G$, we can now extend it via a routine rank recursion. 
\end{proof}

\subsubsection{Categoricity of completeness? Scheme or theory?}

In this section we deal with the last two implications from our initial list. The first two theorems witness the failure of the reversal of the third one, while the next two do the same for the last one. Since we can actually use one and the same scheme in both cases, we decided to group the two points together.

For the rest of this section we work with a scheme $\tau := \hf{\mathrm{DLO}}$, where $\mathrm{DLO}$ is the theory of dense linear orders without endpoints. Let us first observe that the following are provable in $\tau[\Lang[\tau]]$:

\begin{enumerate}
    \item[A] "Every set of atoms is discretely ordered by $<$ and has the least and the greatest element".
    \item[B] (the existence of transitive closures) "For every set $x$ there is the least transitive set $y$ (which is not an atom), such that $x\subseteq y$."
\end{enumerate}
A can be proved by a routine induction on the cardinality of the given $x$ containing only atoms. This is legal by axiom $(5)$ and $\in$-induction. B is a routine argument using $\in$-induction and replacement.

The first two theorems witness that the third implication from our list does not reverse.

\begin{thm}\label{thm:standalone_complete}
$\tau$ is standalone complete in $\mathsf{AS}$
\end{thm}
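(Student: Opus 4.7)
The plan is to show, for each fixed $\Lang[\tau]$-sentence $\alpha$, that any two $\mathcal{X}$-strong models of $\tau$ agree on $\alpha$. I will proceed by a back-and-forth argument, exploiting the observation that the atoms of any strong model of $\tau$ form a dense linear order without endpoints (immediate from axiom (8) of $\hf{\mathrm{DLO}}$).

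First I would invoke the straightforward completeness-analogue of Lemma \ref{lem_abs_equal} to assume that the two strong models $\calN_1, \calN_2$ under consideration carry absolute equality. Then, working inside an arbitrary $(\calM, \mathcal{X}) \models \sPC{\mathsf{AS}}$ with $\mathcal{X}$-strong models $\calN_1, \calN_2$ of $\tau$ having atom sets $A_1, A_2$, I would define the family $\mathcal{I}$ of finite partial maps $\pi : F_1 \to F_2$ where each $F_i$ is an $\in$-transitive finite subset of $\calN_i$, $\pi$ is a bijection preserving $\in$ and $A$, and $\pi$ restricted to atoms is $<$-preserving. This family is definable in $\sPC{\mathsf{AS}}$ from the set-parameters $\calN_1, \calN_2$, using that every element of $\calN_i$ has finite transitive closure by axiom (5).

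The technical heart of the argument is to verify that $\mathcal{I}$ has the back-and-forth property: for every $\pi \in \mathcal{I}$ and every element $x$ on either side, $\pi$ extends to some $\pi' \in \mathcal{I}$ whose domain or range contains $x$. When the new element is an atom, this is the classical back-and-forth for $\mathrm{DLO}$: the already-matched atoms partition $A_1, A_2$ into matching intervals, and density together with the absence of endpoints permit matching any new atom. When the new element $x$ is a non-atom, I would first adjoin all of $\tc(x)$ to $F_1$, match the finitely many new atoms appearing in $\tc(x)$ by iterating the atom back-and-forth inside $A_2$, and then extend $\pi$ to the non-atom elements of $\tc(x)$ by mapping each new set $z$ to the unique element of $\calN_2$ whose members are the $\pi$-transported members of $z$; the existence and uniqueness of such an image is secured by axioms (3), (4) and (6).

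The main obstacle will be this non-atom step: one must verify that for each new non-atom $z$ the required image set $\{\pi(w) : w \in z\}$ genuinely exists in $\calN_2$, and that no earlier atom-matching decision is thereby violated. Once the back-and-forth property is established, a routine induction on the quantifier depth of $\alpha$ (performed in the metatheory separately for each fixed $\alpha$, so that $\sPC{\mathsf{AS}}$ only ever has to verify a finite back-and-forth configuration) yields $\calN_1 \models \alpha \leftrightarrow \calN_2 \models \alpha$, with existential quantifiers witnessed via back-and-forth extensions.
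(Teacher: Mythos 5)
Your proposal follows essentially the same route as the paper's proof: both construct a back-and-forth system of partial isomorphisms defined on transitive closures (the paper's ``good'' functions), handle new atoms by the classical density argument for $\mathrm{DLO}$, extend to non-atoms by $\in$-recursion using internal finiteness, and conclude elementary equivalence one fixed sentence at a time. The only detail to adjust is at your self-identified ``main obstacle'': the paper secures the existence of the image set $\{\pi(w)\colon w\in z\}$ not from axioms (3), (4), (6) but from the replacement scheme inside $\calN_2$ together with the standalone categoricity of the induction scheme in $\AS$, which identifies the finite ordinals of the two strong models and lets the image be formed as the range of a class function on an internal natural number.
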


\begin{proof}

We prove $(1)$. Fix any model $(\mathcal{M},\mathcal{X})\models \sPC{\mathsf{AS}}$ and any strong models $\mathcal{N}_0\in\mathcal{X}$, $\mathcal{N}_1\in\mathcal{X}$ of $\tau$. We observe that by Lemma \ref{lem_abs_equal} we can assume that the identity relations on $\mathcal{N}_i$'s are just the identity in $\mathcal{M}$. The idea of the proof is to define a winning strategy for Duplicator ($\exists$) in the Ehrenfeucht-Fra\"isse game in $\mathcal{N}_1$, $\mathcal{N}_2$. The strategy is roughly as follows: if Spoiler ($\forall$) plays with $x\in \mathcal{N}_i$, first find new atoms in $\mathtt{tc}(x)\cap A_i$ and select some corresponding atoms from $A_{1-i}$ (this mimics Cantor's back-and-forth argument). Then keeping a partial isomorphism between atoms, construct a set $y$ using the set-theoretical structure of $x$. We need some preparations. In what follows the $\mathcal{N}_i$'s predicates will be indexed with a lower index $i$. In particular, $\mathbb{N}_i$ denote the set of ordinals of $\mathcal{N}_i$. Observe that by $\varepsilon$ induction and axiom (5) $\mathbb{N}_i\in \mathcal{X}$ is a strong model of the induction scheme. Since the induction scheme is standalone categorical in $\mathsf{AS}$, there is an isomorphism $F:(\mathbb{N}_0, \in_0)\rightarrow (\mathbb{N}_1, \in_1)$. The following lemma will be useful later on:
\begin{lem}
Suppose that $G\in \mathcal{X}$ is a partial function $G:\mathcal{N}_0\rightarrow \mathcal{N}_1$. Then for every $x\in \mathcal{N}_0$ there is a partial function $f\in\mathcal{M}$ such that $\dom(f)\cap x = \dom(G)\cap x$ and for every $y\in x\cap \dom (f)$, $f(y) = G(y)$.
\end{lem}
\begin{proof}
Fix $G$ and $x$ as above. By axiom $(5)$ we can assume that $x = \{y_0,\ldots, y_{k-1}\}$ for some $k\in \mathbb{N}_0$. Now the existence of $f$ can be proved by $\varepsilon$-induction on $i<k$, using the adjunction axiom.
\end{proof}
Given $x\in \mathcal{N}_0$ and $y\in \mathcal{N}_1$, we shall say that a function $f: \mathsf{tc}_{0}(x)\rightarrow \mathsf{tc}_{1}(y)$ is \emph{good} if $f\in \mathcal{M}$ is an isomorphism of structures
\[f: (\mathsf{tc}_0(x), \in_{0}, A_0, <_0)\rightarrow (\mathsf{tc}_1(y), \in_1, A_1, <_1).\]
We stress that $f$ being good implies that $f$ belongs to the first-order universe of $\mathcal{M}$. We claim that the set $I$ of those pairs $\tuple{\bar{a},\bar{b}}\in \mathcal{N}_{0}^{<\omega}\times \mathcal{N}_1^{<\omega}$ such that there is a good function $f: \mathsf{tc}_0(\{a_1,\ldots, a_n\})\rightarrow \mathsf{tc}_{1}(\{b_1,\ldots, b_n\})$ is a back-and-forth system between $\mathcal{N}_0$ and $\mathcal{N}_1$. Clearly, if $\tuple{\bar{a},\bar{b}}\in I$, then the quantifier free-types of $\bar{a}$ and $\bar{b}$ are the same. We prove the \textit{forth} condition, the \textit{back} condition being fully symmetric. Let $\bar{a}$ abbreviate $a_1,\ldots,a_n$ and $\bar{b}$ abbreviate $b_1,\ldots,b_n$. So assume that $f:\mathsf{tc}_{0}(\{\bar{a}\})\rightarrow\mathsf{tc}_1(\{\bar{b}\})$ is good and consider an arbitrary $a\in \mathcal{N}_0$. Consider $c = A_0\cap_0 \mathsf{tc}_0(a)$ and $d = A_0\cap_0 \mathsf{tc}_0(\{\bar{a}\})$. Since both $c,d\in\mathcal{N}_0$, we know that $\mathcal{N}_0\models ``(c,<), (d,<) \textnormal{ are finite discrete orders}"$. By induction on $x$ in $\mathcal{N}_0$, we prove that for all $x$
\begin{center}
    if $c$ contains exactly $x$ elements, then there is a set $c'\in\mathcal{N}_1$ and an order isomorphism $g:(c\cup d, <_0)\rightarrow (c'\cup f[d], <_1)$.
\end{center}
Note that $f[d]$ is a set in $\mathcal{N}_1$, because it is simply $A_1\cap_1 \mathsf{tc}_1(\{\bar{b}\})$ (because $f$ is an $\in$-isomorphism which preserves atoms). The base step when $c$ is empty is trivial, so assume that $c$ has $x+1$ elements, pick any $e\in c$ and fix $c'$ and an isomorphism $f:(c\setminus \{e\}\cup d, <_0)\rightarrow (c'\cup f[d], <_1)$. By A $c\setminus\{e\}\cup d$ is discretely ordered by $<_0$ and has the least and the greatest elements. Hence there is $i\in \mathbb{N}_0$ such that $e$ is between the $i$-th and $i+1$-st element (for simplicity assume that the first and the last elements of $c\setminus\{e\}\cup d$ are $-\infty$ and $\infty$ respectively.) Call these elements $j,k$. Since $f$ is an isomorphism $f(j)<f(k)$ and by the density of $<_1$ there is $l\in A_1$ such that $f(j)<_1l<_1f(k)$. Let us fix any such $l$ and define $c'' = c'\cup d$ and $f' = f\cup\{\langle e,l\rangle\}$. This concludes the proof of the induction step. 

So let us fix a partial isomorphism $g: (c\cup d, <_0)\rightarrow (g[c\cup d], <_1).$ We say "$x$ is extensionally unique such that $\varphi(x)$" if $\varphi(x)$ holds and any any element satisfying $\varphi$ has the same elements as $x$.  By $\varepsilon$- induction we prove that
\begin{itemize}
    \item[$(*)$] for every set $x$ such that $A_0\cap_0 \mathsf{tc}_0(\bar{a},x) = c\cup d$ there is an extensionally unique good function
    \[h: \mathsf{tc}_{0}(\{\bar{a},x\})\rightarrow \mathsf{tc}_1(\{\bar{b},h(x)\})\]
    extending $f\cup g$.
\end{itemize}

Note that function $h$ in $(*)$ need not be unique up to the identity in $\mathcal{M}$, because of the lack of extensionality. However, because we do have extensionality for sets in $\mathcal{N}_1$, $h$ will be uniquely determined up to extensionality in $\mathcal{M}$.

In the base case $x$ is either an emptyset or an atom. In the former case, we put $h =f\cup g\cup \tuple{\emptyset_0,\emptyset_1}$. In the latter case we put $h=g$. 

So fix a set $y$ and assume that for every $x\in y$ $(*)$ holds. By axiom $(5)$ and the existence of $\in$-isomorphism between $\mathbb{N}_0$ and $\mathbb{N}_1$, $y$ is in bijection with $k\in\mathbb{N}_1$. So let $ \{x_0,\ldots,x_{k-1}\}$ be all the $\in_0$-elements of $y$. Consider the following class function $G\in\mathcal{X}$, $G:k\rightarrow \mathcal{N}_1$, defined $G(i) = h_i(x_i)$,
where $h_i$ is a (extensionally) unique good function \[h_i: \mathsf{tc}_{0}(\{\bar{a},x_i\})\rightarrow \mathsf{tc}_1(\{\bar{b},h_i(x_i)\}).\]
We note that each two $h_i$'s must agree on the intersections of their domains (by uniqueness). By the replacement scheme in $\mathcal{N}_1$ there is a set $z = G[k]$. We define a function $H: \mathsf{tc}_{0}(\{\bar{a},y\})\rightarrow \mathsf{tc}_1(\{\bar{b},z\})$ by putting 
\begin{equation*}
    H(a) = 
    \begin{cases*}
    z & if $a = y$\\
    h_i(a) & if $a\in \mathsf{tc}_0(\{\bar{a},x_i\})$ and $h_i\supseteq f\cup g$ is good. 
    \end{cases*}
\end{equation*}
By the previous comments on the uniqueness of $h_i$, $H$ is indeed a well-defined partial class function $\mathcal{N}_0\rightarrow \mathcal{N}_1$. Hence the restriction of $H$ to $\mathsf{tc}_0(\{\bar{a},y\})$ exists as a first order object in $\mathcal{M}$. This concludes our proof.

\end{proof}

\begin{thm}\label{thm:tau_non-categorical_in_cardinalities_inZF}
$\tau$ is standalone non-(categorical-in-every-cardinality) in $\ZF.$
\end{thm}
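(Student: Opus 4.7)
The plan is to work inside an arbitrary $(\mathcal{M},\mathcal{X})\models\sPC{\ZF}$ and parameter-freely construct two $\mathcal{X}$-strong models of $\tau$ of equal cardinality whose atom-DLOs are non-isomorphic inside $\mathcal{M}$. Since $\mathcal{M}\models\ZF$, I would let $B_1$ be $\mathbb{R}^{\mathcal{M}}$ with its usual order and $B_2$ be a parameter-free copy of $\mathbb{R}^{\mathcal{M}}\times\{0,1\}$ under the lexicographic order, so that $B_2$ realizes the ``concatenation'' $\mathbb{R}+\mathbb{R}$ and both are DLOs without endpoints. For each $i$, the structure $\mathcal{N}_i$ would be defined via a parameter-free coding of hereditarily finite sets over $B_i$ inside $\mathcal{M}$: atoms are encoded as pairs $(0,r)$ for $r\in B_i$, set-codes as pairs $(1,w)$, the relation $\in^{\mathcal{N}_i}$ is the decoded membership (so that codes of atoms are $\in^{\mathcal{N}_i}$-empty), $A^{\mathcal{N}_i}$ is the class of atom-codes, and $<^{\mathcal{N}_i}$ is the pullback of the order on $B_i$. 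Each $\mathcal{N}_i$ is parameter-free definable in $\mathcal{M}$, hence belongs to $\mathcal{X}$.

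The verification that $\mathcal{N}_i\strongs\tau$ is a routine pass through axioms (1)--(8) of $\hf{\mathrm{DLO}}$: $\in^{\mathcal{N}_i}$-induction follows because elements of $N_i$ have finite $\in^{\mathcal{N}_i}$-rank; replacement for any $Y\in\mathcal{X}$ holds on sets of $\mathcal{N}_i$ because such sets are finite and $\mathcal{M}$'s own replacement collects the finite image; finitary powerset, union, ``every set bijects with a natural number'', extensionality for non-atoms, and $\in^{\mathcal{N}_i}$-emptiness of atoms are all built into the coding; and the DLO axioms on $A^{\mathcal{N}_i}$ are inherited from $B_i$. For a bijection in $\mathcal{X}$, observe that $\ZF$ proves $|\mathbb{R}|=|\mathbb{R}\times\{0,1\}|$ via parameter-free injections --- say $x\mapsto(x,0)$ in one direction and $(x,i)\mapsto \arctan(x)+i\pi$ in the other --- together with the Cantor--Schr\"oder--Bernstein construction, yielding a parameter-free definable bijection $f\colon B_1\rightarrow B_2$ in $\mathcal{M}$; extending $f$ by $\in^{\mathcal{N}_i}$-recursion gives a parameter-free definable bijection $F\colon N_1\rightarrow N_2$, so $F\in\mathcal{X}$.

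To rule out an isomorphism, suppose for contradiction that some $G\in\mathcal{X}$ is an isomorphism $\mathcal{N}_1\rightarrow\mathcal{N}_2$. The crucial step is that, applying the class-replacement principle present in $\sPC{\ZF}$ to the class function $x\mapsto\langle x,G(x)\rangle$ with domain the set $N_1\in\mathcal{M}$, one concludes that the graph of $G$ is in fact a set of $\mathcal{M}$; hence the restriction of $G$ to $A^{\mathcal{N}_1}$ yields, after decoding, an order-preserving bijection $B_1\rightarrow B_2$ inside $\mathcal{M}$. But $\mathcal{M}$ proves that $\mathbb{R}$ is Dedekind-complete while $\mathbb{R}+\mathbb{R}$ has a middle gap (the Dedekind cut separating the two copies has no maximum on the left and no minimum on the right), and this property is preserved by order-isomorphism --- contradiction. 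The main obstacle I anticipate is precisely this last move: the witnessing isomorphism is a priori only a class in $\mathcal{X}$, and one must argue it is encoded as a set of $\mathcal{M}$ so that the Dedekind-completeness argument, which is internal to $\mathcal{M}$, can be brought to bear. A minor technical care is also required in the setup to keep the atom-codes disjoint from the ambient $\in$-elements of $\mathcal{M}$, so that $A^{\mathcal{N}_i}$ is genuinely $\in^{\mathcal{N}_i}$-empty.
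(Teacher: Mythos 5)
Your construction is essentially the paper's: inside a model of $\ZF$, build the hereditarily finite sets over two dense linear orders that are equinumerous but non\-/isomorphic, and use the order types to block an isomorphism. (The paper takes the two orders on the \emph{same} carrier $\mathbb{R}$ -- namely an order of type $\mathbb{R}\oplus\mathbb{Q}$ and its reverse -- so the witnessing bijection is the identity and no Cantor--Schr\"oder--Bernstein detour is needed; your $\mathbb{R}$ versus $\mathbb{R}+\mathbb{R}$ with a $\ZF$-definable bijection works just as well for that part.)

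The problem is the step you yourself flag as the crux. You justify turning the class isomorphism $G\in\calX$ into a set of $\calM$ by ``the class-replacement principle present in $\sPC{\ZF}$'', but no such principle is present. Here $\ZF$ enters as a \emph{theory}, so $\sPC{\ZF}=\sPC{\emptyset}+\ZF$ consists of first\-/order $\ZF$ (whose replacement and separation instances apply only to $\Lang[\in]$-formulas) together with set\-/parameter\-/free predicative class comprehension; replacement for arbitrary classes is exactly what you would get from $\sPC{\ZFbase+\Repl}$ with $\Repl$ read as a \emph{scheme} (that is essentially the step from $\ZF$ to $\GB$), and keeping these apart is the whole point of the theory/scheme distinction running through this section. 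Indeed class replacement is refutable from $\sPC{\ZF}$-consistent data: take a countable $\calM\models\ZF$ and an external subset $X$ of $\omega^{\calM}$ not coded in $\calM$; closing $\{X\}$ under the comprehension of $\sPC{\emptyset}$ yields $(\calM,\calX)\models\sPC{\ZF}$ in which the class $X$ is a subclass of a set that is not a set. Nor does the strongness of $\calN_1$ rescue you: the replacement and $\in$-induction schemes instantiated at $G$ inside $\calN_1$ only control restrictions of $G$ to \emph{sets of} $\calN_1$, which by axiom (5) of $\hf{\DLO}$ are finite, whereas the atom class $B_1$ is a proper class of $\calN_1$ (though a set of $\calM$). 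So your Dedekind\-/completeness argument, which is correct for \emph{set} order\-/isomorphisms in $\calM$, does not yet apply to an arbitrary $G\in\calX$; some further argument (or a different, class\-/robust invariant) is needed to exclude isomorphisms living in $\calX$ but not coded in $\calM$. To be fair, the paper's own two\-/line proof is silent on this very point, but it does not assert a false principle to get past it.
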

\begin{proof}
Consider any model $\mathcal{M}\models \ZF$ and work in $\mathcal{M}$. Let $<_1$ be the ordering on $\mathbb{R}$ isomorphic to $\mathbb{R}\oplus\mathbb{Q}$ and let $<_2 = <_1^{-1}$. For a set $A$ let $\mathbb{HF}(A)$ denote the structure of hereditarily finite subsets of $A$ (in the sense of the background model $\mathcal{M}$). Consider $\mathcal{N}_i = (\mathbb{HF}(\mathbb{R}), \in, <_i,\mathbb{R})$. Then, since we work in $\ZF$, both $\mathcal{N}_i$ are strong models of $\tau$. However they are not isomorphic by the choice of $<_i$'s.
\end{proof}

Two last theorems of this subsection witness that the last implication from our initial list does not reverse.

\begin{thm}
$\tau$ is standalone categorical in the induction scheme but not (categorical\-/in\-/every\-/cardinality in $\PA$).
\end{thm}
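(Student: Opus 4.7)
My plan is to treat the two parts of the theorem independently. For part~(1), I will perform Cantor's back-and-forth inside $\sPC{\PAm+\Ind}$ to build the required isomorphism; for part~(2), I will construct a model of $\PC{\PA}$ in which the analogous construction cannot be closed because of the restricted induction available there.

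For the positive half, fix $(\calM,\calX)\models\sPC{\PAm+\Ind}$ and two $\calX$-strong models $\calN_1,\calN_2$ of $\tau$; by Lemma~\ref{lem_abs_equal} I may assume absolute equality on each $\calN_i$. The crucial resource available in $\sPC{\PAm+\Ind}$ and absent from $\PC{\PA}$ is second-order induction $(\forall X)\,\Ind[X/P]$, which legitimises arithmetical recursions whose step-formula mentions second-order parameters. Writing $A_i$ for the atom-class of $\calN_i$, each $(A_i,<_i)$ is a DLO without endpoints by the axioms of $\tau$. I enumerate $A_i$ via the natural well-order of $\calM$ and run Cantor's back-and-forth: at stage $n$ extend the finite partial order-isomorphism $f_{n-1}$ by adjoining, alternately, the $\calM$-least atom of $A_1$ absent from $\mathrm{dom}(f_{n-1})$ or the $\calM$-least atom of $A_2$ absent from $\mathrm{rng}(f_{n-1})$, and pick the matching image via density of the target DLO. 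Second-order induction on $n$ with $A_1,A_2,<_1,<_2$ as set parameters then delivers $g:=\bigcup_n f_n\in\calX$, a total order-isomorphism $(A_1,<_1)\simeq(A_2,<_2)$. Lifting $g$ to an isomorphism $\hat g\colon\calN_1\to\calN_2$ will proceed by $\in$-recursion: since every $x\in\calN_i$ has a finite transitive closure (axiom~B from the discussion preceding Theorem~\ref{thm:standalone_complete}), the stipulation $\hat g(\{y_1,\dots,y_k\})=\{\hat g(y_1),\dots,\hat g(y_k)\}$ together with $\hat g(a)=g(a)$ on atoms is unambiguous and, viewed as a relation on pairs, arithmetically definable from $g$ and the $\in$-structure, hence belongs to $\calX$.

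For the failure in $\PC{\PA}$, I will exhibit a model $(\calM,\calX)\models\PC{\PA}$ containing two $\calX$-strong models of $\tau$ that are bijectable in $\calX$ but not isomorphic. Take $\calM\models\PA$ of cardinality $\aleph_1$, produced by an Ehrenfeucht-Mostowski-style construction, equipped with two definable subclasses $I_1,I_2$ of cardinality $\aleph_1$, each a DLO without endpoints, such that (a)~$(I_1,<_1)$ is not isomorphic to $(I_2,<_2)$ even externally -- take for instance $I_1$ of an $\omega_1$-saturated order type and $I_2$ of a non-saturated one -- and (b)~there is a definable (non-order-preserving) bijection $I_1\to I_2$, which one can arrange by imposing an explicit bijection symbol on the indiscernibility data. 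Setting $\calN_i:=\mathbb{HF}(I_i)^\calM$ and $\calX:=\Def(\calM)$, the atom bijection lifts to a definable bijection $\calN_1\to\calN_2\in\calX$, while no order-isomorphism between $\calN_1$ and $\calN_2$ exists, not even externally, because $I_1\not\simeq I_2$. The back-and-forth of part~(1) cannot be repeated in this model because $\PC{\PA}$ grants induction only for arithmetical formulas in the language of $\PA$ itself, without second-order parameters, so the recursion step referring to $A_1,A_2,<_1,<_2$ as set parameters is not legitimate.

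The principal obstacle lies in part~(2): simultaneously ensuring that $I_1,I_2$ are non-isomorphic DLOs of the same cardinality \emph{and} that a definable bijection between them exists. Any naively chosen pair of definable DLOs of the same cardinality with a definable bijection tends to permit an arithmetic back-and-forth construction of an order-isomorphism -- precisely what the first half achieves in $\sPC{\PAm+\Ind}$. Consequently, the non-isomorphism must be imposed by carefully placing $I_1,I_2$ as indiscernibility classes of genuinely different order types, which is the delicate combinatorial point of the construction.
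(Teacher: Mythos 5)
Your first half is essentially the paper's argument: the paper reuses the back-and-forth system from the standalone completeness proof (translated into arithmetic via an Ackermann-style membership predicate) and then closes it to an isomorphism using induction for classes in $\calX$, while you run Cantor's argument directly on the atoms and lift by $\in$-recursion. Both versions rest on the same point, namely that in $\sPC{\PAm+\Ind}$ induction is available for classes with second-order parameters and the atom classes are ``internally countable'', so this part is fine modulo the usual care about getting $g=\bigcup_n f_n$ into $\calX$ by comprehension and proving totality by induction.

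The second half has a genuine gap, and it is exactly the one you flag in your closing paragraph without resolving. You require $I_1,I_2,<_1,<_2$ to be \emph{definable} subclasses of a model $\calM\models\PA$ so that the bijection lands in $\calX$. But if both orders are parametrically definable in $\calM$ in the language of $\PA$, then the back-and-forth construction from your own first half goes through with ordinary first-order parameters: $\PA$ proves induction (and the least number principle) for all $\Lang[\PA]$-formulas with parameters, bounded subclasses of $\calM$ are coded, and density on both sides lets each step be completed, so one obtains a \emph{definable} order-isomorphism $(I_1,<_1)\simeq(I_2,<_2)$. In particular the two orders are isomorphic externally, so condition (a) of your construction is unsatisfiable; there is no ``careful placement of indiscernibility classes'' that escapes this as long as everything is $\Lang[\PA]$-definable. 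The paper's construction works precisely by abandoning definability: it takes a proper initial segment $I$ of size continuum in an uncountable $\calM\models\PA$ and an \emph{external, undefinable} linear order $<$ on $I$ of type $\bbR\oplus\bbQ$, and lets $\calX$ be the definable sets of the expanded structure $(\calM,\mathbb{HF}_a(I),I,<)$. This still models $\PC{\PA}$ because $\PA$, taken as a theory rather than a scheme, imposes induction only on $\Lang[\PA]$-formulas and not on the new classes (indeed $I$ itself is a cut violating induction). The two strong models are then $(\mathbb{HF}_a(I),\in_A,<,I)$ and $(\mathbb{HF}_a(I),\in_A,<^{-1},I)$: they share the same universe, so the required bijection is the identity and no definable bijection between distinct classes has to be engineered, while $\bbR\oplus\bbQ$ is not isomorphic to its reverse, so the models are non-isomorphic even externally. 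You should restructure your counterexample along these lines; as written, the construction cannot be completed.
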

\begin{proof}
To prove that $\tau$ is standalone categorical in the induction scheme, fix any model $(\mathcal{M},\mathcal{X})\models \sPC{\tau_{\mathsf{Ind}}}$ and any two strong models $\mathcal{N}_0\calN_1\in\mathcal{X}$ of $\tau$. By repeating the proof of (1) (translating it along a standard direct interpretation of $\mathsf{AS}$ in $\PA$, for example via the Ackermanian membership predicate) we actually obtain a back-and-forth system $I\in\mathcal{X}$. However since now we have induction in $\mathcal{M}$ for all subsets of $\mathcal{M}$ which are in $\mathcal{X}$, we can easily formalize Cantor's back-and-forth argument showing that two countable back-and-forth equivalent structures are isomorphic. So we indeed obtain an isomorphism $F:\mathcal{N}_0\rightarrow \mathcal{N}_1$ such that $F\in\mathcal{X}$. 

Finally we prove that $\tau$ is not (categorical-in-every-cardinality in $\PA$) (treated as a theory). Consider an uncountable model $\mathcal{M}\models \PA$ with a proper initial segment $I$ of size continuum. Let $a>I$ and assume that $<$ is a linear order on $I$ of type $\mathbb{R}\oplus \mathbb{Q}$ (we do not require $<$ to be definable). Let $\mathbb{HF}_a(I)$ denote the subset of $\mathcal{M}$ consisting of elements of $\mathcal{M}$ which are codes of the hierarchy of hereditarily finite sets with elements of $I$ treated as atoms, according to a standard Ackermanian membership predicate $\in_A$, and such that all the sets in $\mathbb{HF}_a(I)$ are greater than $a$ (one can do this e.g. by moving all the sets from $\mathbb{HF}(I)$ by adding $a$). Let $\mathcal{X}$ be the set of the parametrically definable subsets of $(\mathcal{M}, \mathbb{HF}_a(I), I, <)$. Then $(\mathcal{M},\mathcal{X})\models \PC{\PA}$ and both $\mathcal{N}_0 = (\mathbb{HF}_a(I), \in_A, <, I)$ and $\mathcal{N}_1 = (\mathbb{HF}_a(I), \in_A, <^{-1}, I)$ are members of $\mathcal{X}$. Note that both $\mathcal{N}_i$'s are strong models of $\tau$ (because the $\mathbb{HF}_a(I)$ is truly well founded and of height $\omega$). However, $\mathcal{N}_0$ is not isomorphic to $\mathcal{N}_1$, because $<$ is not isomorphic to $<^{-1}$.
\end{proof}

\subsection{Preservation properties}

We shall now deal with the following issue: given a theory $S$, an $S$-definite $\mathcal{L}$-scheme $\sigma(P)$ and an extension $\mathcal{L}'\supseteq \mathcal{L}$ for which $\mathcal{L}'$-schemes $\tau$ is $\sigma(P)\wedge \tau(P)$ $S$-definite as an $\mathcal{L}'$-scheme? As in \cite{ena_lel} we consider two ways of expanding the language $\mathcal{L}$: by adding a new sort for sets of $\sigma$-objects and by adding a fresh predicate. In the former case, we check whether adding full comprehension scheme (for the newly added sort) preserves definiteness. In the latter, we consider a family of minimality and maximality schemes, defined as in \cite{ena_lel}.

\begin{thm}
Suppose that $\sigma$ is categorical in $S$. Then $\sigma\wedge\Com$ is  categorical in $S$ as an $\Lang[\sigma]^{+1}$ scheme.
\end{thm}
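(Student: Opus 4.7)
The strategy is to work inside $\PC{S}$, fix two strong models $\calM_{1}^{+}$ and $\calM_{2}^{+}$ of $\sigma\wedge\Com$ (with object sort $M_{i}$, set sort $S_{i}$ and membership $\in_{i}$), first extract an isomorphism of their object reducts from the hypothesis, and then lift it uniquely to the set sorts using the full comprehension conjunct.

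The first observation is that the object reducts $\calM_{i}:=\calM_{i}^{+}\restr{\Lang[\sigma]}$ are themselves strong models of $\sigma$: by the definition of $\strongs$, for every class $Y\subseteq M_{i}$ we have $(\calM_{i}^{+},Y)\models\sigma\wedge\Com$, and in particular $(\calM_{i},Y)\models\sigma[Y/P]$. Hence, by the categoricity of $\sigma$ in $S$, $\PC{S}$ proves that there is an isomorphism $F\colon\calM_{1}\isoto\calM_{2}$ living in the class part of the ambient model of $\PC{S}$.

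Next, I would extend $F$ to an isomorphism $\tilde F$ of the full two-sorted structures. For $a\in S_{1}$ put $E_{a}=\cubr*{x\in M_{1}}[x\in_{1}a]$, a class definable from $a$ by predicative comprehension. Then $F[E_{a}]$ is also a class in the ground model, and the strong-model property of $\calM_{2}^{+}$ applied to the class $F[E_{a}]$ in the role of $P$ --- specifically, the $\Com$ conjunct --- yields a witness $b\in S_{2}$ with $E_{b}=F[E_{a}]$. Since classes are assumed extensional, this $b$ is unique, so I set $\tilde F(a):=b$ and $\tilde F\restr{M_{1}}:=F$. The definition of $\tilde F$ uses only $F$ as a parameter, so $\tilde F$ exists as a class by predicative comprehension.

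Finally, I would verify that $\tilde F$ is an isomorphism of two-sorted structures. Bijectivity on the set sort follows from extensionality (injectivity) together with the analogous construction run with $F^{-1}$ and $\calM_{1}^{+}$ in place of $F$ and $\calM_{2}^{+}$ (surjectivity); membership preservation is immediate, since $x\in_{1}a$ iff $F(x)\in F[E_{a}]=E_{\tilde F(a)}$ iff $\tilde F(x)\in_{2}\tilde F(a)$. The only real subtlety is bookkeeping --- to keep track of where each comprehension step is being performed: on the side of the $\calM_{i}^{+}$ the strong-model property of $\sigma\wedge\Com$ supplies the required witnesses in $S_{i}$, while predicative comprehension of $\PC{S}$ supplies the classes $E_{a}$, $F[E_{a}]$, and ultimately $\tilde F$ itself.
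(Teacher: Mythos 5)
Your proposal is correct and follows essentially the same route as the paper's proof: pass to the object reducts (which are strong models of $\sigma$ since $\sigma$ does not mention the set sort), obtain $F$ from categoricity, and lift it to the set sort by sending each set to the unique (by extensionality) set whose extension is the $F$-image, with totality and surjectivity both supplied by the $\Com$ conjunct of the strong-model property applied in $\calM_{2}^{+}$ and $\calM_{1}^{+}$ respectively. The only point the paper additionally flags, which you may want to note, is that $F$ may a priori be a relation inducing an isomorphism only after quotienting by the internal equalities $=^{1}$, $=^{2}$; this does not affect the argument.
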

\begin{proof}
We shall deal with the parametrized version of the thesis. Fix $S$ and $\sigma$ and suppose that  $\sigma$ is categorical in $S$. Consider any model $(\mathcal{M}, \mathcal{X})\models \PC{S}$ and let $\mathcal{N}_0, \mathcal{N}_1\in\mathcal{X}$ be strong models of $\sigma\wedge\Com$. Let $\mathcal{N}_i'$ be the reduct of $\mathcal{N}_i$ to the object sort. Note that $\mathcal{N}_i'$ is first order definable in the structure $(\mathcal{M},\mathcal{N}_i)$, and hence belongs to $\mathcal{X}$. Similarly, let $\mathcal{X}_i$ be the universe of the set sort in $\mathcal{N}_i$. Then $\mathcal{X}_i\in\mathcal{X}$ (we remind the reader that two-sorted structures are officially first-order structures with the domain partitioned into sets and objects). Since $\sigma$ does not mention any resources related to the set-sort, for any  $Y\in\mathcal{X}$, $(\mathcal{N}_i,Y)\models \sigma$ iff $(\mathcal{N}_i',Y)\models \sigma.$ Hence, $\mathcal{N}_i'$ is a strong model of $\sigma$. So, in $\mathcal{X}$, there is an isomorphism $F:\mathcal{N}_0'\rightarrow \mathcal{N}_1'$. We note that in general, $F$ can be just a binary relation which yields an isomorphism after quotienting with $=^0$ and $=^1$ (the equality relations on $\mathcal{N}_0$ and $\mathcal{N}_1$, respectively): our notation below cover also these cases. We claim that $F'$ can be definably extended to an isomorphism $F':\mathcal{N}_0\rightarrow \mathcal{N}_1$. For this we need to define $F'$ on elements of $\mathcal{X}_1$. Put $F'(x)=y$ iff $F(x)=y$ for an object $x\in N_1$ and for $X\in\mathcal{X}_1$ let
\[F'(X) = Y \iff Y\in\mathcal{X}_2 \wedge \forall y\in N_2'(y\in^2 Y \equiv \exists x \in^1 X F(x) = y).\] 
Clearly so defined $F'$ belongs to $\mathcal{X}$. $F'$ is a relation and $\dom(F') = \mathcal{X}_0$ because $\mathcal{N}_1'$ is a strong model of $\Com$. We note that $F'$ is actually a function which with each set $X\in\mathcal{X}_0$ associates the set of $F$-adjacents of members of $X$.   Since $F$ is an injection, and the equality on $\mathcal{X}_0$ and $\mathcal{X}_1$ is defined by extensionality using $=^0$ and $=^1$ respectively, $F'$ is an injection as well. We check that $F'$ is onto. Fix any $Y\in\mathcal{X}_1$ and consider the set $X\in\mathcal{X}_0$ such that
\[\forall x\in N_1\bigl(x\in^1 X \equiv \exists y\in^2 Y F(x) = y\bigr).\]
So defined $X$ clearly exists, since $\mathcal{N}_0$ is a strong model of $\sigma\wedge \Com$. One readily checks that $F'(X)=Y$, which completes the proof by witnessing that $F'$ is really an isomorphism between $\mathcal{N}_0$ and $\mathcal{N}_1$. 
\end{proof}
Recall that for a formula $\alpha$ with a marked predicate $P$ and any predicate $Q$, $\mu_Q(\alpha)(P)$ denotes the scheme $\alpha\rightarrow \forall x (Q(x)\rightarrow P(x))$ and $\nu_Q(\alpha)(P)$ is the natural dual: $\alpha\rightarrow \forall x (P(x)\rightarrow Q(x))$. Roughly speaking, $\mu_Q(\alpha)(P)$ in a schematic way that $Q$ is a subset of every set which has property $\alpha$. See \cite[Definition 83]{ena_lel}.
\begin{thm}
Suppose that $\sigma(P)$ is (standalone) categorical in $S$ as an $\mathcal{L}_{\sigma}$-scheme. Let $Q$ be any fresh variable not in $\mathcal{L}_{\sigma}\cup\{P\}$. Then $\sigma(P)\wedge \alpha[Q/P]\wedge\mu_Q(\alpha)(P)$ is (standalone) categorical in $S$ as an $\Lang[\sigma]\cup \{Q\}$ scheme.
\end{thm}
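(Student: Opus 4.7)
The plan is to reduce to the assumed categoricity of $\sigma(P)$, by observing that in any strong model $\calN$ of the new scheme the interpretation of the fresh predicate $Q$ is uniquely determined by the $\Lang[\sigma]$-reduct: it is forced to be the minimum $\calX$-admissible class satisfying the property $\alpha$. Once this is made precise, the $\Lang[\sigma]$-isomorphism supplied by categoricity of $\sigma$ will automatically transport $Q_1$ to $Q_2$.

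To carry this out, I would fix a background $(\calM,\calX)\models\mathrm{(s)PC}[S]$ and two $\calX$-strong models $\calN_1,\calN_2$ of $\sigma(P)\wedge\alpha[Q/P]\wedge\mu_Q(\alpha)(P)$ in the language $\Lang[\sigma]\cup\{Q\}$. Write $\calN_i^0$ for the $\Lang[\sigma]$-reduct and $Q_i\in\calX$ for the interpretation of $Q$ in $\calN_i$ (which is a component of the strong model and hence lies in $\calX$). Since $\sigma(P)$ has no occurrence of $Q$, direct unpacking shows that $\calN_i^0$ is itself an $\calX$-strong model of $\sigma(P)$, so the assumed (standalone) categoricity of $\sigma$ in $S$ yields an isomorphism $F\colon\calN_1^0\to\calN_2^0$ with $F\in\calX$.

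Next I would extract two consequences of $\calN_i$ being a strong model of the full scheme by letting the $P$-slot range over admissible $Y\in\calX$: the conjunct $\alpha[Q/P]$ (which does not depend on $Y$) gives $(\calN_i^0, Q_i)\models\alpha$, while $\mu_Q(\alpha)(P)$ gives that whenever $(\calN_i^0, Y)\models\alpha$ one has $Q_i\subseteq Y$. In other words, $Q_i$ is the minimum element of $\calX$ among $\calX$-admissible classes satisfying $\alpha$ in $\calN_i^0$.

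To conclude $F[Q_1]=Q_2$: both $F[Q_1]$ and $F^{-1}[Q_2]$ lie in $\calX$ by predicative comprehension, which applies in the standalone case as well since we only use the second-order parameters $F$ and $Q_i$. Because $F$ is an $\Lang[\sigma]$-isomorphism, $(\calN_1^0,Q_1)\models\alpha$ transports to $(\calN_2^0,F[Q_1])\models\alpha$, whence minimality of $Q_2$ gives $Q_2\subseteq F[Q_1]$; symmetrically, $F[Q_1]\subseteq Q_2$, so $F[Q_1]=Q_2$, and thus $F$ is an isomorphism of the full $\Lang[\sigma]\cup\{Q\}$-structures. The step requiring the most care is the correct unpacking of the strong-model condition in the enlarged language—in particular, noting that $Q_i\in\calX$ and that $F[Q_1], F^{-1}[Q_2]$ are $\calX$-admissible—after which the result follows mechanically and uniformly in the standalone and parametric variants.
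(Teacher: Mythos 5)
Your proposal is correct and follows essentially the same route as the paper's own proof: reduce to the $\Lang[\sigma]$-reducts being strong models of $\sigma$, obtain the isomorphism $F$ from the assumed categoricity, transport $\alpha$ along $F$, and use the minimality conjunct $\mu_Q(\alpha)(P)$ in both directions to get $Q_2\subseteq F[Q_1]$ and $F[Q_1]\subseteq Q_2$. Your explicit check that $F[Q_1]$ and $F^{-1}[Q_2]$ lie in $\calX$ (so that they are admissible substituends) is a point the paper leaves implicit, but otherwise the two arguments coincide.
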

\begin{proof}
Fix any $(\mathcal{M},\mathcal{X})\models \PC{S}$ and let $\mathcal{N}_1$, $\mathcal{N}_2$ be any strong models of $\sigma(P)\wedge \alpha[Q/P]\wedge\mu_Q(\alpha)(P)$. Let $\mathcal{N}_i'$ be the reduct of $\mathcal{N}_i$ to $\mathcal{L}_{\sigma}$. Then, $\mathcal{N}_i'$ is a strong model of $\sigma(P)$ and hence there is an isomorphism $F: \mathcal{N}_1\rightarrow \mathcal{N}_2$. We claim that $F$ maps $Q^{\mathcal{N}_1}$ to $Q^{\mathcal{N}_2}$ (which we shall abbreviate as $Q_1$ and $Q_2$ respectively).  Since $F$ is an isomorphism of $\mathcal{L}_{\sigma}$- structures and $\alpha[Q/P]$ is an $\mathcal{L}_{\sigma}\cup\{Q\}$ sentence, then $\mathcal{N}_2\models \alpha[F[Q_1]/P]$ and likewise $\mathcal{N}_1\models \alpha[F^{-1}[Q_2]/P]$. Since both $\mathcal{N}_i$'s are strong models of $\mu_Q(\alpha)$ we conclude that $\mathcal{N}_1\models Q\subseteq F^{-1}[Q_2]$ and $\mathcal{N}_2\models Q\subseteq F[Q_1]$. Hence $Q_1\subseteq F^{-1}[Q_2]$ and $Q_2\subseteq F[Q_1]$. It follows that $Q_2 = F[Q_1]$ and we are done.
\end{proof}

\begin{remark}
Essentially the same argument as above shows that if $\sigma$ is (standalone) categorical\-/in\-/every\-/cardinality in $S$, then $\sigma\wedge \mu_Q(\alpha)$ is (standalone) categorical\-/in\-/every\-/cardinality in $S$ as an $\mathcal{L}_{\sigma}\cup\{Q\}$-scheme.
\end{remark}

Theorems below witness the failure of other natural candidates for preservation properties. The first one shows that the addition of full comprehension scheme need not guarantee the preservation of completeness (in a particularly strong sense).

\begin{thm}
There is a scheme $\tau$ which is standalone complete in $\mathsf{AS}$ but $\tau\wedge \Com$ is (non-complete-in-every-cardinality) in $\ZF$.
\end{thm}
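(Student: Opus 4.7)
We continue to use $\tau := \hf{\mathrm{DLO}}$; Theorem~\ref{thm:standalone_complete} already yields that $\tau$ is standalone complete in $\mathsf{AS}$, so only the second clause requires work. The strategy is to exhibit, in an arbitrary model of $\PC{\ZF}$, two strong models of $\tau\wedge\Com$ in bijection that disagree on a single $\Lang[\tau]^{+1}$-sentence $\alpha$ exploiting the set sort made available by $\Com$.

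Fix $(\calM,\calX)\models\PC{\ZF}$ and work in $\calM$. Choose an infinite set of atoms $A$ (for concreteness $A:=\bbR$) together with two dense linear orders without endpoints on $A$: $<_1$ of order type $\bbR\oplus\bbQ$ and $<_2$ of order type $\bbQ\oplus\bbR$. Define $\calN_i$ to be the two-sorted structure whose object sort is $\hf{A}$ equipped with membership, the atom-predicate $A$ and the order $<_i$, and whose set sort is the full powerset of $\hf{A}$ with canonical membership. Since this set sort already contains every subset of the object sort that $\calX$ could supply, each $\calN_i$ is readily seen to be an $\calX$-strong model of $\tau\wedge\Com$; and since the object and set sorts literally coincide in $\calN_1$ and $\calN_2$ (they differ only in the interpretation of $<$), the identity is an $\calX$-definable bijection $\calN_1\bijto\calN_2$.

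Take $\alpha\in\Lang[\tau]^{+1}$ to be the sentence asserting the existence of set-sort elements $X,Y$ partitioning $A$ into nonempty pieces such that every element of $X$ is strictly $<$ every element of $Y$, $X$ has no greatest element and $Y$ has no least element, and in addition every nonempty subset of $X$ bounded above in $X$ has a least upper bound in $X$. In $\calN_1$ the canonical splitting $\bbR\oplus\bbQ$ witnesses $\alpha$, with the $\bbR$-initial segment supplying Dedekind-completeness. In $\calN_2$ a case analysis on the location of any hypothetical cut rules out every possibility: a cut strictly inside the $\bbR$-part must, by the Dedekind-completeness of $\bbR$, yield either a maximum on the left or a minimum on the right; a cut at the junction forces $X=\bbQ$, which fails Dedekind-completeness; and a cut strictly inside the $\bbQ$-part yields an $X$ isomorphic to a proper no-maximum initial segment of $\bbQ$, which admits an increasing sequence of rationals with irrational supremum not in $X$. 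Thus $\calN_1\models\alpha$ while $\calN_2\nmodels\alpha$, establishing the failure of $\eeq{\alpha}$-in-every-cardinality.

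The only real obstacle is verifying the case analysis uniformly inside an arbitrary model of $\ZF$, using solely what $\PC{\ZF}$ proves about internal $\bbR$ and $\bbQ$ (notably Dedekind-completeness of $\bbR$ and the construction of irrational cuts in $\bbQ$). The remaining ingredients — the verification that $\calN_i\strongs\tau\wedge\Com$, the fact that $\alpha$ is genuinely an $\Lang[\tau]^{+1}$-sentence, and the existence of the identity bijection — are bookkeeping in the same spirit as Theorem~\ref{thm:tau_non-categorical_in_cardinalities_inZF}.
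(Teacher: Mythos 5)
Your proposal is correct and follows essentially the same route as the paper: reuse $\tau=\hf{\mathrm{DLO}}$ with Theorem~\ref{thm:standalone_complete} for the first clause, then take the two models on $\hf{\bbR}$ with orders of type $\bbR\oplus\bbQ$ and $\bbQ\oplus\bbR$ (the paper uses $<_1^{-1}$, which is the same order type), equip both with the full powerset as set sort, and separate them by a second\-/order sentence about Dedekind cuts. Your distinguishing sentence $\alpha$ (a gap-partition whose left half is internally Dedekind-complete) is phrased differently from the paper's -- and in fact your case analysis is spelled out more carefully than the paper's formula, which as printed omits the ``no greatest element'' condition on $X$ -- but it plays exactly the same role, so this is the same proof in substance.
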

\begin{proof}
Let $\tau$ be the scheme $\mathsf{HF}(DLO)$ from Section \ref{sect:ground_sep}. By Theorem \ref{thm:standalone_complete} $\tau$ is standalone complete in $\mathsf{AS}$, so it suffices to show that provably in $\ZF$ there are strong set-sized models of $\tau\wedge \Com$ which are of the same cardinality but are not elementary equivalent (as two-sorted structures). Work in $\ZF$. Consider the set-sized models $\mathcal{N}_i$ constructed in the proof of Theorem \ref{thm:tau_non-categorical_in_cardinalities_inZF} and let $\mathcal{N}_i^{\mathsf{II}} = (\mathcal{N}_i, \mathcal{P}(N_i)).$ Then clearly $\mathcal{N}_1^{\mathsf{II}}\models \forall x \exists X\subseteq A\bigl(X<x \wedge \forall z\bigl(X<z\rightarrow \exists y (X<y<z)\bigr)$, but this sentence is false in $\mathcal{N}_0^{\mathsf{II}}$.
\end{proof}

\begin{thm}
There is a scheme $\tau$ such that $\tau$ is standalone categorical in every cardinality in the scheme $\mathsf{BST}+\mathsf{V=HOD}+\Repl(P)$ but $\tau\wedge \Com$ is not categorical in every cardinality in $\mathsf{BST}+\mathsf{V=HOD}+\Repl(P)$.
\end{thm}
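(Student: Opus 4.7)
The plan is to reuse the scheme $\tau$ from Theorem~\ref{thm_standalone_quasi_not_full_cat}, namely $\hf{\emptyset}$ augmented with the axiom $\forall x\exists a(A(a)\wedge a\notin x)$, so that standalone categoricity in every cardinality in $\mathsf{BST}+\mathsf{V=HOD}+\Repl(P)$ is already in hand. What remains is to exhibit a model of $\PC{\mathsf{BST}+\mathsf{V=HOD}+\Repl(P)}$ that contains two $\calX$-strong models of $\tau\wedge\Com$, read now as an $\Lang[\tau]^{+1}$-scheme, which admit an $\calX$-bijection but no $\calX$-isomorphism. The conceptual lever is that a bijection between two-sorted structures may freely identify elements across sorts, whereas an isomorphism must preserve the two sort predicates; this asymmetry is what we shall exploit via cardinal arithmetic.

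Concretely, I would fix a background model $\calM\models\ZFC+\mathsf{V=HOD}+2^{\aleph_0}=2^{\aleph_1}$. Such $\calM$ exist by standard methods: adding $\aleph_2$ Cohen reals to $L$ forces $2^{\aleph_0}=2^{\aleph_1}=\aleph_2$ via a nice-names count, and a McAloon-style coding forcing can then be applied to recover $V=\mathrm{HOD}$. Let $\calX$ consist of all subclasses of $\calM$ parametrically definable in $\calM$, so that $(\calM,\calX)\models\PC{\ZFC+V=\mathrm{HOD}}$ and hence $(\calM,\calX)\models\PC{\mathsf{BST}+\mathsf{V=HOD}+\Repl(P)}$. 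Pick $A_1,A_2\in\calM$ with $|A_1|^{\calM}=\aleph_0$ and $|A_2|^{\calM}=\aleph_1$, and define the two-sorted structures $\calN_i=(N_i^{\mathrm{obj}},N_i^{\mathrm{set}},\in)$ by taking $N_i^{\mathrm{obj}}$ to be the $\calM$-hierarchy of hereditarily finite objects over the atoms $A_i$, built along the pattern of $\hf{\emptyset}$, and $N_i^{\mathrm{set}}:=\mathcal{P}(N_i^{\mathrm{obj}})^{\calM}$.

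Three routine points then need verification. First, each $\calN_i$ is an $\calX$-strong model of $\tau\wedge\Com$: the object sort models $\tau$ by the construction of $\hf{\emptyset}$ together with the ``infinitely many atoms'' axiom, and the set sort, being the full $\calM$-powerset of the object sort, contains every $\calX$-subset of $N_i^{\mathrm{obj}}$, so every substitution instance of $\Com$ is witnessed. Second, in $\calM$ the two-sorted universes have equal cardinality, since $|\calN_1|^{\calM}=2^{\aleph_0}=2^{\aleph_1}=|\calN_2|^{\calM}$, so a bijection $F\colon\calN_1\bijto\calN_2$ exists in $\calM$ and therefore in $\calX$. Third, no $\calX$-isomorphism between $\calN_1$ and $\calN_2$ can exist: any such isomorphism must preserve the two sort predicates and hence restrict to a bijection $N_1^{\mathrm{obj}}\bijto N_2^{\mathrm{obj}}$ in $\calX\subseteq\calM$, but $|N_1^{\mathrm{obj}}|^{\calM}=\aleph_0\neq\aleph_1=|N_2^{\mathrm{obj}}|^{\calM}$, a contradiction.

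The principal obstacle is the cardinal-arithmetic ingredient, namely arranging $V=\mathrm{HOD}$ and $2^{\aleph_0}=2^{\aleph_1}$ in one model of $\ZFC$. Once that is secured via the forcing-plus-coding route indicated above, the remainder is a direct cardinality count, powered by the single structural observation that two-sort signatures pin isomorphisms down to respect sorts while placing no analogous constraint on bare bijections.
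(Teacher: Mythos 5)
Your proposal is correct and follows essentially the same route as the paper's own proof: the same scheme $\tau=\hf{\emptyset}+\forall x\exists a(A(a)\wedge a\notin x)$, the same pair of structures $\bigl(\mathbb{HF}(A_i),\in,\calP(\mathbb{HF}(A_i))\bigr)$ over atom sets of cardinalities $\kappa<\lambda$ with $2^{\kappa}=2^{\lambda}$, a bijection from the equality of the full two-sorted cardinalities, and non-isomorphism from the mismatch of the object sorts. The only difference is that you explicitly justify the consistency of $\mathsf{V=HOD}+2^{\aleph_0}=2^{\aleph_1}$ via forcing plus McAloon-style coding, a point the paper leaves implicit.
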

\begin{proof}
Let $\tau$ be the scheme $\mathsf{HF}(\emptyset) + \forall x\exists a\in A (a\notin x)$ used to prove Theorem \ref{thm_standalone_quasi_not_full_cat}. We showed that $\tau$ is standalone categorical in every cardinality in the scheme $\mathsf{BST}+\mathsf{V=HOD}+\Repl(P)$.

We shall now argue that $\tau\wedge \Com{}$ is not categorical-in-every-cardinality in $\mathsf{BST}+\mathsf{V=HOD} + \Repl(P).$ Pick any model of $\mathcal{M}\models\mathsf{BST}+\mathsf{V=HOD} + \Repl(\mathcal{L}_{\in})$ such that for some ordinals $\kappa<\lambda\in M$ we have $\mathcal{M}\models 2^{\kappa} = 2^{\lambda}$. Working inside $\mathcal{M}$ consider $\mathcal{N}_1 = (\mathbb{HF}(A), \in, \mathcal{P}(\mathbb{HF}(A)))$ and $\mathcal{N}_2 = (\mathbb{HF}(B), \in, \mathcal{P}(\mathbb{HF}(B)))$, where $A$, $B$ are two sets of cardinality $\kappa$ and $\lambda$ respectively and $\mathbb{HF}(C)$ denotes the hiererachy of hereditarily finite sets over atoms from set $C$ (as defined in $\mathcal{M}$). Since $\mathcal{N}_i$'s are definable in $\mathcal{M}$ and their second-order universes are full powersets of their respective first-order universes, in $(\mathcal{M},\Def{M})$ are full models of $\tau\wedge \Com{}$. However, since their first-order universes have different caridnality, they cannot be isomorphic.
\end{proof}


\section{Minimality of Peano Arithmetic}

The question of whether $\PA$ is, in some sense, a minimal theory in its language that, in some sense, sufficiently accurately describes the natural numbers has already been discussed in various contexts. In particular, in \cite{simpson_yokoyama}, it is shown that the canonical sentence expressing the categoricity of the induction scheme entails Weak K\"onig Lemma over a base system $\mathsf{RCA}_0^*$. Working in the same context of reverse mathematics, \cite{kolodziejczyk_yokoyama} shows that any categorical characterization of natural numbers entails the induction principle for $\Sigma_1$ formulae ($\mathrm{I}\Sigma_1^0$). Let us stress that our framework is different as we do not put any restrictions on the complexity of the definitions of sets (in the language of the base theory) but on the other hand we work in much weaker base systems for the object-sort. In particular our weakest system, $\AS$ is much weaker with respect to the consistency strength, then the weakest arithmetical theory typically considered in the reverse mathemathics ($\mathsf{I}\Delta_0 + \exp$). A result closest to ours was obtained by Feferman and Hellman in \cite{Feferman_Hellman}, however, in contrast to our $\mathsf{PC}[\AS]$, their base system (EFSC) is not finitely axiomatizable. The aim of this subsection is to give a glimpse into how this issue appears within our framework. Let us recall that $\PAm+\Ind(\Lang[\PA])=\PA$.

\begin{thm}\label{thm_Ind_s_cat}
$\PAm+\Ind$ is standalone categorical in $\AS$.
\end{thm}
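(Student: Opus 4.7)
The plan is a Dedekind-style construction. Working in $\sPC{\AS}$, I will build an isomorphism $F : \mathcal{N}_0 \to \mathcal{N}_1$ as an $\mathcal{X}$-class via finite successor paths, using the $\AS$-coding of sequences to \emph{define} $F$, and the $\Ind$-scheme in $\mathcal{N}_0$ and $\mathcal{N}_1$ (available on $\mathcal{X}$-sets because both are strong models of $\PAm + \Ind$) to \emph{verify} that $F$ is total, functional, injective, surjective, and preserves the arithmetical operations.

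First I would fix a model $(\mathcal{M}, \mathcal{X}) \models \sPC{\AS}$ and two $\mathcal{X}$-strong models $\mathcal{N}_0, \mathcal{N}_1$ of $\PAm + \Ind$; by Lemma~\ref{lem_abs_equal} I may assume both $\mathcal{N}_i$ have absolute equality. Let $\Phi(a,b)$ be the $\Lang[\AS]$-formula expressing \emph{``there exists a finite sequence $s = \langle(a_0,b_0),\ldots,(a_n,b_n)\rangle$ with $(a_0,b_0)=(0^{\mathcal{N}_0},0^{\mathcal{N}_1})$, $(a_{i+1},b_{i+1})=(S^{\mathcal{N}_0}(a_i),\,S^{\mathcal{N}_1}(b_i))$ for every $i<n$, and $(a_n,b_n)=(a,b)$''}. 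Using $N_0$, $N_1$ and the graphs of the successor relations as set-parameters, the set-parameters comprehension of $\sPC{\AS}$ yields $F := \{(a,b) : \Phi(a,b)\} \in \mathcal{X}$.

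Next I would verify, one property at a time, that $F$ is the graph of an isomorphism $\mathcal{N}_0 \to \mathcal{N}_1$. For totality, the $\mathcal{X}$-class $D := \{a \in N_0 : \exists b\,\Phi(a,b)\}$ contains $0^{\mathcal{N}_0}$ trivially and is closed under $S^{\mathcal{N}_0}$ (extend any witnessing sequence by one step), so $D = N_0$ because $\mathcal{N}_0 \strongs \Ind$. For functionality, an analogous induction works on $\{a : \forall b_1, b_2\,(\Phi(a,b_1)\wedge\Phi(a,b_2)\to b_1=b_2)\}$: the base case uses that $0^{\mathcal{N}_0}$ is not a successor in $\mathcal{N}_0$ (so any witness for $\Phi(0^{\mathcal{N}_0},b)$ must have $\AS$-length zero, forcing $b = 0^{\mathcal{N}_1}$), and the successor step uses injectivity of $S^{\mathcal{N}_0}$ and $S^{\mathcal{N}_1}$. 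Injectivity and surjectivity of $F$ follow by symmetric arguments in $\mathcal{N}_1$. Finally, preservation of $+^{\mathcal{N}_i}$ and $\cdot^{\mathcal{N}_i}$ follows from the primitive-recursive $\PAm$-axioms governing them, via successive applications of $\Ind$ in $\mathcal{N}_0$ to $\mathcal{X}$-classes such as $\{b \in N_0 : \forall a \in N_0\ F(a +^{\mathcal{N}_0} b) = F(a) +^{\mathcal{N}_1} F(b)\}$.

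The main obstacle I anticipate is the careful bookkeeping around parameters. Because we are working in $\sPC$ rather than $\PC$, comprehension is available only with set-parameters, so any auxiliary set that would naturally carry a first-order parameter $a \in N_0$ (as would arise in the $+$- and $\cdot$-preservation clauses) must be rephrased with $a$ universally quantified inside, as done in the formula displayed above. A secondary subtlety is that $\AS$ does not provide induction on $\AS$-sequence length, so the base case in the functionality argument proceeds by an explicit case split on whether the witnessing sequence has length zero or positive, appealing to the $\PAm$-axiom that $0$ is not a successor. Once these two points are handled, the classical Dedekind recursion transfers cleanly, and we obtain $F \in \mathcal{X}$ witnessing $\iso(\mathcal{N}_0,\mathcal{N}_1)$.
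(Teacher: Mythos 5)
Your proposal is essentially the paper's own argument: after reducing to strong models with absolute equality via Lemma \ref{lem_abs_equal}, both proofs define the isomorphism as the union of finite partial isomorphisms coded in $\AS$, and both use the fact that the two structures are strong models of $\Ind$ to show that the domain and the range of this union exhaust the respective models. The only substantive variation is that the paper's formula $\varphi(x_1,x_2)$ asserts the existence of a unique isomorphism of the full (relational) $\Lang[\PA]$-structures restricted to the initial segments below $x_1$ and $x_2$, so preservation of $+$ and $\cdot$ is built in, whereas you track only successor paths and then derive $+$- and $\cdot$-preservation by further inductions from the $\PAm$ recursion equations; both routes work, and your bookkeeping about set-parameters-only comprehension in $\sPC{\AS}$ is exactly right. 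One small repair is needed in your coding, beyond the base-case issue you flag: $\AS$ has adjunction but no separation, so at the successor step of your functionality induction you cannot truncate a length-indexed witnessing sequence for $(S(a),b')$ to extract a witness for the predecessor pair. The standard fix, and the one the paper uses (its witnesses are formalized like the $\tuple{a,b}$-pointed partial isomorphisms from the proof of Theorem \ref{thm_parameters}), is to take the witness to be a set of pairs containing $(0^{\mathcal{N}_0},0^{\mathcal{N}_1})$ and closed under simultaneous predecessor and successor within its domain; then a single witness certifies every pair it contains, extension is by adjunction, and no truncation is ever required.
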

\begin{proof}
By invoking Lemma \ref{lem_abs_equal}, it sufficient to show that $\PAm+\Ind$ is standalone categorical in $\AS$ with respect to strong models with absolute equality. We are working in $\sPC{\AS}$. Let $\calM_{1}$ and $\calM_{2}$ be any two strong models of $\PAm+\Ind$ with absolute equality. Let $\varphi(x_{1},x_{2})$ be the formula asserting that there exists a unique isomorphism (of $\Lang[\PA]$-structures) from $\calM_{1}\restr{\leqs x_{1}}$ onto $\calM_{2}\restr{\leqs x_{2}}$ -- where the notion of isomorphism is formalized in $\AS$, in an analogous way to the notion of $\tuple{a,b}$-pointed isomorphism from the proof of Theorem \ref{thm_parameters}. Since $\calM_{1}$ and $\calM_{2}$ are both models of $\PAm$, the classes
\begin{equation*}
\{x_{1}\in\calM_{1}\colon\ (\exists x_{2}\in\calM_{2})\varphi(x_{1},x_{2})\}\ \ \text{and}\ \ \{x_{2}\in\calM_{2}\colon\ (\exists x_{1}\in\calM_{1})\varphi(x_{1},x_{2})\}
\end{equation*}
are both non-empty initial segments of $\calM_{1}$ and $\calM_{2}$ respectively, closed under the respective succesors. Thus those classes are equal to $\dom^{\calM_{1}}$ and $\dom^{\calM_{2}}$ respectively, since $\calM_{1}$ and $\calM_{2}$ are both strong models of $\Ind$. Therefore $\varphi(x_{1},x_{2})$ is an isomorphism from $\calM_{1}$ onto~$\calM_{2}$.
\end{proof}

Above, we proved that $\PA$ can be axiomatized by all instances of a scheme that is standalone categorical in $\AS$. The next theorem shows that no finitely axiomatizable subtheory of $\PA$ can be axiomatized by all instances of a scheme that is complete in itself\,\footnote{This theorem has generalizations; for example, it remains true after replacing “finitely axiomatizable subtheory of $\PA$’’ with “$\Sigma_n$-axiomatizable theory consistent with $\PA$’’. To show this, one may in the proof replace $(\bbN,\Def{\bbN})$ with $(\calN,\Def{\calN})$, where $\calN\models\PA+\sigma(\Lang[\PA])$, and replace $\ttM$ with an interpretation that yields a model which is a 
$\Sigma_{n}$-elementary extension of the ground structure; see \cite{GKL}.} (or by all instances of, for example, $\PAm+\Ind$). The idea of the proof goes back to Enayat’s argument that finitely axiomatizable subtheories of $\PA$ are not solid. On the other hand, the theorem one after the next shows that there is a proper subtheory of $\PA$ axiomatizable by all instances of a scheme that is standalone categorical in $\AS$. Its construction parallels the construction of a proper solid subtheory of $\PA$ in \cite{GKL}.

\begin{thm}
There is no scheme $\sigma$ such that $\sigma(\Lang[\PA])$ axiomatizes a finitely axiomatizable subtheory of $\PA$, and $\sigma$ is complete in itself.\footnote{The proof shows that such a scheme is not complete in any scheme whose models include $(\bbN,\Def{\bbN})$.}
\end{thm}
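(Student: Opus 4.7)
The plan is a proof by contradiction, modeled on Enayat's argument that finitely axiomatizable subtheories of $\PA$ are not solid. Suppose such a $\sigma$ exists and set $T := \sigma(\Lang[\PA])$, a finitely axiomatizable theory with $T \subseteq \PA$. Since $\PA$ is not finitely axiomatizable, $T \subsetneq \PA$, so I can fix a sentence $\theta \in \PA \setminus T$ such that $T + \neg\theta$ is consistent; the canonical choice is $\theta := \Con(T)$, unprovable in $T$ by G\"odel's second incompleteness theorem (in the borderline cases of very weak $T$ one takes any sentence witnessing $T \subsetneq \PA$ instead). The target ambient structure is $(\bbN, \Def{\bbN})$, which is a model of $\PC{\PA}$ and hence of $\PC{\sigma}$; the goal is to exhibit two $\Def{\bbN}$-strong models of $\sigma$ that disagree on $\theta$, thereby refuting the $\eeq{\theta}$-definiteness statement for $\sigma$ in itself.

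The first strong model is $\bbN$ itself: every $Y \in \Def{\bbN}$ is defined by an $\Lang[\PA]$-formula with parameters, so $(\bbN, Y) \models \sigma$ reduces to a parametric instance of $\sigma(\Lang[\PA]) \subseteq \PA$ holding in the standard model. For the second strong model $\calM$, I would apply the arithmetized completeness theorem inside $(\bbN, \Def{\bbN}) \models \ACAz$ to the consistent theory $T + \neg\theta$, obtaining a $\Def{\bbN}$-definable interpretation $\ttM$ whose image is a class-sized structure $\calM \models T + \neg\theta$. This $\ttM$ is precisely the inner interpretation that Enayat uses to refute solidity of $T$, and it immediately gives $\calM \models \neg\theta$ while $\bbN \models \theta$.

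The hard step, and what I expect to be the main obstacle, is verifying that $\calM$ is not merely a model of $T$ but a genuine $\Def{\bbN}$-strong model of $\sigma$: one needs $(\calM, Y \cap |\calM|) \models \sigma$ for \emph{every} $Y \in \Def{\bbN}$, and not only for those $Y$ that are $\calM$-definable from parameters in $|\calM|$. Following the strategy indicated in the footnote and elaborated in \cite{GKL}, I would arrange the construction so that $\ttM$ yields a model $\calM$ which is a $\Sigma_n$-elementary extension of the ground structure, where $n$ is chosen to dominate the quantifier complexity of $\sigma$; the Henkin witnesses used in the arithmetized completeness theorem are then selected via the $\Sigma_n$-truth predicate available in $\bbN$. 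The strong model property of $\bbN$ propagates along this elementarity to $\calM$, with each class parameter $Y$ handled via its defining formula and arithmetic parameters. Once $\calM \strongs \sigma$ is established, the contradiction is immediate: $\bbN$ and $\calM$ are two $\Def{\bbN}$-strong models of $\sigma$ disagreeing on $\theta$, which contradicts $\sigma$ being complete in itself; the very same argument, with $(\bbN, \Def{\bbN})$ replaced by $(\calN, \Def{\calN})$ for any $\calN$ modelling $\PA + \sigma(\Lang[\PA])$, yields the strengthening recorded in the footnote.
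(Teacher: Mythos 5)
Your high-level skeleton is the same as the paper's: work inside $(\bbN,\Def{\bbN})\models\PC{\sigma}$ and exhibit two $\Def{\bbN}$-strong models of $\sigma$ disagreeing on some sentence. The first strong model ($\bbN$ itself) is fine. The genuine gap is in the second one. A model $\calM\models T+\lnot\Con(T)$ produced by the arithmetized completeness theorem is merely \emph{interpretable} in $\bbN$, and interpretability is far too weak for the strong-model property: you need $(\calM,Y)\models\sigma$ for \emph{every} $Y\in\Def{\bbN}$, and $\Def{\bbN}$ contains classes that are invisible to $\calM$ — for instance, if $\sigma$ contains $\Ind(P)$, the standard cut of the (necessarily nonstandard) model $\calM$ lies in $\Def{\bbN}$ and witnesses a failure of induction, so $\calM$ is not a $\Def{\bbN}$-strong model of $\sigma$ at all. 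Your proposed repair via $\Sigma_n$-elementarity fails on both counts. First, it does not deliver the strong-model property: a class $Y\in\Def{\bbN}$ restricted to $\calM$ need not be $\calM$-definable, and the formula defining $Y$ in $\bbN$, when evaluated in $\calM$, defines a different set, so ``handling $Y$ via its defining formula'' does not reduce $(\calM,Y)\models\sigma$ to an instance of $\sigma(\Lang[\PA])$ in $\calM$. Second, it is outright incompatible with your choice of separating sentence: $\lnot\Con(T)$ is $\Sigma_1$, so if $\calM$ were a $\Sigma_n$-elementary extension of $\bbN$ with $n\geq 1$ and $\calM\models\lnot\Con(T)$, then $\bbN\models\lnot\Con(T)$, a contradiction.

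What the paper actually uses to close this gap is the Enayat--GKL theorem that, for $n$ with $\sigma(\Lang[\PA])\subseteq\IS{n}$, there is an extension $W$ of $\IS{n}+\lnot\BS{n+1}$ that is \emph{bi-interpretable} with $\PA$, via $\ttM\colon\PA\intps W$. Bi-interpretability — not elementarity — is the mechanism that yields
\begin{equation*}
\Def{\bbN}\cap\calP(\ttM^{\bbN})=\Def{\ttM^{\bbN}}\text{,}
\end{equation*}
because every $\bbN$-definable subset of $\ttM^{\bbN}$ can be pulled through the definable isomorphism back into $\ttM^{\bbN}$ itself; since $\ttM^{\bbN}\models\sigma(\Lang[\PA])$, it follows that $\ttM^{\bbN}$ is a genuine $\Def{\bbN}$-strong model of $\sigma$, and the separating sentence is $\BS{n+1}$ rather than $\Con(T)$. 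The $\Sigma_n$-elementarity mentioned in the footnote plays a different role than you assign it: in the generalization to $\Sigma_n$-axiomatizable subtheories it guarantees that the internal model still satisfies $T$ (which can no longer be folded into a finite $W$); it is not what secures the strong-model property. To repair your argument you would need to replace the arithmetized completeness step with this bi-interpretability result, at which point you essentially recover the paper's proof.
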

\begin{proof}
Suppose that $\sigma$ is such a scheme. Then $(\bbN,\Def{\bbN})\models\PC{\sigma}$, since $\sigma(\Lang[\PA])$ axiomatizes a subtheory of $\PA$. We will show that there is an $\alpha\in\Lang[\PA]$ such that this model does not satisfy the $\eeq{\alpha}$-definiteness statement for $\sigma$.

\bigskip
Let $n$ be a natural number such that $\sigma(\Lang[\PA])$ is a subtheory of $\IS{n}$. Such a number exists since $\sigma(\Lang[\PA])$ axiomatizes a finitely axiomatizable subtheory of $\PA$. Let $W$ be an extension of $\IS{n}+\lnot\BS{n+1}$ that is bi-interpretable with $\PA$ (see \cite[Theorem 24]{ena_lel}); say via $\ttM\colon\PA\intps W$. Then
\begin{equation*}
\Def{\bbN}\cap\calP(\ttM^{\bbN})=\Def{\ttM^{\bbN}}\text{\ \ and\ \ }\robr{\ttM^{\bbN},\Def{\ttM^{\bbN}}}\models\PC{\sigma}\text{,}
\end{equation*}
since $\ttM^{\bbN}$ is bi-interpretable with $\bbN$ via $\ttM$, and $\ttM^{\bbN}$ is a model of $\sigma(\Lang[\PA])$. Thus $(\bbN,\Def{\bbN})\models(\ttM\strongs\sigma)$. Therefore $(\bbN,\Def{\bbN})$ does not satisfy the $\eeq{\BS{n+1}}$-definiteness statement for $\sigma$.
\end{proof}

\begin{thm}
There exists a scheme $\sigma$ such that $\sigma(\Lang[\PA])\subsetneq\PA$, and $\sigma$ is standalone categorical in $\AS$.
\end{thm}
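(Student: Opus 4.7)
The plan is to build, in the spirit of the construction of a proper solid subtheory of $\PA$ in \cite{GKL}, a \emph{gated induction} scheme
\[
\sigma(P) \;:=\; \PAm \;\wedge\; \bigl(\chi(P) \to \Ind(P)\bigr),
\]
where $\chi(P) \in \Lang[\PA] \cup \{P\}$ is a carefully chosen guard formula with two opposing features: (a) $\sPC{\AS}$ proves that whenever $\calN \in \calX$ is a strong model of $\sigma$ and $Y \in \calX$ is a subset of $\calN$, the sentence $\chi(Y)$ holds in $\calN$; and (b) there exists an arithmetic $\varphi_0 \in \Lang[\PA]$ such that $\sigma(\Lang[\PA]) \cup \{\lnot \chi(\varphi_0), \lnot\Ind(\varphi_0)\}$ is consistent. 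The asymmetry to be exploited is the gap between the resources of $\sPC{\AS}$ (predicative comprehension and set coding via $\AS$) and those of the purely first-order theory $\sigma(\Lang[\PA])$ — a gap that should suffice for $\chi$ to hold uniformly for externally presented sets yet be defeasible for a cleverly chosen arithmetic substitution.

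Granted such $\chi$, standalone categoricity is an adaptation of Theorem~\ref{thm_Ind_s_cat}. Using Lemma~\ref{lem_abs_equal}, reduce to strong models $\calM_1, \calM_2 \in \calX$ of $\sigma$ with absolute equality. Let $\varphi(x_1,x_2)$ be the $\Lang[\AS]$-formula asserting the existence of a unique $\Lang[\PA]$-isomorphism from $\calM_1\restr{\leqs x_1}$ onto $\calM_2\restr{\leqs x_2}$, and set $X_1 := \{x_1 \in \calM_1 : \exists x_2 \in \calM_2\, \varphi(x_1,x_2)\}$, which lies in $\calX$ by predicative comprehension. Property~(a) supplies $\chi(X_1)$, so the gated scheme fires and delivers $\Ind(X_1)$; since $X_1 \ni 0$ and is successor-closed, $X_1 = \dom^{\calM_1}$, and symmetrically for $\calM_2$, so $\varphi$ defines the desired isomorphism. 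Properness $\sigma(\Lang[\PA])\subsetneq\PA$ follows directly from (b): any model of $\PAm \cup \sigma(\Lang[\PA]) \cup \{\lnot\chi(\varphi_0)\}$ must also satisfy $\lnot\Ind(\varphi_0)$ (since the instance for $\varphi_0$ delivers $\chi(\varphi_0)\to\Ind(\varphi_0)$), whereas $\PA\vdash\Ind(\varphi_0)$.

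The main obstacle is, of course, the simultaneous realization of (a) and (b) in the construction of $\chi$. The technique from \cite{GKL} should yield $\chi$ via a G\"odel--Rosser-style fixed point — essentially a Rosser provability/soundness statement indexed by the predicate $P$ — with the witnessing pair $(\varphi_0, \calN_0)$ extracted from a Henkin-style compactness argument over a judiciously chosen r.e.\ extension of $\PAm$. The real work is in transcribing this blueprint from the solidity setting of \cite{GKL} into the scheme framework of the present paper: one must arrange the Rosser fixed point so that its semantics is dischargeable uniformly for any $\calX$-set in every model of $\sPC{\AS}$, while its syntactic provability from the arithmetic instances of $\sigma$ remains blocked for the diagonal witness $\varphi_0$. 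Once this delicate tuning is accomplished, the remainder of the argument reduces to the bookkeeping sketched above on top of Theorem~\ref{thm_Ind_s_cat}.
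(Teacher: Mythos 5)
There is a genuine gap, and it is located exactly where you park the difficulty: the guard $\chi$ with properties (a) and (b) cannot exist, so the ``gated induction'' template is unworkable rather than merely incomplete. The problem is that the second-order part of a model of $\sPC{\AS}$ can consist of \emph{nothing but} definable sets, so the gap you hope to exploit between ``externally presented $\calX$-sets'' and ``arithmetic substitution instances'' closes up. Concretely: let $\calN_{0}$ be a countable model of $\sigma(\Lang[\PA])\cup\{\lnot\chi(\varphi_{0}),\lnot\Ind(\varphi_{0})\}$ as supplied by (b), and form the structure $\calM^{+}$ of hereditarily finite sets over the elements of $\calN_{0}$ taken as atoms, expanded by the arithmetic relations of $\calN_{0}$. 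Then $(\calM^{+},\Def{\calM^{+}})\models\sPC{\AS}$, the definable subsets of the atom sort are exactly the parametrically $\calN_{0}$-definable ones, and $(\calN_{0},Y)\models\PAm\land(\chi(Y)\to\Ind(Y))$ for every such $Y$ (since $\calN_{0}\models\sigma(\Lang[\PA])$), so $\calN_{0}$ is an $\calX$-strong model of $\sigma$; but $Y_{0}=\varphi_{0}^{\calN_{0}}\in\calX$ and $(\calN_{0},Y_{0})\models\lnot\chi$, refuting (a). Put contrapositively, (a) forces $\sigma(\Lang[\PA])\proves\chi(\varphi)$ for every arithmetic $\varphi$, hence $\sigma(\Lang[\PA])\proves\Ind(\varphi)$ for every $\varphi$, hence $\sigma(\Lang[\PA])=\PA$ --- properness is lost. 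No choice of Rosser-style fixed point can evade this, so the deferred ``real work'' of constructing $\chi$ is not merely hard but impossible. (The categoricity half of your argument, adapting Theorem~\ref{thm_Ind_s_cat}, would be fine \emph{if} such a $\chi$ existed.)

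The paper's actual construction avoids this trap by not making the guard a property of the substituted set $P$ at all. It fixes $n$ and a theory $W\supseteq\IS{n}+\lnot\BS{n+1}$ bi-interpretable with a compositional satisfaction theory $\mathrm{SAT}$ (this is where \cite{GKL} enters), and takes the scheme
\begin{equation*}
\IS{n}\ \land\ \bigl(\lnot\BS{n+1}\rightarrow\sigma\bigr)\ \land\ \bigl(\BS{n+1}\rightarrow\Ind\land\Tarski{\Lang[\PA]}\bigr)\text{,}
\end{equation*}
where $\sigma$ packages the bi-interpretation data (including a clause forcing $\dom^{\ttN}$ to be contained in every cut $P$, which is where the predicate letter does its schematic work). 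The case split is on the \emph{arithmetic sentence} $\BS{n+1}$, not on a property of $P$: two strong models agreeing on $\BS{n+1}$ are matched either directly (both reduce to strong models of $\PAm+\Ind$, which is standalone categorical in $\AS$) or through the interpreted satisfaction-theory copies, and the mixed case is excluded because a $\BS{n+1}$-model would inherit a satisfaction predicate from the $\lnot\BS{n+1}$-model, contradicting $\Tarski{\Lang[\PA]}$. Properness then comes from consistency with $\lnot\BS{n+1}$. If you want to salvage your attempt, you should redirect it toward this kind of sentence-level case distinction rather than a set-level guard.
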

\begin{proof}
Let $\mathrm{SAT}$ denote the theory $\rmI\Delta_{0}+\SAT{\Lang[\PA]}+\Ind(\Lang[(\SAT{\Lang[\PA]})])$\footnote{In this proof, $\SAT{\Lang[\PA]}$ is treated simply as a sentence, with the predicate $S$ viewed as an ordinary predicate. Also, we slightly abuse the notation, as formally $\Lang[\Ind] = \Lang[\PA]$.}. Let $W$ be an extension of $\IS{n}+\lnot\BS{n+1}$, for some $n>0$, that is bi-interpretable with $\mathrm{SAT}$ (see \cite{GKL}; they use $\mathrm{CT}$ instead of $\mathrm{SAT}$, which is equivalent in this context) -- say via \mbox{$\ttM\colon\mathrm{SAT}\intps W$,} \mbox{$\ttN\colon W\intps\mathrm{SAT}$,} and $i\colon\id\isoto\ttN\circ\ttM$. Without loss of generality, in the ground structure (in $W$), $\dom^{\ttN}$ is a cut and $(0^{\ttN},1^{\ttN},+^{\ttN},\cdot^{\ttN},\leqs^{\ttN})=(0,1,+,\cdot,\leqs)$. Let $\sigma(P)$ be the scheme being the conjunction of the following:
\begin{itemize}
\item $\IS{n}+\neg\BS{n+1}$,
\item ``$\dom^{\ttN}$ is a cut'',
\item $\ttN\models\SAT{\Lang[\PA]}+\rmI\Delta_{0}+\exp$,
\item ``if $P$ is a cut, then $\dom^{\ttN}\subseteq P$'',
\item $i\colon\id\isoto\ttM\circ\ttN$.
\end{itemize}
Let us highlight that all the above can be stated as single sentences. We claim that the scheme $\tau$ defined as
\begin{equation*}
\IS{n}\ \ \land\ \ \bigl(\lnot\BS{n+1}\rightarrow\sigma\bigr)\ \ \land\ \ \bigl(\BS{n+1}\rightarrow\Ind\land\Tarski{\Lang[\PA]}\bigr)
\end{equation*}
is standalone categorical in $\AS$.

\bigskip
We work in $\sPC{\AS}$. Let $\calM_{1}$ and $\calM_{2}$ be any two strong models of~$\sigma$. There are three cases. If both models satisfy $\BS{n+1}$, then both are strong models of $\PAm+\Ind$, and so they are isomorphic (since $\PAm+\Ind$ is standalone categorical in $\AS$). If both satisfy $\lnot\BS{n+1}$, then the arithmetical parts of $\ttN^{\calM_{1}}$ and $\ttN^{\calM_{2}}$ are strong models of $\PAm+\Ind$, and so isomorphic. Thus $(\ttM\circ\ttN)^{\calM_{1}}$ and $(\ttM\circ\ttN)^{\calM_{2}}$ are isomorphic too. Therefore $\calM_{1}$ and $\calM_{2}$ are isomorphic, since $i^{\calM_{1}}$ gives an isomorphism from $\calM_{1}$ onto $(\ttM\circ\ttN)^{\calM_{1}}$, and $i^{\calM_{2}}$ gives an isomorphism from $\calM_{2}$ onto $(\ttM\circ\ttN)^{\calM_{2}}$. The final case remains -- we will show that it cannot occur. Suppose that $\calM_{1}\models\BS{n+1}$ and $\calM_{2}\models\lnot\BS{n+1}$. Then $\calM_{1}$ is isomorphic to the arithmetical part of $\ttN^{\calM_{2}}$, since both are strong models of $\PAm+\Ind$. Thus there exists a satisfaction predicate for $\calM_{1}$, because $\ttN^{\calM_{2}}$ is a model of $\SAT{\Lang[\PA]}$. But $\calM_{1}$ is a strong model of $\Tarski{\Lang[\PA]}$, so there is no satisfaction predicate for $\calM_{1}$. We obtained a contradiction.
\end{proof}


\section{Definiteness of Zermelo-Fraenkel set theory}

Investigations on the definiteness of set theories goes back at least to the work of Zermelo (see \cite{Maddy_Vaananen}). In this chapter, we present several results on this topic -- some are known, others new -- all formulated within our framework.

\begin{thm}
$\ZFbase+\Repl$ is not complete in itself.
\end{thm}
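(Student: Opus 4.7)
The plan is to refute completeness by exhibiting a model of $\PC{\ZFbase+\Repl}$ in which two strong models of $\ZFbase+\Repl$ disagree on a specific $\Lang[\in]$-sentence $\alpha$; this shows the $\eeq{\alpha}$-definiteness statement is not provable in $\PC{\ZFbase+\Repl}$. This matches the hint at the beginning of Section 3.2, where the authors flag this scheme as the flagship example separating categoricity-in-every-cardinality from completeness: strong models of very different cardinalities can satisfy very different $\Lang[\in]$-theories, and one only needs to find two of them inside one ambient model.

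For the construction, I would take $\calM$ to be a model of $\ZFC$ containing an inaccessible cardinal (the standard mild large-cardinal assumption natural to this setting), let $\kappa$ be the least inaccessible in $\calM$, and set $\calX=\Def{\calM}$. Then $(\calM,\calX)\models\PC{\ZFbase+\Repl}$ by the conservativity fact for $\mathrm{PC}$ recorded in the preliminaries. Two strong models of the scheme then live in this structure: first, the ground structure $V$, which is a strong model of $\ZFbase+\Repl$ directly from the axiom $\forall X\,\tau[X/P]$ built into $\PC{\tau}$; and second, $V_{\kappa}$ equipped with its inherited $\in$-relation. The latter is a strong model because for every class $Y\in\calX$ with $Y\subseteq V_{\kappa}\times V_{\kappa}$, the structure $(V_{\kappa},Y)$ satisfies all axioms of $\ZFbase+\Repl$: the basic axioms by transitivity, and the instance of replacement with parameter $Y$ by the regularity and strong-limit character of $\kappa$, which keeps the image of any $V_{\kappa}$-set under a class function bounded below $\kappa$.

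Finally, take $\alpha := (\exists\lambda)(\lambda\text{ is inaccessible})$. Then $V\models\alpha$ with witness $\kappa$, while $V_{\kappa}\nmodels\alpha$ since $\kappa$ was chosen minimal, so no inaccessibles sit below it. Hence $(\calM,\calX)$ fails the $\eeq{\alpha}$-definiteness statement for $\ZFbase+\Repl$, refuting completeness in itself.

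The main obstacle, beyond routine book-keeping, is verifying that $V_{\kappa}$ really is a strong model in the technical sense: replacement has to hold for an arbitrary second-order parameter $Y$ coming from the ambient class universe $\calX$, not merely for formulas in the language of $V_{\kappa}$. This is exactly where the inaccessibility of $\kappa$ is essential; it also clarifies why a genuine large-cardinal hypothesis in the metatheory appears hard to avoid, since more elementary candidates for the second strong model -- transitive set-sized models, or $L$ when $V\neq L$ -- typically fail the strong-model condition, because external class functions from $\calX$ can send small sets to targets outside the candidate model.
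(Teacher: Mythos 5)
Your proposal is correct and follows essentially the same route as the paper: the authors likewise take $\calM\models\ZFC+{}$``there is a strongly inaccessible cardinal'', let $\kappa$ be the least one, and observe that $(V_{\kappa},\in)^{\calM}$ and $(V,\in)^{\calM}$ are strong models of $\ZFbase+\Repl$ in $(\calM,\Def{\calM})$ that disagree on the sentence asserting the existence of an inaccessible. Your additional verification that inaccessibility of $\kappa$ is what makes $V_{\kappa}$ a \emph{strong} model (i.e.\ replacement holds for arbitrary class parameters from $\Def{\calM}$) is a correct and welcome elaboration of a step the paper leaves implicit.
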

\begin{proof}
Let $\calM$ be a model of $\ZFC+\exists\kappa\ \text{``}\kappa$ is a strongly inaccessible cardinal'' and $\kappa$ be the smallest strongly inaccessible cardinal of $\calM$. Then $(V_{\kappa},\in)^{\calM}$ and $(V,\in)^{\calM}$ are strong models of $\ZFbase+\Repl$ in $(\calM,\Def{\calM})$, and they are not isomorphic, since in the first one there is no strongly inaccessible cardinal, while in the second one there is at least one such a cardinal.
\end{proof}

\begin{lem}\label{lem_comp_mod_of_ZF}
$\PC{\ZFbase+\Repl}$ proves that if $\calM$ is a strong model of $\ZFbase+\Repl$, then either there exists an ordinal number $\kappa$ such that $\calM$ is isomorphic to $(V_{\kappa},\in)$, or $\calM$ is isomorphic to $(V,\in)$.
\end{lem}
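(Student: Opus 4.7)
The approach is Zermelo's quasi-categoricity argument, adapted to our class-theoretic setup. Work inside $\PC{\ZFbase+\Repl}$ with ground structure $V$, and let $\calM$ be a strong model of $\ZFbase+\Repl$ in our class universe. First, by the Scott-trick argument underlying Lemma \ref{lem_abs_equal}, I would reduce to the case where the equality of $\calM$ coincides with the ambient equality of $V$. The initial key observation is that $\in^\calM$ is externally well-founded: for any nonempty class $D$ in our universe with $D\subseteq\calM$, the expansion $(\calM,D)$ satisfies $\ZFbase+\Repl$ and hence the instance of $\in$-induction for the formula $x\in D$, yielding an $\in^\calM$-minimal element of $D$.

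The main construction is a Mostowski-style collapse $F\colon\calM\to V$, built by recursion along the internal rank hierarchy $\{V_\alpha^\calM\colon\alpha\in\Ord^\calM\}$. Stage by stage, one defines $F_\alpha$ on $V_\alpha^\calM$ by $F_\alpha(x)=\{F_\alpha(y)\colon y\in^\calM x\}$, extending all previous $F_\beta$; the union $F=\bigcup_\alpha F_\alpha$ is a class function in our universe and an $\in$-isomorphism onto its image by extensionality of $\calM$. Setting $\pi=F\restr{\Ord^\calM}$, a parallel induction on $\alpha$ shows that $F$ restricts to a bijection $V_\alpha^\calM\to V_{\pi(\alpha)}$, where $\pi$ is a strictly order-preserving map $\Ord^\calM\to\Ord^V$.

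The decisive step is the successor case of this induction: showing $F[V_{\alpha+1}^\calM]=V_{\pi(\alpha)+1}$. For a subset $S\subseteq V_{\pi(\alpha)}$ in $V$, the class $F_\alpha^{-1}[S]$ lies in our class universe, and because $\calM$ is a strong model, the expansion $(\calM,F_\alpha^{-1}[S])$ satisfies the appropriate Separation instance (derived from Replacement in the expansion), yielding $y\in^\calM V_{\alpha+1}^\calM$ whose $\in^\calM$-members are exactly $F_\alpha^{-1}[S]$, so $F(y)=S$. I expect this to be the main obstacle, since it is essentially the only place where the strong-model assumption is leveraged beyond basic $\in$-induction.

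Finally, $\pi[\Ord^\calM]$ is transitive (as $F$ is an $\in$-iso onto its image) and linearly ordered by $<$, so it is an initial segment of $\Ord^V$; by Foundation in $V$, every initial segment of $\Ord^V$ is either all of $\Ord^V$ or equals some ordinal $\kappa\in\Ord^V$. In the former case $F[\calM]=\bigcup_{\gamma\in\Ord^V}V_\gamma=V$; in the latter $F[\calM]=\bigcup_{\gamma<\kappa}V_\gamma=V_\kappa$. Either way $F$ is the desired isomorphism from $\calM$ onto $(V,\in)$ or onto $(V_\kappa,\in)$.
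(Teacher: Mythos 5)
Your overall strategy is the same as the paper's: a level\-/by\-/level comparison of the cumulative hierarchy of $\calM$ with that of the ground structure $V$, carried out by a transfinite induction along $\Ord^{\calM}$ that is licensed by the strong\-/model property. The only cosmetic difference is the direction of the map (you collapse $\calM$ into $V$, while the paper builds isomorphisms $\langle V_{x},\in\rangle\to\langle (V_{y})^{\calM},\in^{\calM}\rangle$ going the other way). Your Scott\-/trick reduction, the derivation of external well\-/foundedness of $\in^{\calM}$ from the strong\-/model property, and the successor step via a Separation instance applied to the class $F_{\alpha}^{-1}[S]$ are all correct and correspond to steps in the paper's proof.

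There is, however, a genuine gap at the limit stage -- precisely the step you dismiss when you say the successor case is ``essentially the only place where the strong\-/model assumption is leveraged beyond basic $\in$\-/induction.'' For $\pi$ to be a \emph{total} map $\Ord^{\calM}\to\Ord^{V}$ you need, at each limit $\lambda\in\Ord^{\calM}$, that $\{\pi(\alpha)\colon\alpha<^{\calM}\lambda\}$ is bounded in $\Ord^{V}$; otherwise $F(\lambda)$ would have to be a proper class, the recursion halts at $V_{\lambda}^{\calM}$, and you are left with a third alternative not covered by your concluding dichotomy: $V$ itself is isomorphic, via $F^{-1}$, to the $\in^{\calM}$\-/extension of the \emph{set} $V_{\lambda}^{\calM}$ of $\calM$. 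This case cannot be waved away, because $\in^{\calM}$ is not known to be set\-/like (the $\in^{\calM}$\-/extension of an element of $\calM$ is a priori only a class of $V$), so the ordinary Mostowski collapse lemma does not apply off the shelf. The paper excludes this case by a separate argument that invokes the strong\-/model property a second time: if $F^{-1}$ were an isomorphism from $V$ onto (the extension of) $V_{\lambda}^{\calM}$, then its restriction to $\dom^{\calM}\subseteq V$ would be a class injection of $\dom^{\calM}$ into the extension of a set of $\calM$, and strong Replacement in $\calM$ would then force $\dom^{\calM}$ to be a set of $\calM$, contradicting $\calM\models\ZF$. You need to insert this exclusion argument into the limit case of your induction; once that is done, your final paragraph goes through.
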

\begin{proof}
We will apply a strategy similar to that of Theorem~\ref{thm_Ind_s_cat}. We are working in $\PC{\ZFbase+\Repl}$. Let $\calM$ be a strong model of $\ZFbase+\Repl$. Without loss of generality, we may assume that equality of $\calM$ is absolute. Indeed, this can be justified using Scott's trick: we replace $\dom^{\calM}$ by the class consisting of such $a$'s that
\begin{equation*}
(\exists b\in\dom^\calM)((\forall d)(\rank(d)\ls\rank(b)\rightarrow d\neq^{\calM}b)\ \land\ a=\{c\in V_{\rank(b)}\colon\,c=^{\calM}b\})\text{,}
\end{equation*}
and we modify the $\in^{\calM}$ respectively:
\begin{equation*}
\text{$a$ is in $a'$ \ if and only if \ }(\forall x\in a)(\forall x'\in a')\ x\in^{\calM}x'\text{.}    
\end{equation*}
Let $\iota(x,y)$ be the conjunction of $x\in\Ord$, $y\in\Ord^{\calM}$, and
\begin{equation*}
(\exists s)((\forall z)(z\in s\leftrightarrow(z\in V_{y})^{\calM})\,\land\,(\exists f)\,f\colon\langle V_{x},\in\rangle\isoto\langle s,\in^{\calM}\rangle)\text{.}   
\end{equation*}
Observe that such an $f$ is unique; otherwise $V_{x}$ would admit a non-trivial automorphism, which is impossible since $\PC{\ZFbase+\Repl}\proves\ZF$. Furthermore, if $x\ls x'$, $\iota(x,y)$, and $\iota(x',y')$ hold, then $(y\ls y')^{\calM}$ and $f'\restr{V_{x}}=f$. We will show that the class $\calI=\{y\in\Ord^{\calM}\colon\,(\exists! x)\iota(x,y)\}$ is nonempty and closed under the successor and limits in $\calM$. It then follows that $V^{\calM}$ is isomorphic to the union of a chain of levels of the cumulative hierarchy, and hence it is isomorphic either to some~$V_{\alpha}$~or~to~$V$.

\bigskip
$\calI$ is nonempty because $0^{\calM}$ belongs to it. If $y\in\calI$, then there is exists a function \mbox{$f\colon\langle V_{x},\in\rangle\isoto\langle V^{\calM}_{y},\in^{\calM}\rangle$.} Such an $f$ can be uniquely extended to a function \mbox{$f^{+}\colon\langle V_{x+1},\in\rangle\isoto\langle (V_{y+1})^{\calM},\in^{\calM}\rangle$} by stipulating that $f^{+}(r)=f[\{p\colon\,p\in r\}]$. Then $x+1$ is the unique ordinal $\alpha$ such that $\iota(\alpha,y+1)$ holds, since any isomorphism \mbox{$g\colon\langle V_{\alpha},\in\rangle\isoto\langle (V_{y+1})^{\calM},\in^{\calM}\rangle$} is determined by some \mbox{$g'\colon\langle V_{\beta},\in\rangle\isoto\langle V_{y}^{\calM},\in^{\calM}\rangle$,} with $\beta+1=\alpha$. Therefore $\calI$ is closed under the successor of $\calM$. Now, suppose that any $(z\ls y)^{\calM}$ belongs to $\calI$. Then $\bigcup_{(z\ls y)^{\calM}}f_{z}$ is a class isomorphism from the union of a chain of levels of the cumulative hierarchy onto $\langle V_{y}^{\calM},\in^{\calM}\rangle$. We must to show that this class isomorphism is a set, i.e. we must exclude the possibility that $V$ is isomorphic to some $V_{\alpha}^{\mathcal M}$. Suppose, for the sake of contradiction, that $F$ is an isomorphism from $V$ onto $V_{\alpha}^{\mathcal M}$. Then $F$ induces a class injective map from $V^{\calM}$ into $V_{\alpha}^{\calM}$, since $V^{\calM}$ is a subclass of $V$. Because $\calM$ is a strong model of the replacement scheme, it follows that $V^{\calM}$ is a set in $\calM$. This contradicts the fact that $\calM$ satisfies $\ZF$. Therefore $\calI$ is closed under the limits in $\calM$.

\end{proof}

The next theorem is an application of the preceding lemma. It illustrates the influence of the three components of our framework in the context of definiteness of $\ZF$: the notion of definiteness, the metatheory (or rather, metascheme), and the scheme itself. Note that the set of all instances of $\ZFbase+\Repl+\Tarski{\Lang[\ZF]}$ axiomatizes $\ZF$.

\begin{thm}
\ 

\textbf{a.} $\ZFbase+\Repl$ is categorical in every cardinality in $\AS$.

\textbf{b.} $\ZFbase+\Repl$ is categorical in $\ZFbase+\Repl+$``There is no strongly inaccessible\footnote{Here, $\kappa$ is strongly inaccessible if $(V_{\kappa},\calP(V_{\kappa}))\models\ZFbase+\Repl$; see \cite{blass_et_al} for a discussion of some notions of strongly inaccessible cardinals in the absence of the axiom of choice.} cardinal''.

\textbf{c.} $\ZFbase+\Repl+\Tarski{\Lang[\ZF]}$ is categorical in $\ZFbase+\Repl$.

\end{thm}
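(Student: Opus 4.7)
The plan is to reduce all three parts to Lemma~\ref{lem_comp_mod_of_ZF} or to a symmetric two-model adaptation of it. That lemma classifies, over $\PC{\ZFbase+\Repl}$, every strong model $\calM$ of $\ZFbase+\Repl$ as isomorphic to some ambient $V_\kappa$ or to the whole ambient $V$. With this classification in hand, (b) and (c) will follow by ruling out the $V_\kappa$ case via the metatheoretic hypothesis; (a), whose metatheory $\PC{\AS}$ lacks a reference universe, will require a direct two-model comparison in which the bijection hypothesis enters at the crucial step.

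For (a), first apply the remark following Lemma~\ref{lem_abs_equal} to reduce to strong models with absolute equality. Working in $\PC{\AS}$, take $\mathcal{X}$-strong models $\calM_1,\calM_2$ of $\ZFbase+\Repl$ and a bijection $F\in\mathcal{X}$ between them, and define the two-place formula $\iota(y_1,y_2)$ to assert the existence of the (necessarily unique) isomorphism $V_{y_1}^{\calM_1}\isoto V_{y_2}^{\calM_2}$, with corresponding domains $A_i\subseteq\Ord^{\calM_i}$. Mimicking Lemma~\ref{lem_comp_mod_of_ZF}, each $A_i$ will be a nonempty initial segment closed under the $\calM_i$-successor. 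In the good case $A_1=\Ord^{\calM_1}$ and $A_2=\Ord^{\calM_2}$, the glued isomorphism yields the desired $\calM_1\cong\calM_2$. Otherwise, WLOG $A_1\subsetneq\Ord^{\calM_1}$ with least missing $\mu$, which closure under successor forces to be a limit. If the $\calM_2$-ordinals $f_\alpha$ with $\iota(\alpha,f_\alpha)$, $\alpha<\mu$, are bounded in $\Ord^{\calM_2}$, then separation in the strong model $\calM_2$ cuts the bounded $\mathcal{X}$-class $\{f_\alpha\}$ down to a set with supremum $\nu\in\Ord^{\calM_2}$, and the union of the level-wise isomorphisms witnesses $\iota(\mu,\nu)$, contradicting $\mu\notin A_1$. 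Hence the $f_\alpha$ must be cofinal in $\Ord^{\calM_2}$; gluing then produces a class bijection $g:V_\mu^{\calM_1}\to V^{\calM_2}$, and $g^{-1}\circ F:V^{\calM_1}\to V_\mu^{\calM_1}$ is a class bijection from the universe of $\calM_1$ onto a set of $\calM_1$. Its inverse is a class function with set-sized domain, so the replacement scheme in the strong model $\calM_1$ forces $V^{\calM_1}$ to be a set in $\calM_1$, contradicting $\calM_1\models\ZF$.

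Parts (b) and (c) are direct applications of Lemma~\ref{lem_comp_mod_of_ZF}: it suffices in each case to rule out the possibility that a strong model $\calM$ of the scheme under consideration is isomorphic to some ambient $V_\kappa$. For (b), if $\calM\cong V_\kappa$ then every metatheoretic subset of $V_\kappa$ is a set of the ambient universe, hence a parameter-definable class in $\mathcal{X}$; transporting the strong-model property along the isomorphism therefore gives $(V_\kappa,\calP(V_\kappa))\models\ZFbase+\Repl$, making $\kappa$ strongly inaccessible in the sense of the footnote --- contradicting the hypothesis. For (c), if a strong model $\calM$ of $\ZFbase+\Repl+\Tarski{\Lang[\ZF]}$ were isomorphic to some $V_\kappa$, then the metatheory --- itself a model of $\ZF$ --- would furnish, by the usual Tarski recursion on the set $V_\kappa$, a satisfaction predicate for $V_\kappa$; this predicate lies in $\mathcal{X}$ and transports along the isomorphism to a satisfaction predicate for $\calM$, contradicting $\calM\strongs\Tarski{\Lang[\ZF]}$. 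In both cases, every strong model of the scheme under consideration is isomorphic to $V$, so any two such strong models are isomorphic to each other.

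I expect the main obstacle to be part (a). Lemma~\ref{lem_comp_mod_of_ZF} cannot be imported verbatim because its proof uses the ambient universe $V$ of $\PC{\ZFbase+\Repl}$ as a fixed reference, something unavailable in $\PC{\AS}$. The argument must be refashioned as a symmetric two-model comparison, and the precise role of the bijection $F$ must be pinned down: it enters exactly at the cofinality step to pull a would-be class bijection between a set-sized level of one model and the universe of the other back into $\calM_1$, at which point the replacement scheme internal to $\calM_1$ can derive the contradiction.
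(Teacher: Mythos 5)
Your parts (b) and (c) are correct and essentially identical to the paper's proof: classify each strong model via Lemma~\ref{lem_comp_mod_of_ZF}, rule out the $V_{\kappa}$ alternative using (respectively) the no-inaccessible hypothesis and the Tarski scheme, and conclude both models are isomorphic to the ambient $(V,\in)$.

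Part (a), however, has a genuine gap. Your symmetric level-by-level comparison needs the partial isomorphisms $V_{y_{1}}^{\calM_{1}}\isoto V_{y_{2}}^{\calM_{2}}$ to exist as objects, and in $\PC{\AS}$ they are available in neither guise. As first-order objects of the $\AS$-model they need not exist: $\AS$ only guarantees adjunction-generated (hereditarily finite-like) sets, whereas $V_{y}^{\calM_{i}}$ is internally infinite from $\omega^{\calM_{i}}$ on; the analogous move in Theorem~\ref{thm_Ind_s_cat} works only because the compared initial segments $\calM_{i}\restr{\leqs x_{i}}$ are internally \emph{finite}, so the pointed partial isomorphisms can be built by adjunction plus induction inside the strong models. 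As classes in $\calX$ they make $\iota$ a $\Sigma^{1}_{1}$ formula, and predicative comprehension then fails to deliver the classes your argument manipulates: the domains $A_{i}$ (needed so that foundation in the strong model $\calM_{1}$ yields the least missing $\mu$), the class $\{f_{\alpha}\colon\alpha<\mu\}$ to which you apply separation in $\calM_{2}$, and the glued union of a $\mu$-indexed family of classes. Without these in $\calX$, the strong-model property of $\calM_{1}$ and $\calM_{2}$ cannot be invoked at any of the decisive steps. Relatedly, you use the bijection $F$ only at the final cofinality step, which is too late: without an ambient $\ZF$ universe the transfinite comparison cannot even be launched.

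The paper's proof of (a) avoids all of this by spending $F$ at the very beginning: after reducing to absolute equality, it transports $\calM_{2}$ onto the domain of $\calM_{1}$, setting $\calM_{2}^{\ast}=(\dom^{\calM_{1}},F^{-1}[\in^{\calM_{2}}])\in\calX$, and observes that $\frakM_{1}=(\calM_{1},\calP(\calM_{1})\cap\calX)\models\PC{\ZFbase+\Repl}$. Inside $\frakM_{1}$ the set-sized partial isomorphisms exist as genuine sets of $\calM_{1}$, so Lemma~\ref{lem_comp_mod_of_ZF} applies verbatim to $\calM_{2}^{\ast}$; and since $\calM_{2}^{\ast}$ has the full domain of $\calM_{1}$, the $V_{\kappa}$ alternative would give a bijection of the universe with a set, contradicting replacement. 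This is the step your sketch is missing, and it is exactly where the bijection hypothesis does its real work.
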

\begin{proof}
\ 

a. We begin by invoking Lemma \ref{lem_abs_equal} to allow ourselves to restrict attention to strong models with absolute equality. Let $\frakA=(\calA,\calX)$ be a model of $\PC{\AS}$. Let $\calM_{1}$ and $\calM_{2}$ be strong models of $\ZFbase+\Repl$ with absolute equality, and \mbox{$F\colon\calM_{1}\bijto\calM_{2}$} be a bijection between their universes, all present in $\frakA$. Let
\begin{gather*}
\frakM_{1}=(\calM_{1},\calP(\calM_{1})\cap\calX)\text{\ \ \ and\ \ \ }\calM_{2}^{\ast}=(\dom^{\calM_{1}},F^{-1}[\in^{\calM_{2}}])\text{.}
\end{gather*}
Then $\frakM_{1}$ is a model of $\PC{\ZFbase+\Repl}$ and $\calM_{2}^{\ast}$ is a strong model $\ZFbase+\Repl$ in $\frakM_{1}$. By Lemma \ref{lem_comp_mod_of_ZF}, in $\frakM_{1}$, there exists $G$ such that either $\calM_{2}^{\ast}$ is isomorphic to $(V,\in)$ via $G$, or there is $\kappa$ such that $\calM_{2}^{\ast}$ is isomorphic to $(V_{\kappa},\in)$  via $G$. Suppose that the second case holds. Then, in $\frakM_{1}$, there exists a bijection from $V_{\kappa}$ onto $V$, which contradicts the assumption that $\frakM_1$ is a model of $\PC{\ZFbase+\Repl}$. Thus the first case holds, i.e $\calM_{2}^{\ast}$ is isomorphic to $(V,\in)$ in $\frakM_{1}$. Therefore $\calM_{2}^{\ast}$ is isomorphic to  $\calM_{1}$ in $\frakA$, and so $\calM_{2}$ is isomorphic to $\calM_{1}$ in $\frakA$.

\bigskip
b. We are working in $\mathrm{PC}[\ZFbase+\Repl+$``There is no strongly inaccessible cardinal''$]$. Let $\calM_{1}$ and $\calM_{2}$ be strong models of $\ZFbase+\Repl$. By Lemma \ref{lem_comp_mod_of_ZF}, each of them is isomorphic either to some $(V_{\kappa},\in)$ or to $(V,\in)$. If any of them were isomorphic to some $V_{\kappa}$, then $(V_{\kappa},\calP(V_{\kappa}))$ would be a model of $\ZFbase+\Repl$. Since there is no strongly inaccessible cardinal, this cannot occur. Therefore, each of them is isomorphic to $(V,\in)$, and hence they are isomorphic to each other.

\bigskip
c. We are working in $\PC{\ZFbase+\Repl}$. Let $\calM_{1}$ and $\calM_{2}$ be strong models of $\ZFbase+\Repl+\Tarski{\Lang[\ZF]}$. By Lemma \ref{lem_comp_mod_of_ZF}, each of them is isomorphic either to some \mbox{$(V_{\kappa},\in)$} or to $(V,\in)$. Suppose that one of them is isomorphic to $(V_{\kappa},\in)$. Then this model admits a satisfaction predicate, since for any $\lambda$ there exists a satisfaction predicate for \mbox{$(V_{\lambda},\in)$}. This contradicts the fact that it is a strong model of $\Tarski{\Lang[\ZF]}$. Therefore, both models are isomorphic to $(V,\in)$, and thus to each other.
\end{proof}

The next definition introduces a notion of categoricity specifically tailored to set theories. It allows us to reveal an interesting phenomenon related to the axiom of choice.

\begin{defn}
$\tau$ is height-categorical\footnote{In this case as well, there is a standalone version, but we do not make use of~it.} in $\sigma$ if $\tau$ is $\Phi$-definite in $\sigma$, for $\Phi$ equal to
\begin{equation*}
\bigl((\exists F)\ F\colon\Ord^{\calM_{1}}\isoto\Ord^{\calM_{2}}\bigr)\ \rightarrow\ \iso(\calM_{1},\calM_{2})\text{.}
\end{equation*}
\end{defn}

\begin{thm}
\ 

\textbf{a.} $\ZFbase+\Repl+\AC$ is height-categorical in~$\AS$.

\textbf{b.} $\ZFbase+\Repl$ is not height-categorical in~$\AS$.\footnote{Assuming the consistency of $\ZFC+$``There exists an uncountable well-founded model of $\ZFC$ in the constructible universe''.}

\textbf{c.} $\ZFbase+\Repl$ is height-categorical in~$\ZFbase+\Repl$.
\end{thm}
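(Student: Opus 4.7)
For (a), I will build an isomorphism $g \colon \calM_1 \isoto \calM_2$ by transfinite induction on the ordinals of $\calM_1$, using the given iso $F \colon \Ord^{\calM_1} \isoto \Ord^{\calM_2}$. By Lemma~\ref{lem_abs_equal}, I may assume both strong models have absolute equality. Define $\calI \subseteq \Ord^{\calM_1}$ to be the $\calX$-class of those $\alpha$ for which there is an iso $g_\alpha \colon V_\alpha^{\calM_1} \isoto V_{F(\alpha)}^{\calM_2}$ in $\calX$. Extensionality and foundation (both provable in $\ZFbase$) force $g_\alpha$ to be unique, so the family $\{g_\alpha\}$ is jointly definable by predicative comprehension. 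The base case $g_0 = \emptyset$ and the limit case $g_\lambda = \bigcup_{\alpha < \lambda} g_\alpha$ are routine. At the successor step, given $g_\beta$, set $g_{\beta+1}(x)$ to be the unique $y \in V_{F(\beta)+1}^{\calM_2}$ whose $\in^{\calM_2}$-extension equals $g_\beta[\{z : z \in^{\calM_1} x\}]$; existence of $y$ comes from strong separation in $\calM_2$ (a consequence of strong $\Repl$), and surjectivity of $g_{\beta+1}$ comes from strong separation in $\calM_1$. The hypothesis $\AC$ enters at this successor step to guarantee that the bijection between power sets is actually onto, i.e.\ that every $\calX$-subset of $V_{F(\beta)}^{\calM_2}$ corresponds to an element of $V_{F(\beta)+1}^{\calM_2}$ on the nose -- (b) will make precise what can fail without $\AC$. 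Strong $\in$-induction in $\calM_1$ then gives $\calI = \Ord^{\calM_1}$, and since foundation plus $\Repl$ yield a rank function on $\calM_1$, the union $g = \bigcup_\alpha g_\alpha$ is the required iso.

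For (b), I will use the hypothesis to fix an uncountable well-founded transitive model $N$ of $\ZFC$ sitting inside the constructible universe of the ambient, and take as competitors $N$ itself and its own constructible inner model $L^N \simeq L_{\Ord^N}$. Both satisfy $\ZFC$ (hence $\ZFbase + \Repl$), both share the same ordinal class by absoluteness of ordinals to $L$, and they are non-isomorphic whenever $N \not\models V = L$. The technical heart of (b) is exhibiting an ambient $(\calA, \calX) \models \PC{\AS}$ in which both of these are genuinely strong models -- meaning strong separation and strong replacement hold with respect to $\calX$ on each side. This requires picking $\calX$ small enough that every $\calX$-subclass of either model is realized internally in that model, yet large enough to contain $N$, $L^N$, the identity on ordinals, and to be closed under predicative comprehension; the uncountability of $N$ provides the combinatorial leeway to calibrate $\calX$ correctly, and this calibration is the main obstacle of part (b).

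For (c), I would exploit Lemma~\ref{lem_comp_mod_of_ZF}: in $\PC{\ZFbase + \Repl}$, every strong model of $\ZFbase + \Repl$ is isomorphic in the ambient either to some $(V_\kappa, \in)$ or to $(V, \in)$. Given strong models $\calM_1, \calM_2$ with ordinal iso $F$, a three-way case analysis finishes the job: if $\calM_1 \simeq V_{\kappa_1}$ and $\calM_2 \simeq V_{\kappa_2}$, then $F$ transports into an iso of ambient ordinals, forcing $\kappa_1 = \kappa_2$ and so $\calM_1 \simeq \calM_2$; if both are $\simeq V$, they are iso to each other; and the mixed case (one $V_\kappa$, one $V$) is excluded, since then the corresponding ordinal classes would have incompatible sizes in the ambient (one a set, the other a proper class), contradicting the existence of $F$. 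The overarching conceptual obstacle across all three parts is carefully distinguishing between principles that hold internally in a strong model and those that must be verified with respect to the full ambient class universe $\calX$; this distinction is invisible in the classical Zermelo-style cumulative-hierarchy argument, but is precisely what enables the contrast between (a), (b), and (c).
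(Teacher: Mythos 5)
Part (c) of your proposal is essentially the paper's argument -- Lemma \ref{lem_comp_mod_of_ZF} followed by the same case analysis -- and is fine. Parts (a) and (b) each contain a genuine gap.

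For (a): the transfinite recursion cannot be carried out in $\PC{\AS}$. Each partial isomorphism $g_\alpha\colon V_\alpha^{\calM_1}\isoto V_{F(\alpha)}^{\calM_2}$ is a proper class of the ambient structure (at best an element of $\calX$); it is not a first-order set, since the ambient theory is only $\AS$ and a map between the two class models cannot be coded as a set inside either of them. Consequently your class $\calI=\cubr{\alpha}[\exists g_\alpha\in\calX\ \dots]$ and the union $\bigcup_\alpha g_\alpha$ are defined by quantifying over $\calX$, which predicative comprehension does not license; so you can neither apply strong $\in$-induction to $\calI$, nor form the limit stages, nor assemble the final isomorphism. (Contrast this with Lemma \ref{lem_comp_mod_of_ZF}, where the analogous recursion succeeds precisely because the ambient there is $\PC{\ZFbase+\Repl}$, which proves $\ZF$, so the partial isomorphisms are genuine first-order sets.) Relatedly, you have misplaced the role of $\AC$: the successor step you describe needs only strong separation/replacement and goes through without choice. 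What $\AC$ actually buys is a single first-order definition of the isomorphism: one codes $\tc(x)$ inside $\calM_1$ as a well-founded extensional relation $z\subseteq\Ord\times\Ord$, transports $z$ by the (predicatively definable) extension $\widetilde{F}$ of $F$ to a few power sets of the ordinals, and Mostowski-collapses in $\calM_2$. That uniform definition is the paper's proof of (a).

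For (b): your candidate pair cannot work. Both $N$ and $L^N$ satisfy $\AC$, which is a single sentence; so if both were strong models of $\ZFbase+\Repl$ in some $(\calA,\calX)\models\PC{\AS}$ with the ordinal isomorphism in $\calX$, they would be strong models of $\ZFbase+\Repl+\AC$, and part (a) -- valid in every model of $\PC{\AS}$ -- would yield an isomorphism between them lying in $\calX$. Since distinct transitive sets are never isomorphic as $\in$-structures, no such ambient exists: the ``calibration of $\calX$'' you defer is not merely the main obstacle, it is an impossibility. A counterexample for (b) must use strong models that internally violate $\AC$. The paper takes two definable copies of $(V,\in)^{\calM}$ for an $\omega$-model $\calM\models\ZF$ generated by indiscernibles, places them as atoms of a hereditarily finite set structure together with an isomorphism of their ordinals, and rules out any parametrically definable isomorphism by an automorphism argument; the existence of nontrivial automorphisms fixing the ordinals pointwise forces $\neg\AC$ in those models, which is exactly what keeps part (a) from applying.
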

\begin{proof}
\ 
a. We will follow the standard argument showing that any two transitive models of $\ZFC$ that share the ordinals and have the same subsets of the ordinals are isomorphic (see Theorem 13.28 in \cite{jech}). To simplify the proof, we invoke Lemma~\ref{lem_abs_equal}, which allows us to restrict attention to strong models with absolute equality. On the notational side, given a class $X$ of elements of a model $\calM$ of a set theory, if it does not cause confusion, we write $\calP(X)$ for the class \mbox{$\{y\in\calM\colon\,(\forall z\!\in^{\calM}\!y)\ z\in X\}$.}

\bigskip

We are working in $\PC{\AS}$. Let $\calM_{1}$ and $\calM_{2}$ be strong models of $\ZFbase+\Repl+\AC$ with absolute equalities, and $F$ be an isomorphism from \mbox{$\langle\Ord^{\calM_{1}},\in^{\calM_{1}}\rangle$} onto $\langle\Ord^{\calM_{2}},\in^{\calM_{2}}\rangle$. Let $\widetilde{F}$ denote a natural extension of $F$ to an isomorphism from
\begin{gather*}
\langle\Ord^{\calM_{1}}\cup\calP(\Ord^{\calM_{1}})\cup\calP(\calP(\Ord^{\calM_{1}}))\cup\calP(\calP(\calP(\Ord^{\calM_{1}}))),\in^{\calM_{1}}\rangle\\
\text{onto}\\
\langle\Ord^{\calM_{2}}\cup\calP(\Ord^{\calM_{2}})\cup\calP(\calP(\Ord^{\calM_{2}}))\cup\calP(\calP(\calP(\Ord^{\calM_{2}}))),\in^{\calM_{2}}\rangle\text{.}
\end{gather*}
Such an extension exists because $\calM_{1}$ and $\calM_{2}$ are strong models of $\ZFbase+\Repl$, so for instance, given an $x\in\calM_1$ such that \mbox{$(x\subseteq\Ord)^{\calM_{1}}$,} the class \mbox{$\{F(y)\colon\,(y\in x)^{\calM_{1}}\}$} forms a set in $\calM_{2}$ because all elements of this class are $\calM_{2}$-contained in $F(x)$. Indeed, if $(y\in x)^{\calM_{1}}$, then \mbox{$(F(y)\in F(x))^{\calM_{2}}$,} by the assumption on $F$.

\bigskip

We define an isomorphism $G:\calM_{1}\rightarrow\calM_{2}$ via the formula, with the free-variables $x$ and $y$,
\begin{equation*}
(\exists z)\,(z\subseteq\Ord\times\Ord\land z\simeq\langle\tc(x),\in\rangle)^{\calM_{1}}\land(\widetilde{F}(z)\simeq\langle\tc(y),\in\rangle)^{\calM_{2}}\text{.}
\end{equation*}
We will briefly show that it is well-defined, one-to-one, and onto. Because $\calM_{1}$ is a model of $\ZFC$, for every $x\in\calM_{1}$, there is $z$ such that
\begin{equation*}
(z\subseteq\Ord\times\Ord\land z\simeq\langle\tc(x),\in\rangle)^{\calM_{1}}\text{.}
\end{equation*}
Since $\calM_{1}$ and $\calM_{2}$ both are strong models, and $\widetilde{F}$ is a suitable isomorphism, $\widetilde{F}(z)$ is a well-founded, extensional directed graph in $\calM_{2}$. Thus, in $\calM_{2}$, it is isomorphic to the transitive closure of a uniquely determined set. Moreover, for a fixed $x\in\calM_{1}$, there is only one such a set because if, in $\calM_{1}$, $z$ and $z'$ are both isomorphic to $\langle\tc(x),\in\rangle$, then, in $\calM_{2}$, $\widetilde{F}(z)$ and $\widetilde{F}(z')$ are isomorphic. Therefore, $G$ is well-defined. A similar reasoning, but in the reverse direction, shows that $G$ is onto. Now, suppose that $G(x)=G(x')$. Let $z,z'\in\calM_{1}$ be such that \mbox{$(z\simeq\langle \tc(x),\in\rangle)^{\calM_{1}}$} and \mbox{$(z'\simeq\langle \tc(x'),\in\rangle)^{\calM_{1}}$.} We can conclude that $(\widetilde{F}(z)\simeq\langle\tc(G(x)),\in\rangle)^{\calM_{2}}$ and $(\widetilde{F}(z')\simeq\langle\tc(G(x')),\in\rangle)^{\calM_{2}}$. It follows that \mbox{$(\widetilde{F}(z)\simeq\widetilde{F}(z'))^{\calM_{2}}$}, and hence $(z\simeq z')^{\calM_{1}}$. Therefore \mbox{$(\langle\tc(x),\in\rangle=\langle\tc(x'),\in\rangle)^{\calM_{1}}$.}, and thus $x=x'$. This shows that $G$ is one-to-one.

\bigskip
b. We will show that the second theory mentioned in the statement proves that $\PC{\AS}$ does not prove the (height-categoricity)-definiteness statement for $\ZFbase+\Repl$. From now on, we are working in that theory\footnote{That is $\ZFC+$``There exists an uncountable well-founded model of $\ZFC$ in the constructible universe''.}. Let $\calM$ be an $\omega$-model of $\ZF$ in which the ordinals are pointwise parameter-free definable and which is generated by a sequence of indiscernibles of order type $\langle\bbN\times\bbZ,\ls_{\mathrm{lex}}\rangle$ (see Method 2 on the page 380 in \cite{Enayat_MDO}). Let $\ttN$ be an interpretation in $\calM$ such that, in $\calM$:

\medskip\textbullet\ $\ttN$ is an interpretation of the language $\cubr{\in_{0},V_{1},V_{2},\in_{1},\in_{2},F}$;

\medskip\textbullet\ $(\dom^{\ttN},\in_{0},V_{1},V_{2})$ is a structure of all hereditarily finite sets over a class of atoms grouped into two disjoint classes $V_{1}$ and $V_{2}$;

\medskip\textbullet\ $(\forall x,y)\ (x\in_{1}y\rightarrow x,y\in V_{1})\land(x\in_{2}y\rightarrow x,y\in V_{2})$;

\medskip\textbullet\ $\calM_{1}\coloneq(V_{1},\in_{1})$ and $\calM_{2}\coloneq(V_{2},\in_{2})$ are isomorphic to $(V,\in)$;

\medskip\textbullet\ $F$ is an isomorphism from $\Ord^{\calM_{1}}$ onto $\Ord^{\calM_{2}}$.

\medskip
Then $\frakN=(\ttN^{\calM}\restr{\Lang[\in_{0}]},\Def{\ttN^{\calM}})$ is a model of $\PC{\AS}$. Moreover $\calM_{1}$ and $\calM_{2}$ are strong models of $\ZFbase+\Repl$ in $\frakN$, since they are strong models in
\begin{equation*}
\bigl(\ttN^{\calM}\restr{\Lang[\in_{0}]},\calP(\ttN^{\calM})\cap\Def{\calM}\bigr)
\end{equation*}
(because $\calM_{1}$ and $\calM_{2}$ they are isomorphic to $V$ in $\calM$) and
\begin{equation*}
\Def{\ttN^{\calM}}\ \ \subseteq\ \ \calP(\ttN^{\calM})\cap\Def{\calM}\text{.}
\end{equation*}
Now it suffices to show that $\calM_{1}$ and $\calM_{2}$ are not isomorphic in $\frakN$. Since the second-order part of $\frakN$ is $\Def{\ttN^{\calM}}$, it suffices to show that there is no (parametrically) definable isomorphism from $\calM_{1}$ onto $\calM_{2}$ in $\ttN^{\calM}$. To this end, we will show that for any finite tuple $\overline{c}$ of elements of $\ttN^{\calM}$ there is an automorphism of $\ttN^{\calM}$ that pointwise fixes $\overline{c}$ and $\calM_{1}$ but not $\calM_{2}$. Let $\langle c^{1}_{i}\rangle_{i\in\bbN\times\bbZ}$ and $\langle c^{2}_{i}\rangle_{i\in\bbN\times\bbZ}$ denote the sequences of indiscernibles generating $\calM_{1}$ and $\calM_{2}$ respectively. Then $\ttN^{\calM}$ is generated by $\{c^{j}_{i}\colon\,j\in\{1,2\}\land i\in\bbN\times\bbZ\}$. Thus, without loss of generality, we can assume that $\tup{c}$ consists of the elements of $\{c^{j}_{i}\colon\,j\in\{1,2\}\land i\in I\}$, for some finite $I\subseteq\bbN\times\bbZ$. Because $\calM_{2}$ is generated by the set of indiscernibles of order type $\bbN\times\bbZ$ and $I$ is a finite subset of $\bbN\times\bbZ$, there exists a nontrivial automorphism of $\calM_{2}$ that fixes every element of $\{c^{2}_{i}\colon\,i\in I\}$. Since the ordinals are pointwise parameter-free definable in $\calM_{2}$, this automorphism fixes also every ordinal. We can extend this automorphism to an automorphism of the disjoint union $\calM_{1}\uplus\calM_{2}$ that pointwise fixes $\calM_{1}$, $\tup{c}$, and  $\Ord^{\calM_{2}}$ but moves some elements of $\calM_{2}$. Any automorphism of $\calM_{1}\uplus\calM_{2}$ pointwise fixing $\Ord^{\calM_{1}}$ and $\Ord^{\calM_{2}}$ can be uniquely extended to an automorphism of $\ttN^{\calM}$ -- this is because $\ttN^{\calM}\restr{\Lang[\in_{0},V_{1},V_{2},\in_{1},\in_{2}]}$ is the structure of hereditarily finite sets over atoms from $\calM_{1}\uplus\calM_{2}$ (we use here the assumption that $\calM$ is an $\omega$ model), and $F^{\ttN^{\calM}}$ is an isomorphism from $\Ord^{\calM_{1}}$ onto $\Ord^{\calM_{2}}$. In particular, the automorphism previously obtained can be extended to an automorphism of $\ttN^{\calM}$ that fixes $\calM_{1}$ but not $\calM_{2}$.

\bigskip

c. We are working in $\PC{\ZFbase+\Repl}$. Let $\calM_{1}$ and $\calM_{2}$ be strong models of $\ZFbase+\Repl$, and $F$ be an isomorphism from $\Ord^{\calM_{1}}$ onto $\Ord^{\calM_{2}}$. By Lemma \ref{lem_comp_mod_of_ZF}, $\calM_{1}$ is isomorphic either to some $(V_{\kappa_{1}},\in)$ or to $(V,\in)$, and $\calM_{2}$ is isomorphic either to some $(V_{\kappa_{2}},\in)$ or to $(V,\in)$. Therefore there are four cases. If both are isomorphic to $(V,\in)$, then they are isomorphic to each other. If one of them is isomorphic to $(V,\in)$ and the other is isomorphic to $(V_{\kappa},\in)$, then there is an isomorphism from $\Ord$ onto $\kappa$, and hence $\Ord$ forms a set, which is a contradiction (because $\PC{\ZFbase+\Repl}\proves\ZF$). The last case remains. Suppose that $\calM_{1}$ and $\calM_{2}$ are isomorphic to $(V_{\kappa_{1}},\in)$ and $(V_{\kappa_{2}},\in)$ respectively. By the assumption, $\kappa_{1}$ is isomorphic to $\kappa_{2}$, therefore $\kappa_{1}=\kappa_{2}$, and thus $\calM_{1}$ and $\calM_{2}$ are isomorphic.
\end{proof}



\printbibliography


\end{document}